\newcommand{\textcyr}[1]{%
 {\fontencoding{OT2}\fontfamily{wncyr}\fontseries{m}\fontshape{n}\selectfont #1}}
\def\easycyrsymbol#1{\mathord{\mathchoice
  {\mbox{\fontsize\tf@size\z@\usefont{T2A}{\rmdefault}{m}{n}#1}}
  {\mbox{\fontsize\tf@size\z@\usefont{T2A}{\rmdefault}{m}{n}#1}}
  {\mbox{\fontsize\sf@size\z@\usefont{T2A}{\rmdefault}{m}{n}#1}}
  {\mbox{\fontsize\ssf@size\z@\usefont{T2A}{\rmdefault}{m}{n}#1}}
}}
\newcommand{\Bcyr}{\easycyrsymbol{\CYRB}}
\newcommand{\Sha}{{\mbox{\textcyr{Sh}}}}
\newcommand{\defi}[1]{\textsf{#1}} 
\def\act#1#2%
\newcommand{\Z}{{\mathbb Z}}
\newcommand{\Q}{{\mathbb Q}}
\newcommand{\R}{{\mathbb R}}
\newcommand{\F}{{\mathbb F}}
\newcommand{\A}{{\mathbb A}}
\newcommand{\PP}{{\mathbb P}}
\newcommand{\G}{{\mathbb G}}
\newcommand{\Cbar}{{\overline{C}}}
\newcommand{\kbar}{{\overline{k}}}
\newcommand{\Xbar}{{\overline{X}}}
\newcommand{\Ybar}{{\overline{Y}}}
\newcommand{\calA}{{\mathcal A}}
\newcommand{\calG}{{\mathcal G}}
\newcommand{\calJ}{{\mathcal J}}
\newcommand{\calM}{{\mathcal M}}
\newcommand{\calO}{{\mathcal O}}
\newcommand{\calX}{{\mathcal X}}
\newcommand{\calY}{{\mathcal Y}}
\newcommand{\frakL}{{\mathfrak L}}
\newcommand{\res}{\textup{res}}
\DeclareMathOperator{\unr}{unr}
\DeclareMathOperator{\Sel}{Sel}
\DeclareMathOperator{\End}{End}
\DeclareMathOperator{\Hom}{Hom}
\DeclareMathOperator{\disc}{disc}
\DeclareMathOperator{\ord}{ord}
\DeclareMathOperator{\Cor}{Cor}
\DeclareMathOperator{\Res}{Res}
\DeclareMathOperator{\Br}{Br}
\DeclareMathOperator{\Div}{Div}
\DeclareMathOperator{\Pic}{Pic}
\DeclareMathOperator{\Jac}{Jac}
\DeclareMathOperator{\HH}{H}
\DeclareMathOperator{\Spec}{Spec}
\DeclareMathOperator{\inv}{inv}
\newcommand{\val}{\operatorname{val}}
\newtheorem{Theorem}{Theorem}[section]
\newtheorem{Lemma}[Theorem]{Lemma}
\newtheorem{Proposition}[Theorem]{Proposition}
\newtheorem{Definition}[Theorem]{Definition}
\newtheorem{Remark}[Theorem]{Remark}
\newtheorem{Algorithm}[Theorem]{Algorithm}
\numberwithin{equation}{section}
\begin{document}

\title{Brauer-Manin obstructions on hyperelliptic curves}

\author{Brendan Creutz}

\author{Duttatrey Nath Srivastava}

\maketitle
\begin{abstract}
We describe a practical algorithm for computing Brauer-Manin obstructions to the existence of rational points on hyperelliptic curves defined over number fields. This offers advantages over descent based methods in that its correctness does not rely on rigorous class and unit group computations of large degree number fields. We report on experiments showing it to be a very effective tool for deciding existence of rational points: Among a random samples of curves over $\Q$ of genus at least $5$ we were able to decide existence of rational points for over $99\%$ of curves. We also demonstrate its effectiveness for high genus curves, giving an example of a genus $50$ hyperelliptic curve with a Brauer-Manin obstrution to the Hasse Principle. The main theoretical development allowing for this algorithm is an extension of the descent theory for abelian torsors to a framework of torsors with restricted ramification.
\end{abstract}

\section{Introduction}

This paper is concerned with the problem of deciding the existence of a rational point on a algebraic curve $C$ defined over a number field $k$. For curves of genus $0$ it has long been known that the existence of $k$-rational points is decidable, as it is equivalent to the existence of points defined over the local fields containing $k$. For curves of positive genus the local-global principle can fail and there is no proven algorithm to decide if the set $C(k)$ of $k$-rational points is nonempty. However, it has been conjectured that the nonexistence of rational points on curves can always be explained by a Brauer-Manin obstruction \cite{Poonen,Stoll} (This was first posed as a question in \cite{TorsorsAndRationalPoints}). If the conjecture is true, then the existence of rational points on curves is decidable because searching for points by day and searching for obstructions by night must eventually produce an answer. 

We describe a practical algorithm to compute the obstruction coming from the elements of order $2$ in the Brauer group of a hyperelliptic curve.
We have implemented the algorithm in the Magma computational algebra system \cite{magma} and find it performs well for hyperelliptic curves of genus $g \le 10$ given by equations with moderately sized coefficients. We also find that it is a very effective tool for deciding existence of rational points. In a sample of genus $5$ curves over $\Q$ drawn by choosing the coefficients of a defining polynomial uniformly at random from integers of size $\le 100$, the algorithm demonstrated a (nonlocal) Brauer-Manin obstruction for over two thirds of the curves, enabling us to decide on the existence of points for $99.6\%$ of the curves considered. For curves of genus $10$ with coefficients of size absolutely bounded by $10$, we were able to decide existence of rational points for all of the curves in our sample. Such a level of success is in line with the recent result of Bhargava \cite{Bhargava,BGW} that a density approaching $100\%$ of hyperelliptic curves of genus $g$ have an obstruction coming from the $2$-torsion subgroup $\Br(C)[2]$ of the Brauer group of the curve (or a local obstruction).

Our approach requires that we can identify and explicitly represent the correct elements in the infinite group $\Br(C)[2]$ to be used in the computation, and herein lies the main theoretical novelty of the paper. In Section~\ref{sec:generalities} we extend the descent theory for torsors under finite abelian group schemes described in \cite[Chapter 6]{TorsorsAndRationalPoints} building on \cite{ColliotTheleneSansuc} to handle torsors unramified outside a given set of places $S$ of the number field (the original case being when $S$ consists of all places). This enables us to prove that the obstruction coming from $\Br(C)[2]$ is equivalent to that coming from the `unramified outside $S$ subgroup' of $\Br(C)[2]$ for a finite set $S$ (See Theorem~\ref{thm:BrlambdaS} for the precise statement, which applies more generally to any smooth projective and geometrically integral variety). We believe this development will be of interest in its own right. Its relevance in the present context is that it allows us to bound the running time of our algorithm a priori because (modulo constant algebras) the unramified outside $S$ subgroup is finite.

To write down explicit elements of $\Br(C)[2]$ that are unramified outside $S$, we make use of \cite{CreutzViray}, which gives an explicit construction of Brauer classes in a certain subgroup which we denote by $\Br_{\Upsilon}(C)[2] \subset \Br(C)[2]$. In Section~\ref{sec:4} we show, somewhat surprisingly, that the obstruction coming from $\Br(C)[2]$ is equivalent to that coming from $\Br_\Upsilon(C)[2]$. The proof of this fact relies on an interpretation of the elements of the fake Selmer set in \cite{BruinStoll} in terms of torsors that are not geometrically connected. While the result here is specific to the situation of hyperelliptic curves, the idea may prove useful in understanding the connection between Brauer-Manin obstructions and the `fake descents' described in \cite{PoonenSchaefer,BPS,GeneralizedJacs} and elsewhere.

\subsection{Comparison with other methods}
It is known that the obstruction coming from $\Br(C)[2]$ is equivalent to the two-cover descent obstruction described in detail in \cite{BruinStoll} building on \cite{BruinFlynn}. Building on this, our approach offers significant advantages. To explain, let us compare the methods. The descent algorithm first computes a finite set of two-coverings with the property that it contains all locally solvable coverings and then carries out local computations to find the subset of them that are actually locally solvable.  The result is a partition of the set of adelic points surviving $2$-descent in the form
\[
	C(k) \subset C(\A_k)^{2\textup{-desc}} = \bigcup_{(Y,\pi) \in \Sel^2(C/k)} \pi(Y(\A_k))\,.
\]
The first step requires class and unit group information in number fields of degree $O(g)$. While there are subexponential algorithms whose correctness is conditional on the generalized Riemann hypothesis, rigorous computation is usually infeasible for $g > 1$. If this is not computed rigorously, then there is no guarantee that the resulting set contains all of the $k$-rational points.

By way of contrast, the set of adelic points orthogonal to a subgroup $B \subset \Br(C)$ is an intersection\,,
\[
	C(k) \subset C(\A_k)^{B} = \bigcap_{\calA \in B} C(\A_k)^{\calA}\,,
\]
where $C(\A_k)^\calA$ is the set of adelic points that are orthogonal to $\calA$. To compute this one enumerates the elements $\calA_1,\calA_2,\ldots$ of $B$ and at the $n$th step computes the intersection of the first $n$. At every step of the process one has a set $X_n = C(\A_k)^{\{\calA_1,\dots,\calA_n\}}$ which is guaranteed (unconditionally) to contain $C(k)$. This allows us to give an algorithm with similar complexity to the conditional descent algorithm, but whose output is rigorous (See Theorem~\ref{thm:S1} and Algorithm~\ref{alg1}). Moreover, there are situations in which even the conditional descent algorithm is infeasible but we are able to produce a Brauer class and check that it gives an obstruction. In Proposition~\ref{prop:g50} we give an example of a genus $50$ hyperelliptic curve over $\Q$ with a Brauer-Manin obstruction to the local-global principle. Using the algorithm of \cite{BruinStoll} this would require class and unit group computations in a number field of degree $102$ which is completely infeasible even assuming GRH.

There are other methods for computing the set of rational points on general curves including $p$-adic methods based on ideas of Chabauty \cite{Chabauty} (See \cite{PoonenChab} for recent developments in this direction) as well as the Mordell-Weil Sieve \cite{Scharashkin,Flynn,BruinStollMWS}. These require an explicit embedding of the curve in its Jacobian given either by a rational point (in which case existence of points is already decided) or by a rational divisor of degree $1$. Deciding existence of such an embedding is equivalent to deciding existence of rational points on the torsor $\Pic^1_C$ parameterizing divisor classes of degree $1$ on $C$. In practice this can be approached using descent algorithms as described in \cite{CreutzANTSX} or \cite[Section 6]{CreutzViray} or via the Brauer-Manin obstruction using the techniques developed in this paper, with the latter having advantages over the former similar to those described above in the context of points on curves. It should also be noted that these methods require knowledge of the set of rational points on the Jacobian, which typically requires the use of descent based techniques relying on GRH for their practical implementation.

\subsection{Outline of the paper}
Section~\ref{sec:Notation} sets some notation and definitions used throughout the paper. Sections~\ref{sec:generalities} and~\ref{sec:4} are, as noted above, devoted to developing the required connections between the descent and Brauer-Manin obstructions to enable the algorithm. 
Section~\ref{sec:4} also recalls the explicit description of Brauer classes given in \cite{CreutzViray} and provides some further details regarding the evaluation of these Brauer classes at points on the curve. Details of the algorithm are given in Section~\ref{sec:algorithm}. In Section~\ref{sec:Data} we provide the results of experiments with the algorithm as well as some specific examples.

\subsection{Acknowledgements} Both authors were supported by the Marsden Fund Council administered by the Royal Society of New Zealand. We thank Bianca Viray for initial discussions on the possibility of using the results of \cite{CreutzViray} to compute Brauer-Manin obstructions along the lines described here as well as for comments on an earlier draft of the paper.

\section{Notation}\label{sec:Notation}

Throughout the paper $k$ will denote a field of characteristic $0$. When $k$ is a number field we use $\Omega_k$ to denote the set of places of $k$, $S \subset \Omega_k$ to denote a subset containing all archimedean primes and $\calO_{k,S} \subset k$ to denote the ring of $S$-integers. Given $v \in \Omega_k$ we use $k_v$ to denote the completion of $k$ at $v$ and $\calO_v$ to denote the ring of integers in $k_v$.

Let $M$ be a finite \'etale abelian $k$-group scheme whose order is a unit in $\Spec(\calO_{k,S})$. Given $v \in \Omega_k$ we say that an element $\xi \in \HH^1(k_v,M)$ (or in $\HH^1(k,M)$) is \defi{unramified} (at $v$) if it lies in the kernel of the restriction map to $\HH^1(k_v^{\unr},M)$, where $k_v^{\unr}$ is the maximal unramified extension of $k_v$. For any $v \in \Omega_k$, the cup product induces a canonical nondegenerate pairing \cite[Corollary I.2.3]{ADT}
\begin{equation}\label{eq:localpairing}
	\HH^1(k_v,M) \times \HH^1(k_v,M^\vee) \to \Q/\Z\,,
\end{equation}
where $M^\vee = \Hom_k(M,\G_m)$ is the Cartier dual of $M$. If $M(k_v^{\unr}) = M(\kbar)$, then the unramified subgroups are exact annihilators by \cite[Theorem I.2.6]{ADT}. In general, the exact annihilator of the unramified subgroup of $\HH^1(k_v,M^\vee)$ may be larger than the unramified subgroup of $\HH^1(k_v,M)$.

We define $\HH^1_S(k,M)$ to be the subgroup of elements of $\HH^1(k,M)$ that are orthogonal to the unramified subgroup of $\HH^1(k_v,M^\vee)$ for all $v \not\in S$. When $M$ spreads out to a smooth group scheme $\calM$ over $\Spec(\calO_{k,S})$, then $\HH^1_S(k,M)$ consists of those elements that are unramified outside $S$ and $\HH^1_S(k,M)$ can be identified with the \'etale cohomology group $\HH^1(\calO_{k,S},\calM)$ (see \cite[Proposition II.2.9]{ADT} and \cite[Section 6.5.7]{PoonenRatP}).
We define
\begin{align*}
	\Sha^i_S(k,M) &:= \ker\left( \HH^i_S(k,M)\to \prod_{v \in S}\HH^i(k_v,M)\right)\,.
\end{align*}
When $S = \Omega_k$ we abbreviate $\Sha^i(k,M) := \Sha^i_{\Omega_k}(k,M)$. When $M$ spreads out to a smooth group scheme over $\Spec(\calO_{k,S})$, our $\Sha^1_S(k,M)$ agrees with that of \cite[Theorem I.4.10]{ADT}, from which it follows that $\Sha^1_S(k,M)$ is finite for any $S$. Indeed, we can enlarge $S$ to $S'$ so that $M$ spreads out and use that $\Sha^1_S(k,M) \subset \Sha^1_{S'}(k,M)$.

The Brauer group of a scheme $X$ is defined as the \'etale cohomology group $\Br(X) := \HH^2(X,\G_m)$. When $X$ is a variety over a field $k$, we use $\Br_0(X)$ to denote the image of the natural map $\Br(k) \to \Br(X)$ and $\Br_1(X)$ to denote the kernel of the natural map $\Br(X) \to \Br(\Xbar)$, where $\Xbar = X \times_{\Spec(k)}\Spec(\kbar)$ is the base change of $X$ to an algebraic closure of $k$. For a commutative ring $R$ we define $\Br(R) := \HH^2(\Spec(R),\G_m)$. 

\section{A computable description of the Brauer-Manin obstruction}\label{sec:generalities}

Throughout this section $k$ will denote a number field. Suppose $X$ is a smooth, projective and geometrically integral variety over $k$, $G$ is a finite \'etale abelian group scheme over $k$ and $M = G^\vee := \Hom(G,\G_m)$. Let $\lambda:M(\kbar) \to \Pic(\Xbar)$ be a morphism of Galois modules. We consider $X$-torsors under $G$ of type $\lambda$. For example, if $X$ is a hyperelliptic curve with Jacobian $J$ and $\lambda : J[2](\kbar) = \Pic^0(\Xbar)[2] \subset \Pic(\Xbar)$ is the inclusion map, then $X$-torsors of type $\lambda$ are the two-coverings considered in \cite{BruinStoll}. For the precise definition see~\cite[Definition 2.3.2]{TorsorsAndRationalPoints}. 

Let $r : \Br_1(X) \to \HH^1(k,\Pic(\Xbar))$ be the canonical map from the Hochshild-Serre spectral sequence $\HH^p(k,\HH^q(\Xbar,\G_m)) \Rightarrow \HH^{p+q}(X,\G_m)$ \cite[(2.23)]{TorsorsAndRationalPoints}. Then any subgroup $H \subset \HH^1(k,M)$ gives rise to the subgroup $r^{-1}(\lambda_*(H)) \subset \Br_1(X)$. In particular, for any $S \subset \Omega_k$ containing all archimedean primes, the subgroups $\Sha^1_{S}(k,M) \subset \HH^1_{S}(k,M) \subset \HH^1(k,M)$ defined in Section~\ref{sec:Notation} determine subgroups
\begin{align*}
	\Bcyr_{\lambda,S}(X) &:=  r^{-1}\left(\lambda^*\left( \Sha^1_{S}(k,M)\right) \right)\,,\\
	 \Br_{\lambda,S}(X)  &:=  r^{-1}\left(\lambda^*\left( \HH^1_{S}(k,M) \right)\right)\,,\text{ and}\\
	\Br_\lambda(X) &:=  r^{-1}\left(\lambda^*\left(\HH^1(k,M)\right) \right)\,.
\end{align*}
These give a filtration of the algebraic Brauer group,
\[
	 \Bcyr_{\lambda,S}(X) \subset \Br_{\lambda,S}(X) \subset \Br_\lambda(X) \subset \Br_1(X)\,.
\]
We note that $\Br_{\lambda,S}(X)/\Br_0(X)$ is finite when $S$ is finite by \cite[Corollary I.4.15]{ADT}. The following theorem gives a computable description of the obstruction coming from the infinite group $\Br_\lambda(X)/\Br_0(X)$.

\begin{Theorem}\label{thm:BrlambdaS}
	Given $X$ and $\lambda$ there is an explicitly computable finite set of primes $S = S(X,\lambda)$ such that the images of $X(\A_k)^{\Br_\lambda(X)}$ and $X(\A_k)^{\Br_{\lambda,S}(X)}$ in $\prod_{v \in S}X(k_v)$ coincide. In particular, 
	\[
	X(\A_k)^{\Br_\lambda(X)} = \emptyset \quad \Leftrightarrow \quad X(\A_k)^{\Br_{\lambda,S}(X)} = \emptyset\,.
	\]
\end{Theorem}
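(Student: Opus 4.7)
The containment $\Br_{\lambda,S}(X)\subset\Br_\lambda(X)$ gives $X(\A_k)^{\Br_\lambda(X)}\subset X(\A_k)^{\Br_{\lambda,S}(X)}$, and hence one inclusion of $S$-projections is automatic. The content of the theorem is the reverse inclusion: every $(x_v)_{v\in S}\in\prod_{v\in S}X(k_v)$ arising from an adelic point orthogonal to $\Br_{\lambda,S}(X)$ should also be attained by an adelic point orthogonal to the (potentially much larger) group $\Br_\lambda(X)$. The plan is to make $S$ explicit, use an evaluation formula and local duality to reduce the problem to one about local cohomology of $G=M^\vee$, and then push the result through a formal-lemma argument combined with compactness.

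First I would enlarge $S$ to contain all archimedean primes, all primes of bad reduction of $X$, and all primes at which $M$ is ramified or $|M|$ is not a unit, so that $X$ spreads to a smooth proper model $\calX/\Spec(\calO_{k,S})$, $M$ extends to a finite \'etale group scheme $\calM$, and the unramified subgroups on the two sides of~\eqref{eq:localpairing} are exact annihilators at every $v\notin S$; in particular $\HH^1_S(k,M)$ consists of the classes of $\HH^1(k,M)$ unramified at every $v\notin S$. The central local input is the evaluation identity $\inv_v\calA_\xi(x_v)=\langle [Y_{x_v}],\xi_v\rangle_v$, where $Y\to X$ is a fixed $X$-torsor under $G$ of type $\lambda$ (absorbing its existence into an element of $\Br_0(X)$), $[Y_{x_v}]\in\HH^1(k_v,G)$ is the class of the fiber at $x_v$, and the pairing is local Tate duality. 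Properness gives $X(k_v)=\calX(\calO_v)$ for $v\notin S$, and the spread-out torsor $\calY\to\calX$ forces $[Y_{x_v}]\in\HH^1(k_v,G)_{\unr}$; therefore, whenever $\xi\in\HH^1_S(k,M)$, the contributions to $\sum_v\inv_v\calA_\xi(x_v)$ from places outside $S$ all vanish by the annihilator property, so that $\Br_{\lambda,S}$-orthogonality becomes a condition purely on $(x_v)_{v\in S}$.

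To upgrade $\Br_{\lambda,S}$-orthogonality to $\Br_\lambda$-orthogonality I would argue finite-subgroup by finite-subgroup. For any finite subgroup $\Br_{\lambda,S}(X)\subset B\subset\Br_\lambda(X)$, the extra classes correspond to finitely many elements $\xi_1,\dots,\xi_n\in\HH^1(k,M)$ each ramified at finitely many $v\notin S$. A Harari-style formal lemma, driven by Chebotarev density together with the richness of $\calX(\F_v)\to\HH^1(\F_v,G)=\HH^1(k_v,G)_{\unr}$ at suitably split places, should allow one to modify the components $x_v$ at a finite set of $v\notin S$ to enforce $\sum_v\inv_v\calA_{\xi_i}(x_v)=0$ for $i=1,\dots,n$ while leaving $(x_v)_{v\in S}$ unchanged. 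Because $X(\A_k)$ is compact and each $X(\A_k)^\calA$ is closed, the family of closed subsets $\pi_S^{-1}((x_v)_{v\in S})\cap X(\A_k)^B$, indexed by the filtered poset of finite $B$, is a descending intersection of nonempty compact sets and hence nonempty, producing the desired point of $X(\A_k)^{\Br_\lambda(X)}$ projecting to $(x_v)_{v\in S}$.

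The hard part will be the formal-lemma step: verifying that one can simultaneously steer the local evaluations $\calA_{\xi_i}(x_v)$ at primes outside $S$ to prescribed values by actual choices of local points on $X$. This reduces, via local Tate duality and Poitou-Tate, to showing that the fiber map $\calX(\F_v)\to\HH^1(\F_v,G)$ has sufficient image at an infinite set of Frobenius-controlled primes to realise the corrections dictated by the ramified parts of the $\xi_i$, and this is where the geometric role of $\calY\to\calX$ at places of good reduction enters essentially.
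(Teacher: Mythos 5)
Your first two paragraphs are sound (the one automatic inclusion, the spreading-out, and the vanishing of the local terms $\inv_v\calA_\xi(x_v)$ for $v\notin S$ when $\xi\in\HH^1_S(k,M)$ is exactly~\eqref{brtorsor}), and the compactness reduction to finite subgroups $B$ is fine in principle. But there is a genuine gap, and it is located exactly where you flag "the hard part": the formal-lemma step cannot work with the set $S$ you have chosen. Your $S$ consists only of archimedean primes, primes of bad reduction, and primes where $M$ ramifies or $|M|$ fails to be a unit. The theorem is \emph{false} for such an $S$: the genus $5$ example of Section~\ref{subsec:g5} has $C(\A_\Q)^{\Br_{\Upsilon,S_{\min}}(C)}\ne\emptyset$ while $C(\A_\Q)^{\Br_{\Upsilon,S_{\min}\cup\{239\}}(C)}=\emptyset$, where $239$ is a prime of good reduction. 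The missing ingredient is that $S$ must also contain all primes of residue cardinality below an explicit Lang--Weil/Weil-conjecture bound $B$ (plus, when $\lambda$ is not injective, enough primes to force $\Sha^1_S(k,G/G_0)=\Sha^1(k,G/G_0)$ so that the scheme of connected components of the relevant torsors is everywhere locally trivial).

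The reason your steering argument fails without these extra primes is twofold. First, the classes $\calA_\xi$ lie in $\Br(X)$ itself, not merely in $\Br(U)$ for a dense open $U$, so Harari's formal lemma gives you nothing: the evaluation map of $\calA_\xi$ is identically zero at all but the finitely many $v$ where $\xi_v$ fails to annihilate $U_v$, and there is no infinite Chebotarev-supplied reservoir of places at which to turn dials. Second, at the finitely many relevant places $v\notin S$ the set of achievable values $\{\inv_v(Y_{x_v}\cup\xi_v):x_v\in X(k_v)\}$ is the pairing of $\xi_v$ with the \emph{image} of the evaluation map $X(k_v)\to U_v$, and at a prime of good reduction with small residue field this image can be a proper subset of $U_v$ (equivalently, some unramified twist $Y^\sigma$ can fail to have a $k_v$-point); this is precisely the phenomenon at $p=239$ above. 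The paper's proof takes a different route that sidesteps steering altogether: Lemma~\ref{lem:BrlambdaS} (via the generalized Poitou--Tate sequence) converts $\Br_{\lambda,S}$-orthogonality into the existence of a twist $Y^\tau$ with $\tau\in\HH^1_S(k,G)$ and $x_v\in\pi^\tau(Y^\tau(k_v))$ for all $v\in S$; the Weil-bound condition on $S$ then forces $Y^\tau(k_v)\ne\emptyset$ for every $v\notin S$ (Lemma~\ref{lem:Sassumptions}\eqref{assumption2}); and the classical descent identity $\bigcup_\tau\pi^\tau(Y^\tau(\A_k))=X(\A_k)^{\Br_\lambda(X)}$ finishes the argument. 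To repair your proof you would need to add the Weil-bound primes to $S$ and replace the formal-lemma step with an argument showing the evaluation image equals all of $U_v$ outside $S$ — at which point you have essentially reconstructed the paper's proof.
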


The proof will be completed at the end of this section. It shows that we may take $S$ to be any subset of $\Omega_k$ containing all archimedean primes such that $X$ spreads out to a smooth proper scheme over $\Spec(\calO_{k,S})$, $\lambda$ spreads out to a morphism of smooth proper group schemes and $S$ contains 
\begin{itemize}
	\item all primes of residue cardinality below an explicit bound coming from the generalized Weil conjectures (which depends only on the Betti numbers in the cohomology of $\Ybar = Y \times_k\kbar$ where $Y$ is a torsor of type $\lambda$), and
	\item enough primes to ensure a certain technical hypothesis holds in the case that $\lambda$ is not injective (in which case the torsors of type $\lambda$ are not geometrically connected).
\end{itemize}

The set just described is not the minimal $S$ for which the theorem holds. We have taken care to state and prove the lemmas used in the proof for sets $S$ smaller than that above where possible, even though this introduces a number of fairly technical points that could otherwise be avoided. Our reason for doing so is that we are ultimately interested in practical computations, in which case it is best to take $S$ as small as possible. These lemmas will be used also in the proof of Theorem~\ref{thm:S1}, which is a version of Theorem~\ref{thm:BrlambdaS} for hyperelliptic curves allowing $S$ to omit odd primes where the reduction has a simple node. Similarly, there are practical reasons requiring us to deal with torsors that are not geometrically connected (corresponding to elements in the `fake Selmer set' of \cite{BruinStoll,PoonenSchaefer} -- see Remark~\ref{rmk:fakeSel}), so working in this generality will pay off.

\begin{Definition}\label{def:unrtorsor}
	We say that a torsor $Y \to X_{k_v}$ under $G$ is \defi{unramified} if the map
	\[
		X(k_v) \ni x_v \mapsto Y_{x_v} \in \HH^1(k_v,G)\,
	\]
	sending a point $x_v$ to the class of the fiber above it has its image contained in the unramified subgroup. We say that an $X$-torsor under $G$ is \defi{unramified at $v \in \Omega_k$} (resp., \defi{unramified outside $S \subset \Omega_k$}) if its base change to $\Spec(k_v)$ is unramified (resp., for all $v \not\in S$).
\end{Definition}

\begin{Lemma}\label{lem:spreading}
	Suppose $X(k_v) \ne \emptyset$ and $X_{k_v}$ spreads out to a smooth projective scheme $\calX_v \to \Spec(\calO_{v})$ and $\lambda$ spreads out to a morphism $\calM_v \to \Pic_{\calX_v/\Spec(\calO_{v})}$ of smooth proper group schemes over $\Spec(\calO_{v})$. Then
	\begin{enumerate}
		\item\label{part1} There exists an unramified $X_{k_v}$-torsor of type $\lambda$.
		\item\label{part2} If $Y \to X_{k_v}$ is a torsor of type $\lambda$ which spreads out to an $\calX_v$-torsor under $\calG_v = \calM_v^\vee$, then $Y \to X_{k_v}$ is unramified.
	\end{enumerate}
\end{Lemma}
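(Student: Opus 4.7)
The plan is to dispatch (2) first as a spreading-out argument and then bootstrap it to prove (1) by producing an integral torsor via a Colliot-Thélène--Sansuc-style descent carried out over $\Spec(\calO_v)$.

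For (2), I fix an arbitrary $x_v\in X(k_v)$. Since $\calX_v\to\Spec(\calO_v)$ is proper, the valuative criterion extends $x_v$ to a section $\tilde x_v\in\calX_v(\calO_v)$; smoothness would let one invoke Hensel's lemma instead. Pulling back the spread-out torsor $\calY\to\calX_v$ along $\tilde x_v$ yields a $\calG_v$-torsor over $\Spec(\calO_v)$, whose class lies in $\HH^1(\calO_v,\calG_v)$. Because $\calG_v$ is smooth and finite over the Henselian local ring $\calO_v$ with residue characteristic prime to $\#G$, this étale $\HH^1$ injects into $\HH^1(k_v,G)$ with image equal to the unramified subgroup (via the identification with Galois cohomology of $G(k_v^{\unr})$). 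Consequently $Y_{x_v}$ is unramified, and as $x_v$ was arbitrary, so is $Y\to X_{k_v}$.

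For (1), I pick any $x_0\in X(k_v)$ and lift it to $\tilde x_0\in\calX_v(\calO_v)$ as above. The Colliot-Thélène--Sansuc type map, applied to $\calX_v\to\Spec(\calO_v)$ in the étale topology with coefficients in $\calG_v$, should fit into a five-term sequence
\[
\HH^1(\calO_v,\calG_v)\longrightarrow\HH^1_{\et}(\calX_v,\calG_v)\xrightarrow{\chi}\Hom(\calM_v,\Pic_{\calX_v/\calO_v})\longrightarrow\HH^2(\calO_v,\calG_v)
\]
in which the section $\tilde x_0$ both splits the first arrow and kills the $\HH^2$ obstruction; thus $\chi$ is surjective. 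By hypothesis $\lambda$ lifts to a morphism $\calM_v\to\Pic_{\calX_v/\calO_v}$ lying in the target of $\chi$, so I may choose a preimage $\calY$, an $\calX_v$-torsor under $\calG_v$ whose generic fiber $Y:=\calY_{k_v}\to X_{k_v}$ is a torsor of type $\lambda$. Part (2), applied to this $\calY$, then shows $Y$ is unramified.

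The main obstacle will be installing the integral version of the CTS formalism and checking that it is compatible with base change to the generic fiber. Concretely, I would need to justify the five-term sequence above (via Leray/Hochschild--Serre for $\calX_v\to\Spec(\calO_v)$), confirm that $\Pic_{\calX_v/\calO_v}$ is representable with generic fiber the $\Pic$ used in defining type, and verify that the integral type map $\chi$ restricts on the generic fiber to the usual type map of \cite[Chapter 2]{TorsorsAndRationalPoints} sending $\calY_{k_v}$ to a morphism recovering $\lambda$. The smoothness and properness hypotheses on $\calX_v$ and $\calM_v$, together with $\#G$ being a unit on $\calO_v$, are exactly what make these compatibilities go through.
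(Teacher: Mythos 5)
Your proposal is correct and follows essentially the same route as the paper: part (2) is the observation that the evaluation map factors through $\HH^1(\calO_v,\calG_v)\to\HH^1(k_v,G)$, whose image is the unramified subgroup, and part (1) is deduced from (2) by using the integral point to get surjectivity of the type map over $\Spec(\calO_v)$ (the paper sets this up as a commutative diagram of Leray/Colliot-Th\'el\`ene--Sansuc exact sequences over $k_v$ and over $\calO_v$, citing Antei for the integral row) and then lifting the spread-out $\lambda$ to an $\calX_v$-torsor. The compatibilities you flag at the end are exactly what the paper handles by citation rather than proof.
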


\begin{proof}
	Since $X_{k_v}(k_v) = \calX_v(\calO_v) \ne \emptyset$, the type map fits into the following commutative diagram with exact rows,
	\[
		\xymatrix{
			0 \ar[r]& \HH^1(k_v,G) \ar[r]& \HH^1(X_v,G) \ar[r]^-{\textrm{type}} & \Hom(M,\Pic_{X_v}) \ar[r] & 0\\
			0 \ar[r]& \HH^1(\calO_v,\calG_v) \ar[r]\ar[u] & \HH^1(\calX_v,\calG_v) \ar[r]\ar[u] & \Hom(\calM,\Pic_{\calX_v/\calO_v}) \ar[u] \ar[r]& 0
		}
	\]
	where the rows come from the Leray spectral sequence as in \cite[Corollary 2.3.9]{TorsorsAndRationalPoints} (see also \cite[Proposition 3.2]{Antei} for an alternative construction of the bottom row) and the vertical arrows arise from taking generic fibers. As noted in the discussion at the bottom of \cite[p. 33]{TorsorsAndRationalPoints}, the existence of a $k_v$-rational point on $X_v$ implies surjectivity of the type map. As $\calX_v(\calO_v) \ne \emptyset$, the same argument gives surjectivity of the corresponding map over $\Spec(\calO_v)$ (see \cite[Proposition 3.2]{Antei}). The assumption that $\lambda$ spreads out implies (using the diagram) that there is an $X_{k_v}$-torsor of type $\lambda$ which spreads out as in~\eqref{part2}. So,~\eqref{part2} $\Rightarrow$ \eqref{part1}. 
	
	To prove~\eqref{part2}, note that the evaluation map $X(k_v) \to \HH^1(k_v,G)$ factors through the evaluation map of the $\calX_v$-torsor, so its image lies in the image of $\HH^1(\calO_v,\calG_v) \to \HH^1(k_v,G)$ which is the unramified subgroup.	
\end{proof}

\begin{Lemma}\label{lem:BrlambdaS}
	Suppose that $\pi: Y \to X$ is a torsor of type $\lambda$ which is unramified outside $S$. Then 
	\[
		X(\A_k)^{\Br_{\lambda,S}(X)} = \left\{ (x_v) \in X(\A_k)\;:\; \exists\tau \in \HH^1_S(k,G) \text{ such that $\forall v \in S$, }  x_v \in \pi^\tau(Y^\tau(k_v)) \right\}.
	\]
\end{Lemma}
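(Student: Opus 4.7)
The plan is to reduce both inclusions to the standard cup-product compatibility between the Brauer--Manin pairing and the fiber evaluation $(x_v)\mapsto(Y_{x_v})$, then invoke Poitou--Tate duality for the restricted-ramification groups $\HH^1_S$. First I would record two preparatory facts. For $\tau\in\HH^1(k,G)$ and $x_v\in X(k_v)$, the fiber of the twist satisfies $(Y^\tau)_{x_v}=Y_{x_v}-\tau_v$ in $\HH^1(k_v,G)$ (using the abelian group structure on $G$-torsors), so $x_v\in\pi^\tau(Y^\tau(k_v))$ if and only if $Y_{x_v}=\tau_v$; hence the right-hand side of the lemma is exactly the set of $(x_v)$ for which some $\tau\in\HH^1_S(k,G)$ satisfies $Y_{x_v}=\tau_v$ at every $v\in S$. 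Second, for $\calA\in\Br_\lambda(X)$ with $r(\calA)=\lambda_*(\alpha)$, $\alpha\in\HH^1(k,M)$, the standard descent compatibility (as in the proof of \cite[Theorem 6.1.2]{TorsorsAndRationalPoints}) yields
\[
\sum_v\inv_v(\calA(x_v))=\sum_v\langle\alpha_v,Y_{x_v}\rangle_v,
\]
where $\langle,\rangle_v$ is the local pairing of \eqref{eq:localpairing}.

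For the inclusion $\supseteq$, suppose $\tau\in\HH^1_S(k,G)$ witnesses the right-hand side and fix $\alpha\in\HH^1_S(k,M)$ with $\calA=r^{-1}(\lambda_*\alpha)\in\Br_{\lambda,S}(X)$. By bilinearity and global reciprocity for $\alpha$ and $\tau$,
\[
\sum_v\langle\alpha_v,Y_{x_v}\rangle_v=\sum_v\langle\alpha_v,Y_{x_v}-\tau_v\rangle_v.
\]
The contribution at each $v\in S$ vanishes because $Y_{x_v}=\tau_v$. At $v\notin S$ the class $Y_{x_v}$ is unramified (since $Y$ is unramified outside $S$), while $\tau_v$ lies in the annihilator of the unramified subgroup of $\HH^1(k_v,M)$ by definition of $\HH^1_S(k,G)$; at primes of good reduction for $M$ (which covers all $v\notin S$ under the standing hypothesis of Section~\ref{sec:Notation} that $|M|$ is a unit outside $S$) this annihilator coincides with the unramified subgroup of $\HH^1(k_v,G)$ by \cite[Theorem I.2.6]{ADT}. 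Hence $Y_{x_v}-\tau_v$ is unramified and $\alpha_v$ annihilates it, so $\sum_v\inv_v(\calA(x_v))=0$.

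For the inclusion $\subseteq$, let $(x_v)\in X(\A_k)^{\Br_{\lambda,S}(X)}$. The same vanishing at $v\notin S$ reduces the cup-product formula to $\sum_{v\in S}\langle\alpha_v,Y_{x_v}\rangle_v=0$ for every $\alpha\in\HH^1_S(k,M)$. It remains to show that $(Y_{x_v})_{v\in S}$ lies in the image of the restriction $\HH^1_S(k,G)\to\prod_{v\in S}\HH^1(k_v,G)$; via Poitou--Tate duality for $G=M^\vee$, this image should coincide with the annihilator of the image of $\HH^1_S(k,M)$ in $\prod_{v\in S}\HH^1(k_v,M)$ under the sum of the local pairings, supplying the desired $\tau$.

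The main obstacle is making this final Poitou--Tate step rigorous, as the classical nine-term exact sequence \cite[Theorem I.4.10]{ADT} requires $S$ to contain every prime where $M$ has bad reduction. My approach is to enlarge $S$ to a set $S'$ over which $M$ and $G$ spread out to smooth group schemes, apply classical Poitou--Tate to obtain $\tau'\in\HH^1_{S'}(k,G)$ matching $Y_{x_v}$ on $S$, and then adjust $\tau'$ by a class trivial on $S$ to land in the smaller subgroup $\HH^1_S(k,G)$. Verifying that such an adjustment always exists requires a secondary Poitou--Tate comparison controlling the finite quotient $\HH^1_{S'}(k,G)/\HH^1_S(k,G)$ by local data at $S'\setminus S$, and is the most delicate technical point of the proof.
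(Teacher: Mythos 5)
Your setup (the twisting identity $x_v\in\pi^\tau(Y^\tau(k_v))\Leftrightarrow Y_{x_v}=\tau_v$, the cup-product compatibility, and the reduction of the adelic sum to the places in $S$) matches the paper, and your $\supseteq$ direction is essentially sound. One caveat there: your appeal to \cite[Theorem I.2.6]{ADT} is not quite licensed, since that result requires $M(k_v^{\unr})=M(\kbar)$, and the standing hypothesis that $|M|$ is a unit outside $S$ does not force the Galois action on $M$ to be unramified outside $S$. The exact-annihilation of the two unramified subgroups at $v\notin S$ that you need is still true, but it has to come from the local Euler characteristic formula together with $v\nmid\#M$ (the paper explicitly warns that the annihilator can be strictly larger in general).

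The genuine gap is in the $\subseteq$ direction. Your plan is to apply the classical Poitou--Tate sequence over an enlarged set $S'$ to produce $\tau'\in\HH^1_{S'}(k,G)$ with the prescribed components on $S$, and then to correct $\tau'$ into $\HH^1_S(k,G)$. But the classical sequence for $S'$ only manufactures a global class from a local tuple that annihilates the image of $\HH^1_{S'}(k,M)$ in $\prod_{v\in S'}\HH^1(k_v,M)$, whereas orthogonality to $\Br_{\lambda,S}(X)$ gives annihilation only of the image of the smaller group $\HH^1_S(k,M)$; so the existence of $\tau'$ is not established, and the subsequent ``adjustment'' --- which you flag but do not carry out --- is exactly the content that needs proving. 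The paper closes both gaps in one stroke by invoking a \emph{generalized} Poitou--Tate sequence (\cite[Theorem 6.2]{Cesnavicius}, or \cite[Proposition 4.6]{CreutzGW}) with non-uniform local conditions: the middle term is $\prod_{v\in S}\HH^1(k_v,G)\times\prod_{v\in S'-S}\HH^1(k_v,G)/U_v$ and the target is $\HH^1_S(k,M)^*$. Exactness of that sequence says precisely that the tuple $(Y_{x_v})_{v\in S'}$ --- whose components at $v\in S'-S$ vanish in the quotients because $Y$ is unramified there --- lifts to some $\tau\in\HH^1_{S'}(k,G)$ as soon as it kills $\HH^1_S(k,M)$, and any such $\tau$ automatically lies in $\HH^1_S(k,G)$ because its components at $S'-S$ land in $U_v$. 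Without this duality statement for mixed local conditions (or an argument replacing it), your proof of the forward inclusion does not go through.
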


\begin{proof}	
	For some $S'$ containing $S$ such that $S' - S$ is finite, $G$ spreads out to a smooth group scheme $\calG$ over $\Spec(\calO_{k,S'})$ and $Y \to X$ spreads out to a torsor $\calY \to \calX$ under $\calG$. Then $M$ spreads out to a smooth group scheme $\calM$ dual to $\calG$ and $\HH^1_{S'}(k,M) = \HH^1(\calO_{k,S'},\calM)$ and similarly for $\calG$. Moreover, $\HH^1_{S}(k,M) = \ker\left (\HH^1(\calO_{k,S'},\calM) \to \prod_{v \in S'-S}\HH^1(k_v,M)/U_v^\perp\right)$, where $U_v^\perp$ denotes the annihilator of the unramified subgroup $U_v \subset \HH^1(k_v,G)$ under the pairing~\eqref{eq:localpairing}. In this situation the generalized Poitou-Tate exact sequence \cite[Theorem 6.2]{Cesnavicius} (see also~\cite[Proposition 4.6]{CreutzGW} for a proof in the case of finite $S'$ that only uses Galois cohomology) gives an exact sequence
	\[
		\xymatrix{
		\HH^1_{S'}(k,G) \ar[r]& \prod'_{v \in S} \HH^1(k_v,G) \times \prod_{v \in S' - S} \frac{\HH^1(k_v,G)}{U_v} \ar[r]& \HH^1_{S}(k,M)^*\\
		& (\tau_v)_{v\in S'} \ar@{|->}[r] & \left[ \alpha \mapsto \Sigma_{v \in S'} \inv_v(\tau_v \cup \alpha_v)\right],
		}
	\]
	where the cup product is induced by the pairing in~\eqref{eq:localpairing}.
	
	For $\beta \in \Br_{\lambda,S}(X)$ corresponding to $\alpha \in  \HH^1_{S}(k,M)$ and $(x_v) \in X(\A_k)$ we have, as in \cite[(6.8) on p. 121]{TorsorsAndRationalPoints},
	\begin{equation}\label{brtorsor}
		\Sigma_{v \in \Omega_k} \inv_v(\beta(x_v)) = \Sigma_{v \in \Omega_k} \inv_v(Y_{x_v} \cup \alpha_v) = \Sigma_{v \in S} \inv_v(Y_{x_v} \cup \alpha_v)\,,
	\end{equation}
	where the final equality follows from the fact that, for $v \not\in S$, $Y_{x_v}$ is unramified and $\alpha_v := \res_{k_v/k}(\alpha)$ is orthogonal to the unramified subgroup. Exactness of the Poitou-Tate sequence above shows that there exists $\tau \in \HH^1_{S'}(k,G)$ with the same image as $(Y_{x_v})_{v \in S'}$ if and only if $(x_v) \in X(\A_k)^{\Br_{\lambda,S}(X)}$. Note that for any such $\tau$ we have $\tau_v = Y_{x_v}$ for $v \in S$ and $\tau \in \HH^1_S(k,G)$ because $Y_{x_v} \in U_v$ for $v \in S' - S$. By definition, $Y_{x_v} = \tau_v$ is equivalent to $x_v \in \pi^{\tau_v}(Y^{\tau_v}(k_v))$, so this proves the lemma. 
\end{proof}

The next lemma characterizes the existence of unramified outside $S$ torsors of type $\lambda$ in terms of the Brauer-Manin obstruction.

\begin{Lemma}\label{lem:existenceLambda}
	Suppose that $X(\A_k) \ne \emptyset$ and that for every $v \not\in S$ there is an unramified $X_{k_v}$-torsor of type $\lambda$ defined over $k_v$. Then there exists an $X$-torsor of type $\lambda$ unramified outside $S$ if and only if $X(\A_k)^{\Bcyr_{\lambda,S}(X)} \ne \emptyset$.
\end{Lemma}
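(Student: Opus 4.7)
The two directions have rather different flavors: the forward direction reduces to a direct cup-product computation, while the backward direction rests on an $S$-restricted Poitou-Tate duality argument. For the forward direction, suppose $\pi : Y \to X$ is an $X$-torsor of type $\lambda$ unramified outside $S$ and let $(x_v) \in X(\A_k)$ be any adelic point (one exists by hypothesis). For $\beta \in \Bcyr_{\lambda,S}(X)$ corresponding to $\alpha \in \Sha^1_S(k,M)$, the identity~\eqref{brtorsor} gives $\sum_{v} \inv_v(\beta(x_v)) = \sum_{v} \inv_v(Y_{x_v} \cup \alpha_v)$. Each local summand vanishes: at $v \in S$ because $\alpha_v = 0$ by definition of $\Sha^1_S$, and at $v \notin S$ because $Y_{x_v}$ lies in the unramified subgroup of $\HH^1(k_v,G)$ (since $Y \to X$ is unramified outside $S$) while $\alpha_v$ is in its exact annihilator (by definition of $\HH^1_S$). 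Thus $X(\A_k) \subseteq X(\A_k)^{\Bcyr_{\lambda,S}(X)}$, which suffices.

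For the backward direction, fix $(x_v) \in X(\A_k)^{\Bcyr_{\lambda,S}(X)}$. First I would assemble local torsors of type $\lambda$ at every place: at $v \notin S$ the hypothesis provides an unramified $Y_v \to X_{k_v}$, while at $v \in S$ the surjectivity of the local type map -- available because $X(k_v) \ne \emptyset$, via the top row of the diagram in the proof of Lemma~\ref{lem:spreading} -- allows one to pick any torsor $Y_v \to X_{k_v}$ of type $\lambda$. Enlarging $S$ to a finite $S' \supset S$ over which $X$, $G$ and $\lambda$ spread out smoothly over $\Spec(\calO_{k,S'})$ and invoking Lemma~\ref{lem:spreading}(1), I can further assume that $Y_v$ comes from an integral model for $v \notin S'$. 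The obstruction $\eta$ to globalizing this family to an $X$-torsor of type $\lambda$ unramified outside $S$ is an element of $\HH^2_{S'}(k,G)$ coming from the $d_2$-differential of the Leray spectral sequence for $\calX \to \Spec(\calO_{k,S'})$ applied to $\lambda$. By construction $\eta$ is locally trivial at every $v \in S'$, so in fact $\eta \in \Sha^2_S(k,G)$.

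The key remaining step is to identify the Poitou-Tate pairing $\langle \eta, \alpha \rangle$ for $\alpha \in \Sha^1_S(k,M)$ with the Brauer-Manin sum $\sum_v \inv_v(\beta(x_v))$, where $\beta \in \Bcyr_{\lambda,S}(X)$ corresponds to $\alpha$. Using the formula $\beta(x_v) = (Y_v)_{x_v} \cup \alpha_v$ for local evaluation of an algebraic Brauer class in terms of a local torsor representing its type, this would follow by an unwinding similar to the derivation of~\eqref{brtorsor} in the proof of Lemma~\ref{lem:BrlambdaS}. The hypothesis $(x_v) \in X(\A_k)^{\Bcyr_{\lambda,S}(X)}$ then forces $\langle \eta, \alpha \rangle = 0$ for every $\alpha \in \Sha^1_S(k,M)$, so the perfect $S$-Poitou-Tate pairing $\Sha^1_S(k,M) \times \Sha^2_S(k,G) \to \Q/\Z$ (a consequence of \cite[Theorem 6.2]{Cesnavicius}) yields $\eta = 0$ and the desired torsor. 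I expect the hardest step to be the identification of the two pairings independently of the non-canonical local choices $Y_v$: different choices produce correction terms $\inv_v(\tau_v \cup \alpha_v)$ with $\tau_v \in \HH^1(k_v, G)$, which have to be absorbed using the same restricted-ramification Poitou-Tate sequence already invoked in the proof of Lemma~\ref{lem:BrlambdaS}.
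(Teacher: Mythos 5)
Your forward direction is exactly the paper's argument and is fine. The backward direction, however, has a genuine gap at its foundation: you posit a single obstruction class $\eta$ ``to globalizing this family to an $X$-torsor of type $\lambda$ unramified outside $S$'' living in $\HH^2_{S'}(k,G)$ (and then in $\Sha^2_S(k,G)$), but no such class is constructed, and the $d_2$-differential of the Leray spectral sequence for $\calX \to \Spec(\calO_{k,S'})$ only controls the existence of an $\calX$-torsor over $\calO_{k,S'}$, i.e.\ a torsor unramified outside $S'$, not outside $S$. The condition ``unramified outside $S$'' is a condition on evaluation maps at the places of $S'-S$ and is not captured by a class in any $\HH^2$ with coefficients in $G$; the correct cohomological home for this obstruction is the cokernel of $\HH^1_{S'}(k,G) \to \prod_{v\in S'-S}\HH^1(k_v,G)/U_v$, which by the restricted Poitou--Tate sequence maps to $\Sha^1_S(k,M)^*$. (Note also that $\HH^2_S$ and $\Sha^2_S(k,G)$ are not even defined when $G$ does not spread out over $\calO_{k,S}$, and your $\eta$, being locally trivial on $S'$, would at best lie in $\Sha^2_{S'}$, which is dual to $\Sha^1_{S'}(k,M)$ --- a group that may be strictly smaller than the $\Sha^1_S(k,M)$ you need.)

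A second, independent gap: the hypothesis that an unramified local torsor of type $\lambda$ exists at each $v \notin S$ enters your argument only to ``assemble local torsors,'' but its essential role is elsewhere. Even after producing a global torsor whose fibre over the chosen point $x_v$ is unramified for $v \in S'-S$, one must still conclude that the entire evaluation map $X(k_v) \to \HH^1(k_v,G)$ lands in $U_v$. This follows only because any torsor of type $\lambda$ over $k_v$ is a twist of the hypothesized unramified one, so its evaluation map has image inside a single coset of $U_v$, and a coset meeting $U_v$ equals $U_v$. The paper's route avoids both difficulties: it first invokes the classical case $S=\Omega_k$ to obtain some global torsor $Y \to X$ of type $\lambda$, spreads it out so that it is unramified outside a finite $S' \supset S$, then uses the segment $\HH^1_{S'}(k,G) \to \prod_{v\in S'-S}\HH^1(k_v,G)/U_v \to \Sha^1_S(k,M)^*$ of the Poitou--Tate sequence together with~\eqref{brtorsor} to find a global class $\tau$ whose twist $Y^\tau$ has unramified fibre over $x_v$ for $v \in S'-S$, and finally applies the coset argument above. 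I recommend restructuring your backward direction along these lines rather than trying to repair the $\HH^2$ obstruction.
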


\begin{Remark}\label{rmk:m}\hfill
	\begin{enumerate}
		\item By Lemma~\ref{lem:spreading} the hypothesis is satisfied if $X$ spreads out to a smooth proper scheme and $\lambda$ spreads out to a morphism of smooth proper group schemes over $\Spec(\calO_{k,S})$. 
		\item The hypothesis is satisfied for $S = \Omega_k$, in which case the result is a crucial step in establishing the descent theory for abelian torsors (See (1) at the top of page 115 of \cite{TorsorsAndRationalPoints}). In this case the result holds more generally for $M = G^\vee$ of finite type. For general $S$ we must restrict to finite $M$ as the proof relies on the Poitou-Tate exact sequence which requires that $M$ is finite or that $S$ is cofinite.
	\end{enumerate}
\end{Remark}

\begin{proof}
	First suppose there exists a torsor $Y \to X$ of type $\lambda$ unramified outside $S$. For any $v \in \Omega_k$, any $x_v \in X(k_v)$ and any $\alpha \in \Sha^1_{S}(k,M)$ we have $Y_{x_v} \cup \alpha_v = 0$. For $v \in S$ this is because $\alpha_v = 0$ and for $v \notin S$ this is because $Y_{x_v}$ is unramified and hence orthogonal to $\alpha_v$. If $\beta\in \Bcyr_{\lambda,S}(X)$ corresponds to $\alpha$, then~\eqref{brtorsor} shows that $X(\A_k)^\beta = X(\A_k)$. This proves one direction of the lemma.
	
	For the converse, suppose $X(\A_k)^{\Bcyr_{\lambda,S}(X)} \ne \emptyset$. Then $X(\A_k)^{\Bcyr_\lambda(X)} \ne \emptyset$ and so there exists a torsor $Y \to X$ of type $\lambda$ (cf. Remark~\ref{rmk:m}). For some $S' \supset S$ with $S' - S$ finite, $G$ spreads out to a smooth proper group scheme $\calG$ over $\Spec(\calO_{k,S'})$ and $Y \to X$ spreads out to a torsor $\calY \to \calX$ under $\calG$ with $\calX$ smooth and proper over $\Spec(\calO_{k,S'})$. By Lemma~\ref{lem:spreading}, $Y \to X$ is unramified outside $S'$. 
	
	The Cartier dual $\calM$ of $\calG$ has generic fiber $M$ and 
	\[
		\Sha^1_{S}(k,M) = \ker\left( \HH^1(\calO_{k,S'},\calM) \to \prod_{v \in S} \HH^1(k_v,M) \times \prod_{v \in S'-S}\HH^1(k_v,M)/U_v^\perp\right)\,,
	\]
	where $U_v \subset \HH^1(k_v,G)$ is the unramified subgroup and $U_v^\perp \subset \HH^1(k_v,M)$ is its annihilator under the pairing~\eqref{eq:localpairing}. The generalized Poitou-Tate sequence \cite[Theorem 6.2]{Cesnavicius} gives
	\[	
		\HH^1_{S'}(k,G) \to \prod_{v \in S' - S}\HH^1(k_v,G)/U_v \to \Sha^1_{S}(k,M)^*\,.
	\]
	(We have omitted the factors at $v \in S$ in the central term as they are $\HH^1(k_v,G)/0^\perp = \HH^1(k_v,G)/\HH^1(k_v,G) = 0$.)
	Let $(x_v) \in X(\A_k) = X(\A_k)^{\Bcyr_{\lambda,S}(X)}$. Arguing as in the proof of Lemma~\ref{lem:BrlambdaS} using~\eqref{brtorsor} we obtain $\tau \in \HH^1(\calO_{k,S'},\calG)$ mapping to the image of $(Y_{x_v})_{v \in {S'-S}}$.

	We claim that the twist of $Y \to X$ by $\tau$ is unramified outside $S$. It is unramified outside $S'$ since both $Y \to X$ and $\tau$ are unramified outside $S'$. For $v \in S' - S$, we have $(Y^\tau)_{x_v} = Y_{x_v} - \tau_v \in U_v$. The image of the evaluation map $X(k_v) \to \HH^1(k_v,G)$ given by $Y^\tau\to X$ therefore intersects the unramified subgroup. By assumption there exists an unramified torsor $Y_v \to X_{k_v}$ of type $\lambda$ and the base change of $Y^\tau \to X$ to $k_v$ is a twist of this unramified torsor. It follows that the image of the evaluation map $X(k_v) \to \HH^1(k_v,G)$ given by $Y^\tau\to X$ lies in a coset of the unramified subgroup. As this coset has nonempty intersection with the unramified subgroup, it must be the unramified subgroup. So $Y \to X$ is unramified at $v \in S' - S$ as well.
\end{proof}

\begin{Lemma}\label{lem:Sassumptions}
	Suppose $S \subset \Omega_k$ is such that 
	\begin{enumerate}
		\item\label{assumption1} there exists a torsor $\pi:Y \to X$ of type $\lambda$ unramified outside $S$,
		\item\label{assumption2} for all torsors $\pi : Y \to X$ of type $\lambda$ which are unramified outside $S$ and locally soluble at all primes $v \in S$, we have $Y(\A_k) \ne \emptyset$.
	\end{enumerate}
	Then the conclusion of Theorem~\ref{thm:BrlambdaS} holds.
\end{Lemma}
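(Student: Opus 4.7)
Since $\Br_{\lambda,S}(X) \subseteq \Br_\lambda(X)$, one has $X(\A_k)^{\Br_\lambda(X)} \subseteq X(\A_k)^{\Br_{\lambda,S}(X)}$, and the corresponding containment of images in $\prod_{v \in S} X(k_v)$ is automatic. The content of the lemma is the reverse direction, and my plan is the following: given any $(x_v)_v \in X(\A_k)^{\Br_{\lambda,S}(X)}$, I will construct an adelic point $(x'_v)_v \in X(\A_k)^{\Br_\lambda(X)}$ with $x'_v = x_v$ for every $v \in S$.

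First I would invoke assumption~\eqref{assumption1} to obtain a torsor $\pi \colon Y \to X$ of type $\lambda$ which is unramified outside $S$, and then apply Lemma~\ref{lem:BrlambdaS} to produce $\tau \in \HH^1_S(k,G)$ such that $x_v \in \pi^\tau(Y^\tau(k_v))$ for every $v \in S$. The twist $Y^\tau \to X$ is again a torsor of type $\lambda$, and at every place its evaluation map differs from that of $Y$ by the constant class $\tau_v$. So the key point to verify is that $Y^\tau$ remains unramified outside $S$, which reduces to showing that $\tau_v$ lies in the unramified subgroup $U_v \subset \HH^1(k_v,G)$ for every $v \notin S$. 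I handle this exactly as in the proof of Lemma~\ref{lem:BrlambdaS}: enlarge $S$ to some $S' \supset S$ with $S' - S$ finite and $G$ spreading out to a smooth group scheme over $\Spec(\calO_{k,S'})$, so that $\tau_v$ is automatically unramified for $v \notin S'$, while for $v \in S' - S$ the generalized Poitou--Tate sequence quoted there forces $\tau_v \in U_v$ as well.

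With $Y^\tau$ in hand -- a torsor of type $\lambda$ unramified outside $S$ and locally soluble at every $v \in S$ -- assumption~\eqref{assumption2} yields $Y^\tau(\A_k) \neq \emptyset$. I would then choose an adelic lift $(y_v)_v \in Y^\tau(\A_k)$ with $\pi^\tau(y_v) = x_v$ for $v \in S$ (possible by the conclusion of Lemma~\ref{lem:BrlambdaS}) and $y_v$ arbitrary in $Y^\tau(k_v)$ for $v \notin S$, and set $x'_v := \pi^\tau(y_v)$. For any $\beta \in \Br_\lambda(X)$ corresponding to $\alpha \in \HH^1(k,M)$, the identity~\eqref{brtorsor} applied to the torsor $Y^\tau$ gives $\inv_v(\beta(x'_v)) = \inv_v\bigl((Y^\tau)_{x'_v} \cup \alpha_v\bigr)$ at every place, and this vanishes because the fiber $(Y^\tau)_{x'_v}$ contains the $k_v$-point $y_v$ and is therefore trivial in $\HH^1(k_v,G)$. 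Summing yields $(x'_v)_v \in X(\A_k)^{\Br_\lambda(X)}$, and by construction its $S$-coordinates agree with those of $(x_v)_v$. The main subtlety, flagged above, is the passage between the abstract definition of $\HH^1_S(k,G)$ via orthogonality and the concrete unramifiedness of $\tau_v$ at the bad primes of $G$ lying outside $S$; everything else follows by reassembling the tools already developed for Lemma~\ref{lem:BrlambdaS} together with the standard descent identity~\eqref{brtorsor}.
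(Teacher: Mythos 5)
Your proposal is correct and follows essentially the same route as the paper: obtain $\tau \in \HH^1_S(k,G)$ from Lemma~\ref{lem:BrlambdaS}, check that the twist $Y^\tau$ is unramified outside $S$ and soluble on $S$, invoke assumption~\eqref{assumption2}, and push an adelic point of $Y^\tau$ down to $X$ matching $(x_v)$ on $S$. The only cosmetic difference is that at the last step the paper cites the descent-theoretic equality $\bigcup_\tau \pi^\tau(Y^\tau(\A_k)) = X(\A_k)^{\Br_\lambda(X)}$ from \cite[Theorem 6.1.2]{TorsorsAndRationalPoints}, whereas you verify the (easy) inclusion directly via~\eqref{brtorsor}; both are fine.
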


\begin{proof}
	Since $ X(\A_k)^{\Br_\lambda(X)}\subset X(\A_k)^{\Br_{\lambda,S}(X)}$ it suffices to prove that
	\[
		\rho_S\left(X(\A_k)^{\Br_{\lambda,S}(X)} \right) \subset  \rho_S\left(X(\A_k)^{\Br_\lambda(X)}\right)\,,
	\]
	where $\rho_S$ denotes the projection map $\rho_S : X(\A_k) = \prod_{v \in \Omega_k} X(k_v) \to \prod_{v\in S} X(k_v)$. So let $(x_v)_{v\in S} \in \rho_S\left(X(\A_k)^{\Br_{\lambda,S}(X)} \right)$. By Lemma~\ref{lem:BrlambdaS} there exists $\tau \in \HH^1_S(k,G)$ such that $x_v \in \pi^\tau(Y^\tau(k_v))$ for all $v \in S$. By assumption~\eqref{assumption2}, $Y^\tau(\A_k) \ne \emptyset$. In particular, there exists $y = (y_v)_{v\in \Omega_k} \in Y^\tau(\A_k)$ such that $\pi^\tau(y_v) = x_v$ for $v \in S$. In other words, $(x_v)_{v \in S} = \rho_S(\pi^\tau(y))$. On the other hand
	\[
		\pi^\tau(y) \in \bigcup_{\tau \in \HH^1_S(k,G)} \pi^\tau(Y^\tau(\A_k)) \subset \bigcup_{\tau \in \HH^1(k,G)} \pi^\tau(Y^\tau(\A_k)) = X(\A_k)^{\Br_\lambda(X)}\,,
	\]
	where the final equality is by \cite[Theorem 6.1.2]{TorsorsAndRationalPoints}. So $(x_v)_{v \in S} \in \rho_S\left(X(\A_k)^{\Br_\lambda(X)}\right)$ as required.
\end{proof}

\begin{proof}[Proof of Theorem~\ref{thm:BrlambdaS}]
	Suppose the type map $\lambda : M \to \Pic(\Xbar)$ factors as $M \twoheadrightarrow M_0 \hookrightarrow \Pic(\Xbar)$. Then $G_0 := M_0^\vee \subset G$ and $\Ybar \to \Xbar$ is a disjoint union of torsors $\Ybar_0 \to \Xbar$ under $\overline{G}_0$ (which are torsors of type $\lambda_0 : M_0 \to \Pic(\Xbar)$). The Weil conjectures (or the earlier result of Lang-Weil) give an explicitly computable bound $B$ depending only on $\Ybar_0 \to \Xbar$ such that if $v \in \Omega_k$ has residue cardinality larger than $B$ and $\calY_0 \to \Spec(\calO_v)$ is smooth and has geometric generic fiber isomorphic to $\Ybar_0$, then $\calY_0(\F_v) \ne \emptyset$. By Hensel's lemma $\calY_0(k_v) \ne \emptyset$ as well. 

	There is a finite set of primes $S \subset \Omega_k$ containing all archimedean primes such that $X$ spreads out to smooth proper scheme $\calX$, $G$ and $M$ spread out to Cartier dual smooth group schemes $\calG$ and $\calM$ and $\lambda$ spreads out to a morphism $\calM \to \Pic_{\calX/\calO_{k,S}}$ of group schemes over $\Spec(\calO_{k,S})$, where $\Pic_{\calX/\calO_{k,S}}$ is the (smooth and proper) relative Picard scheme whose existence follows from the fact that $\calX$ is smooth and proper. Enlarge $S$ to ensure that
	\begin{enumerate}
		\item[(i)]\label{ietsha1} $\Sha^1_{S}(k,G/G_0) = \Sha^1(k,G/G_0)$,
		\item[(ii)]\label{three} $S$ includes all primes of residue cardinality up to the bound $B$.
	\end{enumerate}
	To see that (i) is possible, note that as $\Sha^1_S(k,G/G_0) - \Sha^1(k,G/G_0)$ is finite there is a finite set of primes $S'$ containing $S$ such that
	\[
		\Sha^1(k,G/G_0) = \ker\left( \Sha^1_S(k,G/G_0) \to \prod_{v \in S'} \HH^1(k_v,G/G_0)\right)\,.
	\]
	This kernel is $\Sha^1_{S'}(k,G/G_0)$.
	
	As noted in Remark~\ref{rmk:m}, the hypothesis of Lemma~\ref{lem:existenceLambda} is satisfied for $S$. Since $\Bcyr_{\lambda,S}(X) \subset \Br_{\lambda,S}(X)$ we may assume that there is a torsor $Y \to X$ of type $\lambda$ unramified outside $S$ (otherwise both sets in the statement of the theorem are empty by Lemma~\ref{lem:existenceLambda}). It will therefore suffice to verify that the second hypothesis of Lemma~\ref{lem:Sassumptions} is satisfied. 
	
	So, suppose $Y \to X$ is an unramified outside $S$ torsor of type $\lambda$ for which $Y(k_v) \ne \emptyset$ for $v \in S$. The scheme of connected components $\pi_0(Y)$ is a torsor under $G/G_0$ and the assumptions imply that $\pi_0(Y) \in \Sha^1_S(k,G/G_0) = \Sha^1(k,G/G_0)$. Thus $\pi_0(Y)$ is everywhere locally soluble. In particular, for any $v \not\in S$, $Y_v$ has a geometrically irreducible component defined over $k_v$ which is an $X_v$-torsor under $G_0$. This spreads out to a smooth scheme $\calY_0 \to \Spec(\calO_v)$ whose generic fiber is geometrically isomorphic to $\Ybar_0$. By~\eqref{three} and the discussion in the first paragraph we conclude that $Y(k_v) \ne \emptyset$. Thus, the second hypothesis of Lemma~\ref{lem:Sassumptions} is satisfied.
\end{proof}

\section{The Brauer-Manin obstruction for hyperelliptic curves}\label{sec:4}

Let $k$ be a field of characteristic $0$. Given a hyperelliptic curve $C/k$ with Jacobian $J$, consider the type map $\lambda_0 : J[2] = \Pic(\Cbar)[2] \subset \Pic(\Cbar)$. The main result of \cite{CreutzViray} is to give explicit representatives (as corestrictions of quaternion algebras over the function field $k(C)$) for the elements in a subgroup $\Br_\Upsilon(C)$ of $\Br_{\lambda_0}(C)$ (See \cite[Proposition 5.1]{CreutzViray}). In this section we show that, when $k$ is a number field, this subgroup captures the obstruction to the Hasse principle coming from $\Br(C)[2]$. We then use the results of the previous section to give a computable description of this obstruction.

\subsection{The group $\Br_\Upsilon(C)$}
To begin let us give a new definition of $\Br_{\Upsilon}(C)$ as a subgroup of the form $\Br_\lambda(C)$ for appropriate $\lambda$. The quotient by the hyperelliptic involution defines a map $\rho:C \to \PP^1$ which we may assume is not ramified over $\infty$. Let $\frak{m} = \rho^*\infty \in \Div(C)$. The $2$-torsion of the generalized Jacobian $J_\frak{m} = \Jac(C_\frak{m})$ sits in an exact sequence
\begin{equation}\label{Jm2}
	1 \to \mu_2 \to J_{\frak{m}}[2] \to J[2] \to 0\,.
\end{equation}
Let $\lambda : J_\frak{m}[2] \to \Pic(\Cbar)$ be the composition of $\lambda_0$ with the surjective map $J_{\frak{m}}[2] \to J[2]$. Now define $\Br_\Upsilon(C) = \Br_\lambda(C)$. When $k$ is a number field and $S \subset \Omega_k$ contains all archimedean primes we define $\Br_{\Upsilon,S}(C) = \Br_{\lambda,S}(C)$. 

We now recall the definition in \cite{CreutzViray} and observe that it is equivalent (the group there is denoted $\Br_2^\Upsilon(C)$). The long exact sequence of Galois cohomology groups from~\eqref{Jm2} gives an exact sequence 
\[
	\HH^1(k,J_{\frak{m}}[2]) \to \HH^1(k,J[2]) \stackrel{\Upsilon}\to \Br(k)\,.
\]
In \cite{CreutzViray} the group is defined in as $r^{-1}\left((\lambda_0)_*(\ker(\Upsilon))\right)$, where $r : \Br_1(C)/\Br_0(C) \simeq \HH^1(k,\Pic(\Cbar))$ is the canonical map. By the exact sequence above, this is equal to $\Br_\lambda(C) = r^{-1}(\lambda_*(\HH^1(k,J_\frak{m}[2])))$.

\begin{Theorem}\label{thm:BrUps}
	Let $C$ be a hyperelliptic curve over a number field $k$. Then
	\[
		C(\A_k)^{\Br(C)[2]} = C(\A_k)^{\Br_{\lambda_0}(C)} = C(\A_k)^{\Br_{\Upsilon}(C)}\,.
	\]
\end{Theorem}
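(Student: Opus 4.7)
For the first equality $C(\A_k)^{\Br(C)[2]} = C(\A_k)^{\Br_{\lambda_0}(C)}$, the plan is to invoke the known equivalence between the $\Br(C)[2]$-Manin obstruction and the two-cover descent obstruction (as discussed in the introduction, going back to \cite{BruinStoll}; see also \cite{Stoll,CreutzViray}). The essential input is that the Brauer--Manin pairing factors through the injection $\Br(C)/\Br_0(C) \hookrightarrow \HH^1(k,\Pic(\Cbar))$ (valid because $\Br(\Cbar)=0$ for a curve), that $\Br_{\lambda_0}(C)$ has image $\lambda_{0*}(\HH^1(k,J[2]))$ by definition, and that the degree-$2$ rational divisor $\mathfrak{m}$ forces $[\Pic^1_C]$ to be $2$-torsion in $\HH^1(k,J)$, so that the exact sequence $0 \to J \to \Pic(\Cbar) \to \Z \to 0$ identifies $\HH^1(k,\Pic(\Cbar))[2]$ with $\lambda_{0*}(\HH^1(k,J[2]))$ inside $\Br(C)/\Br_0(C)$.

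The second equality $C(\A_k)^{\Br_{\lambda_0}(C)} = C(\A_k)^{\Br_{\Upsilon}(C)}$ is the substantive content. Since $\lambda = \lambda_0 \circ \pi$ with $\pi : J_\mathfrak{m}[2] \twoheadrightarrow J[2]$, we have $\Br_{\Upsilon}(C) = \Br_\lambda(C) \subset \Br_{\lambda_0}(C)$, so the inclusion $C(\A_k)^{\Br_{\lambda_0}(C)} \subset C(\A_k)^{\Br_{\Upsilon}(C)}$ is immediate. For the reverse direction I would apply Theorem~\ref{thm:BrlambdaS} to both types, reducing to the unramified-outside-$S$ subgroups for a common finite $S$, and then apply Lemma~\ref{lem:BrlambdaS} to $\lambda$: given $(x_v) \in C(\A_k)^{\Br_{\Upsilon,S}(C)}$ the lemma produces $\tilde\tau \in \HH^1_S(k, J_\mathfrak{m}[2])$ such that $x_v$ lies in the image of $\tilde Y^{\tilde\tau}(k_v)$ for $v \in S$, where $\tilde Y \to C$ is a fixed type-$\lambda$ torsor. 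The goal is to produce $\tau \in \HH^1_S(k, J[2])$ and a type-$\lambda_0$ torsor $Y \to C$ playing the analogous role, so that Lemma~\ref{lem:BrlambdaS} applied in the other direction places $(x_v)$ in $C(\A_k)^{\Br_{\lambda_0,S}(C)}$.

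The crucial structural input is that $\tilde Y$ is \emph{not} geometrically connected: because $\ker\lambda = \mu_2$, the two geometric components of $\tilde Y$ are each torsors of type $\lambda_0$. At each $v \in S$, $x_v$ lies in exactly one component, giving a local class $\tau_v \in \HH^1(k_v,J[2])$ which equals $\pi_*(\tilde\tau)_v$ on one component and differs from it by a prescribed element on the other. Twisting $\tilde\tau$ by a class in $\HH^1_S(k,\mu_2)$ (an $S$-unit modulo squares) leaves $\pi_*(\tilde\tau)$ unchanged but reshuffles which component each $x_v$ occupies. I would then combine this flexibility with the Poitou--Tate exact sequence that powers Lemma~\ref{lem:BrlambdaS} to coherently realign the component choices, taking $\tau := \pi_*(\tilde\tau)$ as the global witness. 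This is precisely the realization of fake Selmer elements as geometrically disconnected torsors advertised in the introduction.

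The main obstacle I expect is the case in which Galois swaps the two geometric components of $\tilde Y$: then no $k$-rational component exists, the component assignment is only defined over a quadratic extension $L/k$, and the realignment must be carried out over $L$ and then descended, with the Galois action on $J_\mathfrak{m}[2]/J[2] = \mu_2$ controlling what survives the descent. A secondary technical point is keeping track of ramification throughout so that the resulting $\tau$ lands in $\HH^1_S(k,J[2])$ rather than merely $\HH^1(k,J[2])$, which should be handled by the spreading-out setup already built into Theorem~\ref{thm:BrlambdaS}.
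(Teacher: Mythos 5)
Your handling of the first equality matches the paper (it is quoted from \cite[Corollary 6.2]{CreutzViray}), so the issue is the second equality, where your plan diverges from the paper's argument and leaves a gap at the decisive step. The paper does not reduce to a finite set $S$ at all: it applies the full descent theorem \cite[Theorem 6.1.2]{TorsorsAndRationalPoints} to both $\lambda_0$ and $\lambda$, writing each Brauer set as the union of $\pi(Y(\A_k))$ over \emph{all} torsors of the respective type, and then matches the two unions directly. Your detour through Theorem~\ref{thm:BrlambdaS} and Lemma~\ref{lem:BrlambdaS} cannot deliver the stated conclusion: Theorem~\ref{thm:BrlambdaS} only controls the images of the Brauer sets in $\prod_{v\in S}C(k_v)$, so even a successful ``realignment'' over $S$ would show at best that $C(\A_k)^{\Br_{\lambda_0}(C)}$ and $C(\A_k)^{\Br_{\Upsilon}(C)}$ have the same image there, not that they coincide as subsets of $C(\A_k)$. (A small but symptomatic slip: the twisting classes produced by Lemma~\ref{lem:BrlambdaS} live in $\HH^1_S(k,G)$ with $G=\calJ[2]=J_{\mathfrak{m}}[2]^\vee$, not in $\HH^1_S(k,J_{\mathfrak{m}}[2])$; correspondingly the reshuffling of components is governed by the image of the connecting map $d:\Z/2\to\HH^1(k,J[2])$ in diagram~\eqref{eq:maindiagram}, not by classes in $\HH^1_S(k,\mu_2)$.)

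The two points you explicitly defer are exactly where the content lies, and the paper resolves them differently from what you propose. First, the case in which Galois permutes the two geometric components is not handled by realigning over a quadratic extension and descending: such a torsor is connected over $k$ but geometrically disconnected, so its scheme of connected components is a nontrivial $\Z/2$-torsor, and since $\Sha^1(k,\Z/2)=0$ it has no adelic points whatsoever; these torsors simply contribute nothing to the union for $\lambda$. Second, the ``coherent realignment'' you invoke is precisely the surjectivity of $\Sel^2(C/k)\to\Sel^2_{\textup{fake}}(C/k)$ discussed in Remark~\ref{rmk:fakeSel}, and the input that makes it work is that $(Y_0,\pi_0)$ and $(Y_0,\iota\circ\pi_0)$ have the \emph{same underlying curve}, only the structure maps to $C$ differing by $\iota$: a disconnected type-$\lambda$ torsor is the disjoint union of these two, so its local (and adelic) solubility is equivalent to that of the single type-$\lambda_0$ torsor $(Y_0,\pi_0)$, and a local point of $C$ lifting to the big torsor lifts to $Y_0$ after possibly applying $\iota$. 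Without isolating this fact, the appeal to Poitou--Tate does not by itself produce the global class $\tau\in\HH^1(k,J[2])$ you need, because the componentwise choices $\epsilon_v\in\{0,1\}$ need not patch to a global twist a priori.
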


\begin{proof}
The first equality is a special case of \cite[Corollary 6.2]{CreutzViray}. We must prove the second. As noted above $\Br_{\Upsilon}(C) = \Br_{\lambda}(C)$, where $\lambda : J_{\frak{m}}[2] \to J[2] \subset \Pic(\Cbar)$ is the composition of the quotient map in~\eqref{Jm2} with the inclusion $\lambda_0 : J[2] \subset \Pic(\Cbar)$.

A torsor $\pi:Y \to C$ of type $\lambda$ is a $C$-torsor under the finite group scheme $(J_\frak{m}[2])^\vee$, which is isomorphic to $\calJ[2] := \left(\Pic(\Cbar)/\langle \frak{m}\rangle\right)[2]$, see \cite[Proposition 2.8]{GeneralizedJacs}. The sequence~\eqref{Jm2} and its dual induce the vertical maps in the following commutative diagram.
\[
	\xymatrix{
	\HH^1(C,J[2]) \ar[r]^-{\textup{type}}\ar[d]& \Hom(J[2],\Pic(\Cbar)) \ar[d] \\
	\HH^1(C,\calJ[2]) \ar[r]^-{\textup{type}} \ar[d] &\Hom(J_{\frak{m}}[2],\Pic(\Cbar)) \ar[d] & \lambda \ar@{|->}[d] \\
	\HH^1(C,\Z/2\Z)) \ar[r]^-{\textup{type}} & \Hom(\mu_2,\Pic(\Cbar)) & 0 
	}
\]
(In the top left term we have used $J[2]^\vee \simeq J[2]$.) A $C$-torsor under $\Z/2\Z$ whose type is the $0$ map is a union of two copies of $C$ permuted by Galois and may be identified with a class in $\HH^1(k,\Z/2\Z)$. The image of a torsor $Y \to X$ of type $\lambda$ under the map $\HH^1(C,\calJ[2]) \to \HH^1(C,\Z/2\Z)$ is the scheme of connected components of $Y$, which geometrically is a pair of points. A connected torsor $(Y,\pi)$ of type $\lambda$ must have $Y(\A_k) = \emptyset$, since it is geometrically disconnected. Exactness of the first column in the diagram shows that the disconnected torsors of type $\lambda$ are precisely those that are the image of a torsor of type $\lambda_0$.

A torsor $\pi_0:Y_0 \to C$ of type $\lambda_0$ can be composed with the hyperelliptic involution $\iota : C \to C$ to obtain another torsor $\iota \circ \pi_0 : Y_0 \to C$ of type $\lambda_0$. The image of $(Y_0,\pi_0)$ under the map $\HH^1(C,J[2]) \to \HH^1(C,\calJ[2])$ is the disjoint union of $(Y_0,\pi_0)$ and $(Y_0,\iota\circ \pi_0)$. From this it follows that
\[
	\bigcup_{\substack{(Y_0,\pi_0) \in \HH^1(C,J[2])\,, \\ \textup{type}(Y_0,\pi_0)= \lambda_0}} \pi_0(Y_0(\A_k)) = \bigcup_{\substack{(Y,\pi) \in \HH^1(C,\calJ[2])\,, \\ \textup{type}(Y,\pi)= \lambda}} \pi(Y(\A_k))\,.
\]
By descent theory \cite[Theorem 6.1.2]{TorsorsAndRationalPoints} the term on the left is equal to $C(\A_k)^{\Br_{\lambda_0}(X)}$ and the term on the right is equal to $C(\A_k)^{\Br_{\lambda}(X)}=C(\A_k)^{\Br_{\Upsilon}(X)}$.
\end{proof}

\begin{Remark}\label{rmk:fakeSel}
	The restriction of the map $\HH^1(C,J[2]) \to \HH^1(C,\calJ[2])$ to the subset $\Sel^2(C/k)$ of locally soluble torsors is essentially the map $\Sel^2(C/k) \to \Sel^2_\textup{fake}(C/k)$ in \cite[Section 3]{BruinStoll}. The proof of the theorem relies on the fact that this map is surjective. This follows from the fact that the underlying curves of the torsors $(Y_0,\pi_0)$ and $(Y_0,\iota\circ\pi_0)$ are isomorphic.
\end{Remark}

\subsection{The $\mu$-map}\label{sec:mumap}
	Suppose that $C$ is defined by $y^2 = f(x)$ with $f(x) \in k[x]$ separable of even degree. We denote the leading coefficient of $f(x)$ by $c$. Define $L = k[x]/\langle f(x) \rangle$ and let $\theta \in L$ denote the image of $x$. Consider the $\mu$ map defined in \cite[Section 2]{BruinStoll}
	\[
		\mu : C(k) \to L^\times/k^\times L^{\times 2}\,,
	\] 
	which for a point $P = (a,b)$ with $b \ne 0$ is defined as $\mu(P) = (a-\theta)k^\times L^{\times 2}$. Let us further define $\frak{L}_1 = \ker\left( N_{L/k} : L^\times/k^\times L^{\times 2} \to k^{\times}/k^{\times 2}\right)$ and let $\frakL_c$ denote the coset of $\frakL_1$ of elements whose norm lies in $ck^{\times 2}$ (This set is denoted $H_k$ in \cite{BruinStoll}). Because $f(x) = cN_{L/k}(x-\theta)$, the image of $\mu$ is contained in $\frak{L}_c$.

\subsection{Torsors of the form $Y_\delta$}
As shown in \cite[Section 2]{BruinStoll} corresponding to any $\delta \in \frak{L}_c$ is a pair of two-coverings of $C$ over $k$ whose union gives a torsor $Y_\delta \to C$ of type $\lambda$. (The construction gives a pair of two-coverings because the morphism to $C$ can be composed with the hyperelliptic involution - see the bottom of page 2351 of \cite{BruinStoll}.)

\subsection{Cohomological interpretation}
	
	The group scheme $\calJ[2]$ sits in a short exact sequence $ 1 \to \mu_2 \to \Res_{L/k}(\mu_2) \to \calJ[2] \to 0$ identifying it with $\Res_{L/k}(\mu_2)/\mu_2$, where $\Res_{L/k}$ denotes the restriction of scalars functor taking $L$-schemes to $k$-schemes. The corresponding long exact sequence of Galois cohomology groups together with Hilbert's theorem 90 yields the exact sequence
\[
	k^\times/k^{\times 2} \to \left[ \HH^1(k,\Res_{L/k}(\mu_2)) = L^\times/L^{\times 2} \right]  \to \left[\HH^1(k,\Res_{L/k}(\mu_2)/\mu_2) = \HH^1(k,\calJ[2])\right]\,.
\]
 This fits together with the cohomology sequence of~\eqref{Jm2} to form the following commutative diagram with exact rows and columns (which is essentially \cite[(2.11)]{GeneralizedJacs} or \cite[(12)]{PoonenSchaefer} specialised to the present context).
		\begin{equation}\label{eq:maindiagram}
			\xymatrix{
				& k^\times/k^{\times 2}  \ar@{=}[r] \ar[d] &  k^\times/k^{\times 2} \ar[d] \\
				\Z/2 \ar[r]^-{d'}\ar@{=}[d]& \HH^1(k,J_{\frak{m}}[2]) \ar[r]\ar[d] & L^\times/L^{\times 2} \ar[r]^-{N_{L/k}} \ar[d] & k^{\times}/k^{\times 2} \ar@{=}[d] \\
				\Z/2 \ar[r]^-d &\HH^1(k,J[2]) \ar[d]^-\Upsilon \ar[r]& \HH^1(k,\calJ[2]) \ar[d]^-{\Upsilon'} \ar[r]^-N & k^{\times}/k^{\times 2}\\
				& \Br(k) \ar@{=}[r] & \Br(k)
			}
		\end{equation}
In particular, there is an injective map $L^\times/k^{\times}L^{\times 2} \hookrightarrow \HH^1(k,\calJ[2])$.

\subsection{Pairings}

The dualities $J_\frak{m}[2] = \calJ[2]^\vee$ and $\Res_{L/k}(\mu_2) = \Res_{L/k}(\Z/2)^\vee \simeq \Res_{L/k}(\mu_2)^\vee$ induce cup product pairings in Galois cohomology. Together with the maps in~\eqref{eq:maindiagram} and the identification $\Br(k) = \HH^2(k,\G_m)$ we obtain a commutative diagram of pairings:
		\begin{equation}\label{pairings}
	\begin{array}[c]{cccccc}
			\HH^1(k,J_\frak{m}[2]) & \times & \HH^1(k,\calJ[2])  & \to & \Br(k) \\
			\rotatebox{90}{$\twoheadleftarrow$} && \rotatebox{90}{$\hookrightarrow$} && \rotatebox{90}{$=$}\\
			(L^\times / L^{\times 2})_{N = 1}& \times & L^\times/k^\times L^{\times 2} & \to & \Br(k)\\
			\rotatebox{90}{$\hookleftarrow$} && \rotatebox{90}{$\twoheadrightarrow$} && \rotatebox{90}{$=$}\\
			L^\times / L^{\times 2}& \times & L^\times/L^{\times 2} & \to & \Br(k)
	\end{array}
	\end{equation}
	Here the subscript $N=1$ is used to indicate the kernel of the norm map. The bottom pairing is given explicitly by $\langle \ell, \ell' \rangle = \Cor_{L/k}(\ell,\ell')_L\,,$ where $(\ell,\ell')_L$ denotes the quaternion algebra over $L$ determined by $\ell$ and $\ell'$. That this induces a well defined pairing in the middle row follows from the fact that $\Cor_{L/k}(\ell,a)_L = (N_{L/k}(\ell),a)_k\in \Br(k)$, for any $a \in k^\times$ and $\ell \in L^\times$ (see \cite[Proposition 3.4.10(3)]{GS-csa}).
	
	\begin{Lemma}\label{lem:evaldelta}
		Let $\delta \in \frak{L}_c$ and $Y_\delta \to C$ be the corresponding torsor of type $\lambda$. The image of the evaluation map $C(k) \to \HH^1(k,\calJ[2])$ given by $Y_\delta \to C$ is $\delta^{-1}\mu(C(K)) \subset L^\times/K^{\times}L^{\times 2} \subset \HH^1(k,\calJ[2])$.
	\end{Lemma}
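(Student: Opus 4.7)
The plan is to identify the cohomology class of $Y_\delta$ on the generic fibre of $C$ and then specialise at $P$. Applying the construction of diagram~\eqref{eq:maindiagram} with the function field $k(C)$ in place of $k$ yields an injection
\[
	L(C)^\times/k(C)^\times L(C)^{\times 2} \hookrightarrow \HH^1(k(C),\calJ[2]).
\]
Restricting $[Y_\delta] \in \HH^1(C,\calJ[2])$ along the generic point $\eta : \Spec k(C) \to C$ gives a class in $\HH^1(k(C),\calJ[2])$, and the first key step is to show that this class is the image of $\delta^{-1}(x-\theta)$ under the injection above. Note that this element really does lie in $L(C)^\times/k(C)^\times L(C)^{\times 2}$: since $f(x) = c N_{L/k}(x-\theta)$ and $N_{L/k}(\delta) \equiv c \bmod k^{\times 2}$, we have $N_{L(C)/k(C)}(\delta^{-1}(x-\theta)) \equiv 1$.

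For the identification itself, I would match the explicit Bruin--Stoll equations for $Y_\delta$ (given by a $\mu_2$-torsor over $C_L$ defined by $\delta z^2 = x-\theta$, viewed via Weil restriction as a $\Res_{L/k}\mu_2$-torsor over $C$) with the cohomological class via the coboundary of
\[
	1 \to \mu_2 \to \Res_{L/k}\mu_2 \to \calJ[2] \to 0.
\]
The class of $Y_\delta$ in $\HH^1(C_L,\mu_2)$ is $\delta(x-\theta) \in L(C)^\times/L(C)^{\times 2}$; the coboundary/sign conventions for this exact sequence produce $\delta^{-1}(x-\theta)$ in the quotient by $k(C)^\times$, which is where the inverse of $\delta$ in the statement comes from.

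Once the generic picture is settled, the lemma follows by functoriality. For $P = (a,b) \in C(k)$ with $b \neq 0$, pulling back along $P : \Spec k \to C$ is compatible with the embeddings from diagram~\eqref{eq:maindiagram} over $k(C)$ and over $k$ (both constructed via Hilbert 90 and the exact sequence $1 \to \mu_2 \to \Res_{L/k}\mu_2 \to \calJ[2] \to 0$, which is natural in the base ring). The specialisation $k(C) \to k$, $x \mapsto a$, sends $\delta^{-1}(x-\theta)$ to $\delta^{-1}(a-\theta) = \delta^{-1}\mu(P)$, giving the claim. For Weierstrass points and (if present) points at infinity, where $a-\theta$ degenerates, the definition of $\mu(P)$ in \cite{BruinStoll} replaces $x-\theta$ by an equivalent rational representative whose value at $P$ is well-defined, and the same specialisation argument applies verbatim with this replacement.

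The main obstacle is the generic identification in the first two paragraphs — matching the explicit Bruin--Stoll description of $Y_\delta$ as a Weil restriction of a Kummer cover with the class of $\delta^{-1}(x-\theta)$ in $\HH^1(k(C),\calJ[2])$, and in particular verifying that the sign conventions for the coboundary produce $\delta^{-1}$ rather than $\delta$. Once this is pinned down, the rest is routine functoriality of the cohomology pullback.
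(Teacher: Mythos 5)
Your proof is correct in outline, but it takes a genuinely different route from the paper's. The paper argues entirely at the level of torsors and twists: the fiber class $\tau = (Y_\delta)_P$ is characterized as the unique class such that $P$ lifts to the twist $(Y_\delta)^\tau$; by the Bruin--Stoll construction $P$ lifts to $Y_{\delta''}$ if and only if $\mu(P) = \delta''$, and the twist of $Y_\delta$ by $\delta' \in \frak{L}_1$ is $Y_{\delta\delta'}$; combining these gives $\tau = \mu(P)/\delta$ in three lines, with no cocycle or sign bookkeeping. You instead identify the restriction of $[Y_\delta]$ to the generic point as the class of $\delta^{-1}(x-\theta)$ in $L(C)^\times/k(C)^\times L(C)^{\times 2}$ and then specialize at $P$. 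That is a valid and more explicit strategy (it is essentially how the evaluation of the algebras $\calA_\ell$ is handled in Lemma~\ref{lem:AP}), and your norm computation $N_{L(C)/k(C)}(\delta^{-1}(x-\theta)) \equiv y^2 c^{-2} \equiv 1$ correctly places the generic class in the image of $\HH^1(k(C),J_{\frak{m}}[2])$, consistent with the type being $\lambda$. What the paper's approach buys is that it completely sidesteps the step you flag as the ``main obstacle'': the generic identification and its sign conventions never need to be made precise, because the two inputs it uses are exactly the properties of $Y_\delta$ that Bruin--Stoll establish by construction. Note also that your worry about $\delta^{-1}$ versus $\delta$ is vacuous: all the groups in play are killed by $2$, so $\delta^{-1} \equiv \delta$ in $L^\times/L^{\times 2}$ and $\mu(P)/\delta = \delta^{-1}\mu(P) = \delta\,\mu(P)$. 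Your treatment of Weierstrass points and points at infinity by substituting the Bruin--Stoll representative for $\mu$ is fine; if you pursue your route you should fully carry out the coboundary computation in the second paragraph rather than leaving it as a sketch, since that is where all the content of your argument lives.
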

	
	\begin{proof}
		Evaluating the torsor $Y_\delta \to C$ at $P \in C(k)$ gives the class $\tau \in \HH^1(k,\calJ[2])$ of the fiber of $Y_\delta$ above $P$. Alternatively, $\tau$ is uniquely determined by the property that the point $P$ lifts to the twist of $Y_\delta$ by $\tau$. We will show that $\tau = \mu(P)/\delta$. By construction, a point $P \in C(k)$ lifts to $Y_\delta$ if and only if $\mu(P) = \delta$. Moreover, if $\delta' \in \frak{L}_1 \subset \HH^1(k,\calJ[2])$, then the twist of $Y_\delta$ by $\delta'$ is $Y_{\delta\delta'} \to C$. It follows that $P$ lifts to the twist of $Y_\delta \to C$ by $\mu(P)/\delta$ as required.
	\end{proof}

\subsection{Computable description of $C(\A_k)^{\Br(C)[2]}$}

\begin{Theorem}\label{thm:S1}
	Let $C : y^2 = f(x)$ be a locally soluble hyperelliptic curve over a number field $k$ of genus $g$ with $f(x) \in \calO_k[x]$. Suppose that $S \subset \Omega_k$ contains
	\begin{itemize}
	\item all archimedean primes,
	\item all primes above $2$,
	\item all primes that divide the leading coefficient of $f(x)$,
	\item all primes $v$ such that $\val_v(\disc(f(x))) \ge 2$,
\end{itemize}
	and that $\Sha^1_S(k,\mu_2) = 0$. Then
	\begin{enumerate}
		\item\label{it1} An adelic point $(P_v) \in C(\A_k)$ lies in $C(\A_k)^{\Br_{\Upsilon,S}(X)}$ if and only if there exists a two-covering $\pi:Y \to C$ unramified outside $S$ such that for all $v \in S$, $\pi(Y(k_v))$ contains either $P_v$ or $\iota(P_v)$. In particular, $C(\A_k)^{\Br_{\Upsilon,S}(X)} \ne \emptyset$ if and only if there exists a two-covering of $C$ that is unramified outside $S$ and soluble at all primes of $S$.
		\item\label{it2} If $S$ contains all primes whose residue cardinality $q$ satisfies
\[
	\sqrt{q} + \frac{1}{\sqrt{q}} \le 2(2^{2g}(g-1)+1)\,,
\]
	  then
	  \[
	  	C(\A_k)^{\Br_{\Upsilon,S}(C)} = \emptyset \quad \Leftrightarrow \quad C(\A_k)^{\Br(C)[2]} = \emptyset \quad \Leftrightarrow \quad \Sel^2(C/k) = \emptyset\,.
	  \]
	  \end{enumerate}
\end{Theorem}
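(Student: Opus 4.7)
The approach is to combine Theorem~\ref{thm:BrUps} (which gives $C(\A_k)^{\Br(C)[2]} = C(\A_k)^{\Br_\Upsilon(C)} = C(\A_k)^{\Br_\lambda(C)}$ with $\lambda : J_\frak{m}[2]\to\Pic(\Cbar)$ and $G := \calJ[2] = J_\frak{m}[2]^\vee$) with the torsor-theoretic descriptions from Lemma~\ref{lem:BrlambdaS} and Lemma~\ref{lem:Sassumptions}. The hypotheses on $S$ are designed so that $C$, $G$, and $\lambda$ spread out appropriately over $\Spec(\calO_{k,S})$ (with good reduction when $\val_v(\disc f) = 0$ and simple nodal reduction when $\val_v(\disc f) = 1$), so that Lemma~\ref{lem:spreading} produces unramified $C_{k_v}$-torsors of type $\lambda$ at every $v\notin S$. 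Combined with $C(\A_k) \ne \emptyset$, this verifies the hypothesis of Lemma~\ref{lem:existenceLambda}, which then produces a torsor $\pi : Y \to C$ of type $\lambda$ unramified outside $S$ whenever $C(\A_k)^{\Bcyr_{\lambda,S}(C)}$ is nonempty.

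For Part~\eqref{it1}, I would apply Lemma~\ref{lem:BrlambdaS} to such a base torsor $Y$: $(P_v) \in C(\A_k)^{\Br_{\Upsilon,S}(C)}$ iff there is $\tau \in \HH^1_S(k, G)$ with $P_v \in \pi^\tau(Y^\tau(k_v))$ for all $v \in S$, where $Y^\tau$ is again a torsor of type $\lambda$ unramified outside $S$. The key step is to identify $Y^\tau$ with (essentially) a two-covering, using the dual of~\eqref{Jm2}, $1 \to J[2] \to \calJ[2] \to \mu_2 \to 1$: the scheme of geometric components $\pi_0(Y^\tau)$ is a $k$-torsor under $\mu_2$ unramified outside $S$, and the hypothesis that either $P_v$ or $\iota(P_v)$ lies in $\pi^\tau(Y^\tau(k_v))$ forces $Y^\tau(k_v)\ne\emptyset$ for each $v \in S$, making $\pi_0(Y^\tau)$ locally trivial at every such $v$. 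Thus $\pi_0(Y^\tau) \in \Sha^1_S(k,\mu_2) = 0$, so $Y^\tau$ is geometrically disconnected and, by the analysis in the proof of Theorem~\ref{thm:BrUps}, decomposes as $(Y_0,\pi_0)\sqcup (Y_0,\iota\circ\pi_0)$ for a two-covering $\pi_0 : Y_0 \to C$ which inherits being unramified outside $S$. Then $\pi^\tau(Y^\tau(k_v)) = \pi_0(Y_0(k_v)) \cup \iota(\pi_0(Y_0(k_v)))$ yields the condition on $P_v$ or $\iota(P_v)$. The converse direction is immediate: from a two-covering satisfying the local condition, form the disjoint union $Y_0 \sqcup Y_0 \to C$ and apply Lemma~\ref{lem:BrlambdaS} with the trivial twist.

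For Part~\eqref{it2}, Theorem~\ref{thm:BrUps} combined with the descent identity $C(\A_k)^{\Br_{\lambda_0}(C)} = \bigcup_{Y_0 \in \Sel^2(C/k)} \pi_0(Y_0(\A_k))$ from~\cite[Theorem 6.1.2]{TorsorsAndRationalPoints} gives $C(\A_k)^{\Br(C)[2]} = \emptyset \Leftrightarrow \Sel^2(C/k) = \emptyset$, so it remains to show $C(\A_k)^{\Br_\Upsilon(C)} = \emptyset \Leftrightarrow C(\A_k)^{\Br_{\Upsilon,S}(C)} = \emptyset$. This follows from Lemma~\ref{lem:Sassumptions} once hypothesis~\eqref{assumption2} is verified. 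By the analysis of Part~\eqref{it1} every $S$-soluble unramified-outside-$S$ torsor of type $\lambda$ takes the form $Y_0 \sqcup Y_0$ for a two-covering $Y_0 \to C$; since $Y_0 \to C$ is étale of degree $2^{2g}$, Riemann-Hurwitz gives $Y_0$ smooth of genus $g' = 2^{2g}(g-1)+1$, and for $v \not\in S$ the Hasse-Weil bound $\#\calY_0(\F_v) \ge q + 1 - 2g'\sqrt{q}$ together with $\sqrt{q}+1/\sqrt{q} > 2g'$ yields $\calY_0(\F_v) \ne \emptyset$, which Hensel lifts to $Y_0(k_v)\ne\emptyset$. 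The main technical obstacle I anticipate is the treatment of simple-nodal reduction primes $v \not\in S$ with $\val_v(\disc f) = 1$, where $C$ does not spread out smoothly: Lemmas~\ref{lem:spreading}--\ref{lem:Sassumptions} must be applied via a regular or semistable model, and the Hasse-Weil bound applied to the smooth locus of the reduction of $Y_0$, while still producing an $\F_v$-point amenable to Hensel lifting.
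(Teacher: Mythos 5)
Your overall architecture matches the paper's (Lemmas~\ref{lem:BrlambdaS}, \ref{lem:existenceLambda} and~\ref{lem:Sassumptions}, plus $\Sha^1_S(k,\mu_2)=0$ to split a type-$\lambda$ torsor into a pair of two-coverings exchanged by $\iota$), but there is a genuine gap exactly at the point you flag as an ``anticipated obstacle'': the odd primes $v\notin S$ with $\val_v(\disc(f))=1$. The whole point of Theorem~\ref{thm:S1}, as opposed to Theorem~\ref{thm:BrlambdaS}, is that $S$ may omit these nodal primes. At such $v$ the curve $C$, hence any two-covering $Y_0$, has no smooth proper model over $\calO_v$, so Lemma~\ref{lem:spreading} does not supply the unramified local torsor needed to invoke Lemma~\ref{lem:existenceLambda} in Part~\eqref{it1}, and the Riemann--Hurwitz/Hasse--Weil count on ``$\calY_0(\F_v)$'' in your Part~\eqref{it2} has no meaning as stated. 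Passing to a regular or semistable model is not a routine fix: one must still determine \emph{which} twists of the covering acquire liftable smooth points, and that is where the real work lies.

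The paper avoids models at these primes entirely, working instead with the explicit torsors $Y_\delta$ and \cite[Lemma 4.3]{BruinStoll}. That lemma shows, for odd $v \nmid c$ with $\val_v(\disc(f))\le 1$, that the image of $\mu_v$ is unramified --- which via Lemma~\ref{lem:evaldelta} produces the unramified local torsors needed for Lemma~\ref{lem:existenceLambda} --- and, once $q$ exceeds the stated bound, that $Y_\delta(k_v)\ne\emptyset$ for every $\delta$ in the image of $\mu_v$ and that the evaluation image of $Y_\delta$ is exactly the unramified part of $\ker(\Upsilon')\cap\ker(N)$. The step your proposal is missing is to show that an arbitrary unramified-outside-$S$, $S$-soluble torsor $Y$ of type $\lambda$ is $k_v$-isomorphic to such a $Y_\delta$: the paper writes $Y=(Y_\delta)^\tau$, deduces from unramifiedness of $Y$ at $v$ that $\tau$ is unramified, uses that $d'(1)$ is unramified (because $\val_v(\disc(f))\le 1$ forces $f$ to have a root over $k_v^{\unr}$) to conclude $\Upsilon'(\tau)=\tau\cup d'(1)=0$, and then chases diagram~\eqref{eq:maindiagram} to realize $\tau$ as the class of an unramified $\delta'$ of square norm, so that $Y=Y_{\delta\delta'}$ with $\delta\delta'$ again unramified of norm in $ck_v^{\times 2}$. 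Without this identification, solubility of $Y$ at the omitted nodal primes --- and hence hypothesis~\eqref{assumption2} of Lemma~\ref{lem:Sassumptions} --- is not established.
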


\begin{proof}
		To prove~\eqref{it1}, first suppose there is a two-covering $\pi:Y \to C$ with the stated property. Composing with the hyperelliptic involution gives another such torsor and the union of these is a torsor of type $\lambda : J_\frak{m}[2] \to \Pic(\Xbar)$ that is unramified outside $S$ and which contains a lift of $P_v$ for every $v \in S$. By Lemma~\ref{lem:BrlambdaS} we conclude that $(P_v) \in C(\A_k)^{\Br_{\Upsilon,S}(C)}$. 
		
		For the converse, suppose $(P_v) \in C(\A_k)^{\Br_{\Upsilon,S}(C)} \ne \emptyset$. For $v \not\in S$ it follows from \cite[Lemma 4.3]{BruinStoll} that there exists an unramified $C_{k_v}$-torsor of type $\lambda$. Indeed, the lemma shows that the image of $\mu : C(k_v) \to L_v^\times/k_v^\times L_v^{\times 2} \subset \HH^1(k_v,\calJ[2])$ lies in the unramified subgroup. Given $\delta_v$ in the image, the corresponding torsor $Y_{\delta_v} \to C_v$ is unramified by Lemma~\ref{lem:evaldelta}. By Lemma~\ref{lem:existenceLambda} we thus conclude that there exists a $C$-torsor of type $\lambda$ that is unramified outside $S$ (note that $(P_v) \in C(\A_k)^{\Br_{\Upsilon,S}(C)} \subset C(\A_k)^{\Bcyr_{\lambda,S}(C)}$). This shows that the hypothesis of Lemma~\ref{lem:BrlambdaS} is satisfied and so we may conclude that there is a torsor of type $\lambda$ unramifed outside $S$ which contains a lift of $P_v$ for each $v \in S$. The scheme of connected components of this torsor represents an element of $\Sha^1_S(k,\mu_2)$. By assumption $\Sha^1_S(k,\mu_2) = 0$, so its geometric components are defined over $k$. These components are two-coverings of $C$ which differ by the hyperelliptic involution. If $P_v$ lifts to one of them, then $\iota(P_v)$ lifts to the other.
		
		Now let us prove~\eqref{it2}. It is well known that $\Sel^2(C) = \emptyset \Leftrightarrow C(\A_k)^{\Br_{\lambda_0}(C)}= \emptyset$ (see \cite[Theorem 6.1.2]{TorsorsAndRationalPoints}) and $C(\A_k)^{\Br_{\lambda_0}(C)} = C(\A_k)^{\Br(C)[2]} = C(\A_k)^{\Br_{\Upsilon}(C)}$ by Theorem~\ref{thm:BrUps}. If $C(\A_k)^{\Br_{\Upsilon,S}(C)}= \emptyset$, then all three sets in the statement are empty. So suppose $C(\A_k)^{\Br_{\Upsilon,S}(C)} \ne \emptyset$. By the preceding discussion it will suffice to show that $C(\A_k)^{\Br_{\Upsilon}(C)} \ne \emptyset$. For this we use Lemma~\ref{lem:Sassumptions}. Above we have shown that there is a torsor $Y \to C$ of type $\lambda : J_{\frak{m}}[2] \to \Pic(\Xbar)$ unramified outside $S$, verifying the first assumption in Lemma~\ref{lem:Sassumptions}. To verify the second suppose that $Y \to C$ is a torsor of type $\lambda$ unramified outside $S$ and soluble on $S$. Let $v \not\in S$. We must show that $Y(k_v) \ne \emptyset$. 
		
		For any $\delta \in \mu(C(k_v))$, the proof of \cite[Lemma 4.3]{BruinStoll} shows that the corresponding torsor $Y_{\delta} \to C_v$ has $Y_\delta(k_v) \ne \emptyset$ (This is where the bound on the residue cardinality of $v$ in the hypothesis is used). It will therefore suffice to show that our $Y$ above is isomorphic over $k_v$ to one of the form $Y_\delta$. Note that $Y\to C$ is a twist of $Y_\delta \to C$ by an element $\tau \in \HH^1(k_v,\calJ[2])$ in the image of $\HH^1(k_v,J[2])$, since both are disconnected $C$-torsors under $\calJ[2] = J_\frak{m}[2]^\vee$ defined over $k_v$. The computation in \cite[Lemma 4.3]{BruinStoll} shows that the image $I$ of the evaluation map $Y_\delta : C(k_v) \to \HH^1(k_v,\calJ[2])$ is equal to the set of elements in $\ker(\Upsilon') \cap \ker(N) = \frakL_{v,1}$ that are unramified, where $N$ and $\Upsilon'$ are the maps in~\eqref{eq:maindiagram} (again, using that the residue cardinality is sufficiently large). The image of the evaluation map corresponding to the twist $Y$ is the coset $\tau I$. Since $Y \to C$ is unramified at $v$, $\tau I$ is contained in the unramified subgroup of $\HH^1(k_v,\calJ[2])$, and so $\tau$ itself is unramified. By \cite[Lemma 2.10]{GeneralizedJacs} $\Upsilon'(\tau) = \tau \cup d'(1)$. Since $f(x)$ has discriminant of valuation $\le 1$, $f(x)$ has a root over $k_v^{\unr}$ (see the proof of~\cite[Lemma 4.3]{BruinStoll}). This implies that $d'(1)$ is unramified and, hence, that $\Upsilon'(\tau) = \tau \cup d'(1) = 0$ because the unramified subgroups are orthogonal. From the diagram~\eqref{eq:maindiagram} we conclude that $\tau$ is the image of some $\delta' \in L_v^\times/L_v^{\times 2}$ which is unramified and of square norm. Then $\delta\delta'$ is unramified with norm in $ck_v^{\times 2}$ and so $Y = (Y_\delta)^\tau = Y_{\delta\delta'}$ is of the required form.
\end{proof}

\begin{Remark}\label{rmk:notsobadv}
	Suppose $S$ is any set of primes as in the first part of Theorem~\ref{thm:S1}. Let $v \not\in S$, let $\delta \in L_v^\times$ be an unramified element of norm $c$ times a square and let $Y_\delta$ be the corresponding torsor of type $\lambda$. In the course of the proof we have shown the following: if the evaluation map corresponding to $Y_\delta$ surjects onto the set of unramified elements in the subgroup $\ker(\Upsilon') \cap \ker(N) \subset \HH^1(k_v,\calJ[2])$, then any torsor $Y \to C$ of type $\lambda$ which is unramified outside $S$ and soluble on $S$ is also soluble at $v$. For such $v$ it follows from Lemma~\ref{lem:BrlambdaS} that
	\[
		C(\A_k)^{\Br_{\Upsilon,S}(C)} \ne \emptyset \quad \Leftrightarrow \quad  C(\A_k)^{\Br_{\Upsilon,S\cup\{v\}}(C)} \ne \emptyset\,.
	\]
	The condition is satisfied at $v$ when the reside cardinality is larger than the bound in the statement of Theorem~\ref{thm:S1}, but it is typically also satisfied for many primes outside $S$ below the bound as well. This observation is quite useful in practice as it allows us to identify (and safely omit) such primes from the computation.
\end{Remark}

\begin{Remark}\label{Sha1mu2}
For any number field $k$, $\Sha^1(k,\mu_2) = 0$ by the Grunwald-Wang theorem. It follows (cf. the proof of Theorem~\ref{thm:BrlambdaS}) that one can always enlarge $S$ if necessary to ensure that $\Sha^1_S(k,\mu_2) = 0$ as in the hypothesis of Theorem~\ref{thm:S1}. In the case $k = \Q$, we have $\Sha^1_S(\Q,\mu_2) = 0$ for any $S$ containing the archimedean prime, but this is not true in general. For example, if $k = \Q(\sqrt{34})$, $v$ is the prime above $2$ and $S = \{ v, \infty_1,\infty_2 \}$, then $k_{v} = \Q_2(\sqrt{2})$ so the class of $2$ is a nontrivial element in $\Sha^1_{S}(k,\mu_2) \subset \HH^1(k,\mu_2) = k^{\times}/k^{\times 2}$. Note however, that $3$ splits in $\calO_k$ and $2 \notin \Q_3^{\times 2}$. If $S$ is enlarged to include a prime above $3$, then $\Sha^1_{S}(k,\mu_2) = 0$.
\end{Remark}

For the remainder of the section we suppose $C/k$ is the hyperelliptic curve defined by the affine equation $y^2 = f(x)$ and maintain the notation introduced above.

\subsection{Explicit representatives for $\Br_\Upsilon(C)$} 
By \cite[Proposition 5.1]{CreutzViray} the homomorphism
\begin{equation}\label{eq:gamma}
	L^\times \ni \ell \mapsto \calA_\ell := \Cor_{L/k}(\ell,x-\theta) \in \Br(k(C))
\end{equation}
induces a surjective map
\[
	\frakL_1 \to \Br_\Upsilon(C)/\Br_0(C)\,.
\]

\subsection{The unramified outside $S$ subgroup}

\begin{Definition}
	Suppose that $k$ is number field. A class in $L^\times/L^{\times 2}$ is \defi{unramified at $v \in \Omega_k$} if its image under the isomorphism $L^\times/L^{\times 2} = \HH^1(k,\Res_{L/k}(\mu_2))$ is unramified at $v$. A class in $L^\times/k^\times L^{\times 2}$ is \defi{unramified at $v \in \Omega_k$} if its image under the injecitve map $L^\times/k^\times L^{\times 2} \to \HH^1(k,\Res_{L/k}(\mu_2)/\mu_2)=\HH^1(k,\calJ[2])$ is unramified at $v$. For a subset $S \subset \Omega_k$ we use $(L^\times/L^{\times 2})_S$, $\frak{L}_{1,S}$ and $\frak{L}_{c,S}$ to denote the subsets (of $L^\times/L^{\times 2}$, $\frak{L}_1$ and $\frak{L}_c$) of elements that are unramified at all primes outside of $S$.
\end{Definition}

\begin{Lemma}\label{lem:L1SS}
	Suppose $k$ is a number field. For any $S \subset \Omega_k$ satisfying the hypothesis of Theorem~\ref{thm:S1}, the map in~\eqref{eq:gamma} induces a surjective map
	\[
		\frakL_{1,S} \to \Br_{\Upsilon,S}(C)/\Br_0(C)\,.
	\]
\end{Lemma}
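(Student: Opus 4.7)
The plan is to pass through a cohomological description of $\gamma$ provided by diagram~\eqref{eq:maindiagram}. Tracing the construction in~\eqref{eq:gamma} (compare the proof of~\cite[Proposition~5.1]{CreutzViray}), the map $\gamma$ is characterized by the property that the composite
\[
	\HH^1(k, J_\frak{m}[2]) \twoheadrightarrow (L^\times/L^{\times 2})_{N=1} \twoheadrightarrow \frakL_1 \xrightarrow{\gamma} \Br_\Upsilon(C)/\Br_0(C)
\]
agrees with the canonical surjection $r^{-1} \circ \lambda_* : \HH^1(k, J_\frak{m}[2]) \twoheadrightarrow \Br_\Upsilon(C)/\Br_0(C)$, where the first two surjections arise from the second row of~\eqref{eq:maindiagram} followed by the quotient by $k^\times/k^{\times 2}$. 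Combined with the defining identity $\Br_{\Upsilon,S}(C)/\Br_0(C) = r^{-1}(\lambda_*(\HH^1_S(k, J_\frak{m}[2])))$, the lemma reduces to showing that the image of $\HH^1_S(k, J_\frak{m}[2])$ in $\frakL_1$ equals $\frakL_{1,S}$.

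For the inclusion \emph{image $\subseteq \frakL_{1,S}$}, which yields surjectivity onto $\Br_{\Upsilon,S}(C)/\Br_0(C)$, I take $\alpha \in \HH^1_S(k, J_\frak{m}[2])$ and verify that its image $\ell \in \frakL_1 \subset \HH^1(k, \calJ[2])$ is unramified at every $v \notin S$. By definition $\alpha_v$ is orthogonal to the unramified subgroup $U_v \subset \HH^1(k_v, \calJ[2])$ under the local Tate pairing~\eqref{eq:localpairing}; using the compatibility of this pairing with the vertical arrows in~\eqref{pairings} together with the self-Cartier-duality of the natural map $J_\frak{m}[2] \to \calJ[2]$ (which factors through the self-dual $\Res_{L/k}\mu_2$), one deduces that $\ell_v$ lies in $(U_v^\perp)^\perp = U_v$. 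For the reverse inclusion, needed for well-definedness of the restricted map, I would fix $\ell \in \frakL_{1,S}$ and choose any lift $\tilde{\ell} \in \HH^1(k, J_\frak{m}[2])$. The ambiguity in the lift is controlled by $d'(\Z/2)$ and the image of $\HH^1(k, \mu_2) = k^\times/k^{\times 2}$ appearing in~\eqref{eq:maindiagram}, and the hypothesis $\Sha^1_S(k, \mu_2) = 0$ of Theorem~\ref{thm:S1} (via Poitou-Tate duality, as in the proof of Lemma~\ref{lem:BrlambdaS}) kills the local-to-global obstruction to adjusting $\tilde{\ell}$ into $\HH^1_S(k, J_\frak{m}[2])$.

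The main obstacle is the local analysis at primes $v \notin S$ with $\val_v(\disc f) = 1$, where $f$ reduces mod $v$ to a polynomial with a simple double root. At such primes, inertia can swap the two coalescing roots of $f$ and so $J_\frak{m}[2]$ has ramified Galois action; consequently $J_\frak{m}[2]$ does not spread out to a smooth group scheme over $\calO_v$, Mazur's criterion $M(k_v^{\unr}) = M(\kbar)$ fails, and the equality $U_v^\perp = U_v'$ (with $U_v'$ the unramified subgroup of $\HH^1(k_v, J_\frak{m}[2])$) may fail. The duality argument outlined above therefore requires an explicit verification at these nodal primes rather than an automatic invocation. To carry this out, I would combine the description of the local image of the $\mu$-map from~\cite[Lemma~4.3]{BruinStoll}, which exhibits a sufficiently large subgroup of $U_v$ inside $L_v^\times/k_v^\times L_v^{\times 2}$, with the explicit formula $\langle \ell, \ell' \rangle = \Cor_{L/k}(\ell, \ell')_L$ for the bottom pairing of~\eqref{pairings}, to verify the required orthogonality directly at these primes.
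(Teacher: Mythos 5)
Your overall strategy coincides with the paper's: use $r(\calA_\ell)=\lambda_*(\alpha)$ from the proof of \cite[Proposition 5.1]{CreutzViray} to reduce the lemma to showing that the image of $\HH^1_S(k,J_{\mathfrak{m}}[2])$ in $\frakL_1$ is exactly $\frakL_{1,S}$, and then argue by local duality at each $v\notin S$. Both halves of that comparison, however, have gaps as you have written them.

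For the inclusion giving surjectivity, the step ``$\ell_v$ lies in $(U_v^\perp)^\perp=U_v$'' is doubly problematic. Knowing $\alpha_v\in U_v^\perp$ does not give $\ell_v\perp U_v^\perp$: self-adjointness of $\phi:J_{\mathfrak{m}}[2]\to\calJ[2]$ only yields $\langle\beta,\ell_v\rangle=\langle\alpha_v,\phi_*\beta\rangle$, and $\phi_*\beta$ need not be unramified for $\beta\in U_v^\perp$. And, as you yourself note, $(U_v^\perp)^\perp=U_v$ is precisely what can fail at the nodal primes. The paper's resolution is different from the one you sketch: it pairs $\ell_v$ against the unramified subgroup $W_v$ of $L_v^\times/L_v^{\times 2}$ rather than against $U_v^\perp$. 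Since $W_v$ maps into $U_v\subset\HH^1(k_v,\calJ[2])$, the compatibilities in~\eqref{pairings} give $\ell_v\perp W_v$ directly from $\alpha_v\perp U_v$, and for any odd $v$ the subgroup $W_v$ is its own exact annihilator under the Hilbert-symbol pairing on $L_v^\times/L_v^{\times 2}$ (even when $L$ is ramified at $v$, and with no reference to the reduction type of $C$). Your proposed repair via the local image of the $\mu$-map is not the right tool: the ``image is all of the unramified part'' portion of \cite[Lemma 4.3]{BruinStoll} requires the residue cardinality to exceed the Weil bound, which is not assumed here, whereas the supply of unramified classes one actually needs (tuples of units in $L_v^\times$) has nothing to do with $C(k_v)$.

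For the reverse inclusion (well-definedness), your plan --- lift arbitrarily and then correct the lift by a Poitou--Tate argument using $\Sha^1_S(k,\mu_2)=0$ --- is left entirely unverified, and in particular you never explain why at a fixed $v\notin S$ the lift can be adjusted to be orthogonal to $U_v$ at all. The paper's route is more elementary and does not use $\Sha^1_S(k,\mu_2)=0$: one multiplies the representative $\ell'$ by a single $a\in k^\times$ so that $a\ell'$ becomes unramified in $L_v^\times/L_v^{\times 2}$ for all $v\notin S$ (harmless, since $\calA_a=(a,c)\in\Br_0(C)$), lifts the result to $\alpha\in\HH^1(k,J_{\mathfrak{m}}[2])$, and invokes the key local fact that for $v\notin S$ an element of $\HH^1(k_v,J_{\mathfrak{m}}[2])$ is unramified if and only if its image in $L_v^\times/L_v^{\times 2}$ is, because $d'(1)$ is unramified there ($f$ acquires a root over $k_v^{\unr}$ since $\val_v(\disc f)\le 1$ and $v\nmid 2c$); unramified classes are then automatically orthogonal to $U_v$, so $\alpha\in\HH^1_S(k,J_{\mathfrak{m}}[2])$. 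You correctly isolate the nodal primes as the delicate point, but the concrete mechanism that handles them in both directions is the descent to the explicit Hilbert-symbol pairing on $L_v^\times/L_v^{\times 2}$, which your write-up does not supply.
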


\begin{proof}
	Let $v \not\in S$. To begin, let us observe that an element of $\HH^1(k_v,J_\frak{m}[2])$ is unramified if and only if its image in $L_v^{\times}/L_v^{\times 2}$ is unramified. To see this we use exactness of the top row of~\eqref{eq:maindiagram} and the fact, noted in the proof of Theorem~\ref{thm:S1}, that $d'(1) \in \HH^1(k_v,J_\frak{m}[2])$ is unramified.
	
	Now suppose $\ell' \in L^\times$ represents a class in $\frak{L}_{1,S}$. Then there exists $a \in k^\times$ such that the class of $\ell = a\ell'$ lies in $(L^{\times}/L^{\times 2})_S$. Note that $\calA_\ell = \calA_{\ell'} + \calA_a =  \calA_{\ell'} + \Cor_{L/k}(a,x-\theta) = \calA_{\ell'} + (a,N_{L/k}(x-\theta)) =\calA_{\ell'} + (a,c) \in \calA_{\ell'}+\Br_0(C)$ (here $c \in k^\times$ is the leading coefficient of $f(x) = cN_{L/k}(x-\theta)$). Exactness of~\eqref{eq:maindiagram} shows that there exists $\alpha \in \HH^1(k,J_\frak{m}[2])$ mapping to the class of $\ell$. By the observation in the first paragraph of the proof, $\alpha \in \HH^1_S(k,J_\frak{m}[2])$. The proof of~\cite[Proposition 5.1]{CreutzViray} shows that $r(\calA_\ell) = \lambda_*(\alpha)$, so $\calA_\ell \in \Br_{\Upsilon,S}(C)$.
	
	To show surjectivity, suppose $\beta \in \Br_{\Upsilon,S}(C)$. Then there exists $\alpha \in \HH^1_{S}(k,J_\frak{m}[2])$ such that $r(\beta) = \lambda_*(\alpha)$ in $\HH^1(k,\Pic(\Cbar))$. If $\ell$ represents the image of $\alpha$ in $L^\times/L^{\times 2}$, then $r(\calA_\ell) = \lambda_*(\alpha)$ so $\beta$ and $\calA_\ell$ determine the same class modulo $\Br_0(C)$. To prove surjectivity we must show that the image of $\ell$ in $L^\times/L^{\times 2}$ is unramified at $v$. 
	
	By assumption $\alpha$ annihilates the unramified subgroup of $\HH^1(k_v,\calJ[2])$. From~\eqref{pairings} it follows that the class of $\ell$ in $L_v^\times/L_v^{\times 2}$ annihilates the unramified subgroup of $L_v^{\times}/L_v^{\times 2}$. For an odd prime such as $v \not\in S$, the unramified subgroup of $L_v^{\times}/L_v^{\times 2}$ is its own exact annihator (even when some simple factors $L_i$ of $L$ are ramified over $k$). So the class of $\ell$ is unramified as required.
\end{proof}

\subsection{Evaluation of $\calA_\ell$}
Given $\calA \in \Br(C)$ and a point $P : \Spec(k) \to C \in C(k)$, the evaluation of $\calA$ at $P$, denoted by $\calA(P)$, is defined as the pullback $P^*\calA \in \Br(k)$.

\begin{Lemma}\label{lem:AP}
	Let $\ell \in (L^\times/L^{\times 2})_{N=1}$ and $P \in C(k)$. Then $\calA_\ell(P) = \Cor_{L/k}(\ell,\mu(P)) \in \Br(k)$.
\end{Lemma}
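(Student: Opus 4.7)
The plan is to use functoriality of corestriction under pullback. The base change $\Spec(L) \to \Spec(k)$ gives a cartesian square with $C_L \to C$, and for $P : \Spec(k) \to C$ one has the induced morphism $P_L : \Spec(L) \to C_L$. By the standard compatibility $P^* \circ \Cor_{L(C)/k(C)} = \Cor_{L/k} \circ P_L^*$ in Brauer groups, and the observation that for $P = (a,b)$ with $b \ne 0$ the quaternion class $(\ell, x-\theta)_{L(C)}$ is regular in a neighborhood of the image of $P_L$ (since $x - \theta$ is a unit wherever $y \ne 0$), we compute
\[
\calA_\ell(P) \;=\; P^*\Cor_{L(C)/k(C)}(\ell, x-\theta) \;=\; \Cor_{L/k}(\ell, a-\theta) \;=\; \Cor_{L/k}(\ell,\mu(P)),
\]
which proves the formula on the open dense locus $\{y \ne 0\} \subset C$.

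For the remaining points---Weierstrass points $P = (a,0)$ and, when $\deg f$ is even, the two points at infinity---the naive representative $(\ell, x-\theta)$ is not regular, but $\calA_\ell$ is a well-defined class in $\Br(C)/\Br_0(C)$ by \cite[Proposition 5.1]{CreutzViray}, so one may compute $\calA_\ell(P)$ using an equivalent representative that is regular at $P$. At a Weierstrass point $P = (a,0)$, factor $f(x) = (x-a)g(x)$ in $k[x]$ with $g(a) = f'(a) \ne 0$; this corresponds to a decomposition $L = k \times L'$. Writing $\ell = (\ell_1, \ell')$ and $\theta = (a,\theta')$, split
\[
\calA_\ell \;=\; (\ell_1, x-a)_{k(C)} \;+\; \Cor_{L'(C)/k(C)}(\ell', x-\theta').
\]
Using the relation $y^2 = (x-a)g(x)$ in $k(C)^\times/k(C)^{\times 2}$ and the fact that quaternion algebras are $2$-torsion in the Brauer group, we have $(\ell_1, x-a)_{k(C)} = (\ell_1, g(x))_{k(C)}$ modulo $\Br_0$. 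This new representative is regular at $P$ and evaluates to $(\ell_1, f'(a))_k$. Combined with the generic evaluation of the remaining terms, this recovers $\Cor_{L/k}(\ell, \mu(P))$ once one identifies the Bruin--Stoll extension $\mu(P) = (f'(a), a-\theta') \in L^\times/k^\times L^{\times 2}$. The points at infinity are handled analogously by changing to the chart $u = 1/x$.

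The step most likely to require care is matching the Bruin--Stoll extended definition of $\mu$ at the special points with the ``regularized'' representatives of $\calA_\ell$ produced above; this is a direct but tedious verification. When $k$ is a number field, a cleaner alternative is available: both $P \mapsto \calA_\ell(P)$ and $P \mapsto \Cor_{L/k}(\ell, \mu(P))$ are locally constant maps $C(k_v) \to \Br(k_v)$ for each place $v$ (the first by general continuity of Brauer evaluations on smooth varieties, the second because $\mu$ is continuous in the $v$-adic topology by its Bruin--Stoll construction); by the first paragraph they agree on the dense subset $\{y \ne 0\} \cap C(k_v)$, hence everywhere on $C(k_v)$. The Hasse--Brauer--Noether injectivity $\Br(k) \hookrightarrow \bigoplus_v \Br(k_v)$ then delivers the equality in $\Br(k)$.
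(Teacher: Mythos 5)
Your first paragraph is precisely the paper's own (one-line) proof: the paper simply notes that the claim ``follows from the definition of $\calA_\ell$ and the definition of $\mu$'', i.e.\ pull back the symbol $\Cor_{L/k}(\ell,x-\theta)$ along $P$ and substitute $x=a$ where the representative is regular. Your additional care at Weierstrass points and at infinity (and the alternative local-constancy plus Hasse--Brauer--Noether argument over number fields) is correct and fills in a verification the paper leaves implicit, so the proposal is sound and takes essentially the same approach.
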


\begin{proof}
	This follows from the definition of $\calA_\ell$ and the definition of $\mu$ given in Section~\ref{sec:mumap}.
\end{proof}

\begin{Lemma}\label{lem:HS}
	Suppose $k$ is a local field of characteristic $0$, $\ell \in (L^\times/L^{\times 2})_{N=1}$ and $P \in C(k)$. Write $L = \prod_{i = 1}^n L_i$ as a product of finite extensions of $k$ and let $\ell = (\ell_i)$ and $\mu(P) = (m_i)$ be the images under this decomposition. Then 
	$$\inv_k(\calA_\ell(P)) = \sum_{i = 1}^n (\ell_i,m_i)_{L_i},\,$$
	where $(\ell,m_i)_{L_i} \in \{ 0 , 1/2 \} \subset \Q/\Z$ is the (additive) Hilbert symbol on the local field $L_i$.
\end{Lemma}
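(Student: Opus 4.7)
The plan is to reduce the statement to a routine string of compatibilities between the invariant map, corestriction, and the Hilbert symbol on local fields, using Lemma~\ref{lem:AP} as the starting point.

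First, I would apply Lemma~\ref{lem:AP} to rewrite $\calA_\ell(P) = \Cor_{L/k}(\ell,\mu(P))$ as an element of $\Br(k)$. Writing $L = \prod_{i=1}^n L_i$, corestriction is additive with respect to this decomposition: under the identification $\Br(L) = \bigoplus_i \Br(L_i)$, the map $\Cor_{L/k}$ is simply $\sum_i \Cor_{L_i/k}$. Combined with the fact that the cup product $L^\times \times L^\times \to \Br(L)$ respects the product decomposition, this yields
\[
	\calA_\ell(P) = \sum_{i=1}^n \Cor_{L_i/k}\bigl( (\ell_i, m_i)_{L_i} \bigr)\,,
\]
where $(\ell_i,m_i)_{L_i} \in \Br(L_i)$ denotes the quaternion algebra class.

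Next I would invoke the standard compatibility of local invariants with corestriction: for any finite extension $L_i/k$ of local fields of characteristic zero, $\inv_k \circ \Cor_{L_i/k} = \inv_{L_i}$ on $\Br(L_i)$ (this is \cite[Proposition~1.4.3]{ADT} or its analogue in \cite[Chapter VII]{GS-csa} applied to the canonical isomorphism $\Br(L_i) \cong \Q/\Z$). Applying $\inv_k$ to the displayed identity and using additivity therefore gives
\[
	\inv_k(\calA_\ell(P)) = \sum_{i=1}^n \inv_{L_i}\bigl((\ell_i,m_i)_{L_i}\bigr)\,.
\]

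Finally, the invariant of the quaternion algebra $(\ell_i,m_i)_{L_i}$ is by definition the (additive) Hilbert symbol $(\ell_i,m_i)_{L_i} \in \{0,1/2\} \subset \Q/\Z$, which recovers the claimed formula. The only real content is the first compatibility (corestriction and invariant), and this is a standard local duality statement; the remaining steps are bookkeeping about the decomposition $L = \prod L_i$. I do not anticipate any serious obstacle beyond ensuring that the product/corestriction decomposition is invoked cleanly.
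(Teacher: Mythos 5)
Your proposal is correct and follows essentially the same route as the paper's proof: apply Lemma~\ref{lem:AP}, split the corestriction over the simple factors $L_i$ of $L$, and use the compatibility $\inv_k\circ\Cor_{L_i/k} = \inv_{L_i}$ together with the identification of the invariant of a quaternion algebra with the additive Hilbert symbol. The paper states this more tersely, but the substance is identical.
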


\begin{proof}
By Lemma~\ref{lem:AP}, 
\[\inv_k(\calA_\ell(P)) = \inv_k(\Cor_{L/k}(\ell,\mu(P))) = \sum_{i=1}^n\inv_k( \Cor_{L_i/k}(\ell_i,m_i))\,,\]
and $\inv_k( \Cor_{L_i/k}(\ell_i,m_i)) = \inv_{L_i}(\ell,m_i) = (\ell,m_i)_{L_i}$, since $\inv_k\circ \Cor_{L_i/k} = \inv_{L_i}$ for the extension of local fields $L_i/k$.
\end{proof}

\begin{Lemma}\label{lem:Aelladelic}
	Suppose $k$ is a number field and $S \subset \Omega_k$ is a finite set of primes satisfying the hypothesis of Theorem~\ref{thm:S1}. Suppose $\ell \in (L^\times/L^{\times 2})_S$ has square norm and $(P_v) \in C(\A_k)$. Then
	\[
		\sum_{v \in \Omega_k}\inv_v(\calA_\ell(P_v)) = \sum_{v \in S}(\ell,\mu_v(P_v))_v\,.
	\] 
	where $\mu_v : C(k_v) \to L_v^{\times}/k_v^\times L_v^{\times 2}$ is the map from Section~\ref{sec:mumap} and $(\ell,\mu_v(P_v))_v$ is the sum of the Hilbert symbols on the simple factors of $L_v$. 
\end{Lemma}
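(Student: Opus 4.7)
The plan is to apply Lemma~\ref{lem:HS} place by place and then verify that the local contribution vanishes for $v \notin S$. Before doing that, I would first justify that the left-hand side is a well-defined finite sum. Since $N_{L/k}(\ell) \in k^{\times 2}$ and $\ell$ is unramified outside $S$, the image of $\ell$ in $L^\times/k^\times L^{\times 2}$ lies in $\frakL_{1,S}$, so by Lemma~\ref{lem:L1SS} the class $\calA_\ell$ agrees modulo $\Br_0(C)$ with some $\beta \in \Br_{\Upsilon,S}(C) \subset \Br(C)$. Thus $\calA_\ell(P_v)$ is defined at every $P_v \in C(k_v)$, only finitely many terms are nonzero, and by global reciprocity applied to the correction class in $\Br_0(C) = \im(\Br(k) \to \Br(C))$, we have $\sum_v \inv_v(\calA_\ell(P_v)) = \sum_v \inv_v(\beta(P_v))$, which is unaffected by the choice of lift $\ell \in L^\times$.

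Next I would verify that the pairing $(\ell, \cdot)_v$ descends to $L_v^\times/k_v^\times L_v^{\times 2}$. Rescaling a representative $(m_i)$ by $a \in k_v^\times$ shifts $\sum_i (\ell_i, am_i)_{L_{i,v}}$ by $\sum_i (\ell_i, a)_{L_{i,v}} = \inv_v(\Cor_{L_v/k_v}(\ell, a)_{L_v}) = \inv_v((N_{L_v/k_v}(\ell), a)_{k_v}) = 0$, using the projection formula and $N_{L/k}(\ell) \in k^{\times 2}$. Lemma~\ref{lem:HS} then yields $\inv_v(\calA_\ell(P_v)) = (\ell, \mu_v(P_v))_v$ for every $v$, reducing the assertion to the claim that $(\ell, \mu_v(P_v))_v = 0$ whenever $v \notin S$.

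For $v \notin S$ the hypothesis on $S$ forces $v$ to be odd, coprime to the leading coefficient $c$, and to satisfy $\val_v(\disc(f)) \le 1$. Under these conditions $\ell$ is unramified at $v$ by assumption, and \cite[Lemma 4.3]{BruinStoll} provides an unramified representative in $L_v^\times/L_v^{\times 2}$ of $\mu_v(P_v)$. Decomposing $L_v = \prod_{i,w} L_{i,w}$, each summand $(\ell_{i,w}, m_{i,w})_{L_{i,w}}$ is then the Hilbert pairing of two unramified classes at an odd prime, and therefore vanishes since the unramified subgroup of $L_{i,w}^\times/L_{i,w}^{\times 2}$ is its own exact annihilator under the Hilbert symbol in that setting. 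Summing over the remaining places $v \in S$ gives the stated equality.

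The main obstacle I anticipate is the step asserting that unramifiedness of the class $\mu_v(P_v) \in L_v^\times/k_v^\times L_v^{\times 2}$ lifts to an unramified representative in $L_v^\times/L_v^{\times 2}$; this is subtle because the pairing in question is a priori only well-defined on the quotient, and we need a specific representative to compute individual Hilbert symbols. It can be handled by tracing through the explicit form $\mu_v(P_v) = [a - \theta]$ from Section~\ref{sec:mumap} together with the reduction analysis of \cite[Lemma 4.3]{BruinStoll} under the given conditions on $v$. Once that lift is secured, the remainder of the argument is a routine combination of the projection formula and the tameness of the local Hilbert symbol at odd residue characteristic.
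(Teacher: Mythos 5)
Your proposal is correct and follows essentially the same route as the paper: reduce to Lemma~\ref{lem:HS} and kill the terms at $v \notin S$ by noting that both $\ell$ and $\mu_v(P_v)$ are unramified there (the latter by \cite[Lemma 4.3]{BruinStoll}), so the tame local pairing at odd $v$ vanishes. The extra verifications you include (well-definedness of the sum and descent of the pairing to $L_v^\times/k_v^\times L_v^{\times 2}$ via the projection formula and the square-norm hypothesis) are sound and are handled elsewhere in the paper, in the subsection on pairings and in Lemma~\ref{lem:L1SS}.
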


\begin{proof}
Any $\ell \in (L^\times/L^{\times 2})_S$ is unramified at all $v\notin S$, and by \cite[Lemma 4.3]{BruinStoll}, $\mu_v(P_v)$ is unramified at all $v\notin S$ (as per the discussion in proof of \ref{thm:S1}). Thus,  $\inv_v(\calA_\ell(P_v))=0$ at all $v\notin S$. So 
\[\sum_{v \in \Omega_k}\inv_v(\calA_\ell(P_v))=\sum_{v \in S}\inv_v(\calA_\ell(P_v))\,.\]
The result follows from Lemma \ref{lem:HS}.

\end{proof}

\section{The algorithm}\label{sec:algorithm}

Let $C : y^2 = f(x)$ be a locally soluble hyperelliptic curve over a number field $k$ of genus $g$ with $f(x) \in \calO_k[x]$. Let $S_{\min} \subset \Omega_k$ be a finite set of primes containing
	\begin{itemize}
	\item all archimedean primes,
	\item all primes above $2$,
	\item all primes that divide the leading coefficient of $f(x)$,
	\item all primes $v$ such that $\val_v(\disc(f(x))) \ge 2$, and
	\item enough primes so that $\Sha^1_S(k,\mu_2) = 0$.
\end{itemize}

\begin{Algorithm}\label{alg1} Let $C/k$ be as at the beginning of this section. Let $\ell_1,\dots,\ell_n \in L^{\times}$ be elements of square norm.
	\begin{enumerate}
	\item\label{step0} Let $S \subset \Omega_k$ be the set of primes obtained by adding to $S_{\min}$ all primes where some $\ell_i$ is ramified. 
	\item\label{step2} For $i = 1,\dots,n$ compute the $\F_2$-linear map
	\[
		\phi_i : \prod_{v \in S} L_v^\times/k_v^{\times}L_v^{\times 2} \ni (m_v) \mapsto \sum_{v \in S} \inv_v\langle \ell_i,m_v\rangle_v \in \frac{1}{2}\Z/\Z\,, 
	\]
	where $\langle \ell_i,m_v\rangle_v = \Cor_{L_v/k_v}(\ell_i,m_v) \in \Br(k_v)$ denotes the central pairing in~\eqref{pairings} (over the base field $k_v$). 
	\item\label{step3} For $v \in S$ compute the image $I_v$ of the map 
	\[
		\mu_v : C(k_v) \to L_v^\times/k_v^\times L_v^{\times 2}\,,
	\]
	defined in Section~\eqref{sec:mumap}.
	\item\label{step4} Compute and return the intersection of $\prod_{v \in S} I_v$ and $\cap_{i = 1}^n \ker(\phi_i)$.
	\end{enumerate}
\end{Algorithm}

\begin{Proposition}\label{prop:alg1}
	The output of Algorithm~\ref{alg1} is the set $\prod_{v \in S}\mu_v(\pi_v(C(\A_k)^{B}))$, where $B \subset \Br(C)$ is the subgroup generated by the algebras $\calA_{\ell_1},\dots,\calA_{\ell_n}$ corresponding to $\ell_1,\dots,\ell_n$ and $\pi_v : C(\A_k) = \prod_{v \in \Omega_k}C(k_v) \to C(k_v)$ is the canonical projection. In particular,
	\begin{enumerate}
		\item the output is the empty set if and only if $C(\A_k)^B = \emptyset$, and
		\item if $\ell_1,\dots,\ell_n$ represent a basis for $\frak{L}_{1,S}$, then the output is the empty set if and only if $C(\A_k)^{\Br_{\Upsilon,S}(C)}= \emptyset$.
	\end{enumerate}
\end{Proposition}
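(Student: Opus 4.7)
The plan is to identify the output of the algorithm with the image of $C(\A_k)^B$ under the map $(P_w)_w \mapsto (\mu_v(P_v))_{v \in S}$ by reducing the Brauer-Manin pairing with each $\calA_{\ell_i}$ to the linear functional $\phi_i$. The key input is Lemma~\ref{lem:Aelladelic}: by step~\eqref{step0} of the algorithm, $S$ contains every prime where any $\ell_i$ is ramified, so each $\ell_i$ lies in $(L^{\times}/L^{\times 2})_S$, and by hypothesis each $\ell_i$ has square norm. So Lemma~\ref{lem:Aelladelic} applies and gives
\[
	\sum_{v \in \Omega_k}\inv_v\bigl(\calA_{\ell_i}(P_v)\bigr) \;=\; \sum_{v \in S}(\ell_i,\mu_v(P_v))_v \;=\; \phi_i\bigl((\mu_v(P_v))_{v \in S}\bigr)
\]
for every $(P_v) \in C(\A_k)$ and every $i$. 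Intersecting the $n$ conditions $(P_v) \in C(\A_k)^{\calA_{\ell_i}}$ therefore translates into the single condition that $(\mu_v(P_v))_{v\in S}$ lie in $\bigcap_{i=1}^n \ker\phi_i$.

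The set equality is then a short bookkeeping argument. Certainly the image of any $(P_w) \in C(\A_k)^B$ lies in $\prod_{v\in S} I_v \cap \bigcap_i \ker\phi_i$. For the converse, given $(m_v)_{v \in S}$ in this intersection, I lift each $m_v$ to some $Q_v \in C(k_v)$ with $\mu_v(Q_v) = m_v$, and, using local solubility of $C$, extend to an adelic point by choosing arbitrary $P_w \in C(k_w)$ for $w \not\in S$; the displayed identity shows this adelic point lies in $C(\A_k)^B$ and maps to $(m_v)_{v \in S}$.

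Consequence~(1) is immediate, since emptiness of the image of $C(\A_k)^B$ is equivalent to emptiness of $C(\A_k)^B$ itself. For consequence~(2), since every adelic point is orthogonal to $\Br_0(C)$, we have $C(\A_k)^B = C(\A_k)^{B+\Br_0(C)}$; and if $\ell_1,\dots,\ell_n$ represent a basis of $\frak{L}_{1,S}$, Lemma~\ref{lem:L1SS} guarantees that the $\calA_{\ell_i}$ span $\Br_{\Upsilon,S}(C)/\Br_0(C)$, giving $C(\A_k)^B = C(\A_k)^{\Br_{\Upsilon,S}(C)}$. The main technical work has already been done in setting up Lemmas~\ref{lem:HS}--\ref{lem:Aelladelic} and~\ref{lem:L1SS}, so the principal delicate point is simply verifying that the hypotheses (square norm, unramified outside $S$) line up the abstract Brauer-Manin pairing with the purely local sum defining $\phi_i$; once this is in hand the argument is essentially combinatorial.
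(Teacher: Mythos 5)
Your proposal is correct and follows essentially the same route as the paper: it reduces the Brauer--Manin pairing with each $\calA_{\ell_i}$ to the linear functional $\phi_i$ via Lemmas~\ref{lem:HS} and~\ref{lem:Aelladelic}, handles the set identity by lifting local images using local solubility of $C$, and deduces part~(2) from Lemma~\ref{lem:L1SS} together with the triviality of the obstruction from $\Br_0(C)$. No gaps; your explicit remarks on why the lemmas apply (square norm, $S$ containing the ramification primes of the $\ell_i$) are points the paper leaves implicit.
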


\begin{proof}
An element $(m_v)_{v \in S} \in \prod_{v\in S} L_v^\times/k_v^\times L_v^{\times 2}$ lies in $\prod_{v\in S} I_v$ if and only if there exists $(P_v) \in C(\A_k)$ such that $m_v = \mu_v(P_v)$ for $v \in S$. For such $(P_v)$, Lemmas~\ref{lem:HS} and~\ref{lem:Aelladelic} give
\[
	\sum_{v\in \Omega_k} \inv_v\calA_{\ell_i}(P_v) = \sum_{v\in S} \inv_v\Cor_{L_v/k_v}(\ell_i, \mu_v(P_v)) = \sum_{v\in S} \inv_v\langle \ell_i, \mu_v(P_v) \rangle = \phi_i((m_v))\,.
\]
So $P \in \cap_{i = 1}^nC(\A_k)^{\calA_{\ell_i}} = C(\A_k)^B$ if and only if $(m_v) \in \cap_{i = 1}^n\ker(\phi_i)$, in which case $(m_v) \in \prod_{v \in S}\mu_v(\pi_v(C(\A_k)^{B}))$.

Statement~\eqref{it1} follows immediately. 

For~\eqref{it2}, suppose $\{\ell_1,\dots,\ell_n\}$ represent a basis for $\frak{L}_{1,S}$. Then $B\subset \Br_{\Upsilon,S}(C)$ and by Lemma~\ref{lem:L1SS} we have that $B$ surjects onto $\Br_{\Upsilon,S}(C)/\Br_0(C)$. So $C(\A_k)^B = C(\A_k)^{\Br_{\Upsilon,S}(C)}$ and~\eqref{it2} follows.
\end{proof}

We now provide details of how to carry out the steps in the algorithm above.

\noindent{\bf Step~\eqref{step0}:} For an odd prime $v$ one can detect whether $\ell_i$ is ramified at $v$ by looking at the valuations of (the components of) $\ell_i$ at the primes above $v$, so this step is straightforward provided we can factor the numerator and denominator of the norm $N_{L/\Q}(\ell_i)$. In practice, we often choose $S$ and then find $\ell_1,\dots,\ell_n$ that are unramified outside $S$ to use as the input.

\noindent{\bf Step~\eqref{step2}:} As noted in Lemma~\ref{lem:HS}, the pairings $\inv_v\langle \ell_i,m_v\rangle_v$ can be computed using Hilbert symbols. We choose a basis for each of the finite $\F_2$ vector spaces $L_v^{\times}/k_v^{\times}L_v^{\times 2}$ and compute the pairings of these basis elements with $\ell_i$. This allows us to write down a matrix (column vector) representing the linear map $\phi_i$.

\noindent{\bf Step~\eqref{step3}:} An efficient algorithm for this is described in \cite[Section 4]{BruinStoll}.

\noindent{\bf Step~\eqref{step4}:} The set to be computed is the intersection of the subspace $\cap \ker(\phi_i)$ of the finite vector space $V := \prod_{v \in S} L_v^\times/k_v^{\times}L_v^{\times 2}$ over $\F_2$ with a finite subset $I := \prod_{v\in S}I_v \subset V$. While this is clearly computable, $\#I$ grows exponentially with $\#S$, so the naive approach of testing each element of $I$ for membership of $W$ will quickly become impractical. We describe a recursive method for computing this intersection which makes use of the fact that the $\phi_i$ are sums linear maps on the $L_v^\times/k_v^{\times}L_v^{\times 2}$ and $I$ is a product of subsets of these spaces.

Call a subset $X \subset \prod_{v \in S} L_v^\times/k_v^\times L_v^{\times 2}$ a \defi{subproduct} if it is a Cartesian product of subsets $X = \prod_{v \in S} X_v$ with $X_v \subset L_v^\times/k_v^{\times}L_v^{\times 2}$. For example, $I$ is a subproduct. Suppose $\ell \in L^\times$ is unramified outside $S$. For each $v \in S$, $\ell$ determines a partition $X_v = X_v^0 \cup X_v^1$ where $X_v^0 = \{ m_v \in X_v \;:\;  \inv_v\langle\ell,m_v\rangle = 0 \}$. It follows that $X^\ell := X \cap \ker(\phi_\ell)$ is the union the $2^{(\#S - 1)}$ (possibly empty) subproducts $X_{\bf a} := \prod_{v \in S}X^{a_v}_v$, where ${\bf a} = (a_v) \in (\F_2)^{S}$ is such that $\sum a_v = 0$. Now if $\ell' \in L^\times$ is another element unramified outside $S$, we see
\begin{equation}\label{eq:twoells}
	X^{\{\ell,\ell'\}} = X^\ell \cap X^{\ell'} = \bigcup_{{\bf a}} ( X_{\bf a} \cap X^{\ell'}) = \bigcup_{{\bf a}} (X_{\bf a})^{\ell'}\,,
\end{equation}
where the sets $(X_{\bf a})^{\ell'}$ are themselves unions of $2^{(\#S-1)}$ subproducts by the argument above, which applies because the $X_{\bf a}$ are subproducts. It follows that $I^{\{\ell_1,\dots,\ell_n\}} = I \cap \left(\cap_{i = 1}^n \ker(\phi_i)\right)$ can be written as a union of $2^{(\#S-1)^n}$ subproducts. These may be viewed as the leaves of a regular $(\#S-1)$-ary tree of height $n$ whose nodes at height $m$ correspond to the subproducts in $I^{\{\ell_1,\dots,\ell_m\}}$ (the root of the tree is $I$ and lies at height $0$). To determine which of the leaves correspond to nonempty subproducts we traverse each branch upward until we reach either an empty node (i.e. node whose subproduct is empty) in which case all nodes above this are empty, or find a nonempty node at level $n$ in which case the corresponding elements lie in $I^{\{\ell_1,\dots,\ell_n\}}$.

Let us discuss the efficiency of this approach. Since the dimension of $L_v^\times/k_v^{\times}L_v^{\times 2}$ is bounded by a constant $c$ independent of $v$, computing the nodes $X_{\bf a}$ comprising $X^\ell$ below a given node $X$ requires checking subspace membership for at most $\sum_{v\in S} \#X_v \le c \#S$ elements (this is linear in $\#S$, whereas the naive approach would require $\prod_{v \in S} \#X_v$ membership checks, which is exponential in $\#S$). In a worst case scenario where all interior nodes are nonempty (so we must traverse the entire tree) we would need to call the procedure $2^{{(\#S - 1)}^{n}}$ times. In practice though, the situation is much better. For given $\ell$, there are typically very few nodes $X_{\bf a} \subset X^\ell$ which are nonempty, meaning most branches are quite short. In part this is explained by the following observation: the sets $X^1_v$ are empty except possibly when $v$ lies in the set
\[
	S'(\ell) := \left\{ v \in S \;:\; \text{$\ell$ is ramified at $v$, $v \mid 2c$, $v$ is archimedean, or $\ord_v(\disc(f)) \ge 2$} \right\}\,,
\]
which is typically much smaller than $S$. This is because for primes $v \not\in S'(\ell)$ both the image of $\ell$ in $L_v^\times/L_v^{\times 2}$ and the local image $I_v$ are unramified (cf. the proof of Theorem~\ref{thm:S1}). 

\subsection{Choosing the input for Algorithm~\ref{alg1}}

Suppose $S \subset \Omega_k$ contains $S_{\min}$ and let $B \subset \Br(C)$ be the subgroup generated by $\calA_{\ell_1},\dots,\calA_{\ell_n}$, where $\ell_1,\ell_2,\ldots,\ell_n \in L^\times$ are elements of square norm unramified outside $S$. The algorithm allows us to determine if $C(\A_k)^{B}$ is empty. We have containments
\[
	C(k) \subset C(\A_k)^{\Br(C)[2]} \subset C(\A_k)^{\Br_{\Upsilon,S}(C)} \subset C(\A_k)^B \subset C(\A_k)\,.
\]
If $\{ \ell_1,\dots,\ell_n\}$ spans $\frak{L}_{1,S}$, then Lemma~\ref{lem:L1SS} shows that $C(\A_k)^{\Br_{\Upsilon,S}(C)} = C(\A_k)^B$. As described in \cite[Section 12]{PoonenSchaefer}, a basis for $\frak{L}_{1,S}$ can be computed from the $S$-class and $S$-unit groups of the rings of integers of the simple factors of $L$. In practice this may only be feasible under the assumption of GRH. The standard algorithms used to compute class and unit groups will produce a basis for a subspace of $\frak{L}_{1,S}$, which is equal to $\frak{L}_{1,S}$ under the assumption of GRH. Consequently, if $\ell_1,\dots,\ell_n$ is such a conditional basis, the equality $C(\A_k)^{\Br_{\Upsilon,S}(C)} = C(\A_k)^B$ is conditional on GRH, but the containment $C(k) \subset C(\A_k)^{\Br_{\Upsilon,S}(C)} \subset C(\A_k)^B$ is not. In particular, if we conclude from Algorithm~\ref{alg1} that $C(\A_k)^B = \emptyset$, then it follows unconditionally that $C(\A_k)^{\Br_{\Upsilon,S}(C)} = C(k) = \emptyset$.

By Theorem \ref{thm:S1}, the containment $C(\A_k)^{\Br(C)[2]} \subset C(\A_k)^{\Br_{\Upsilon,S}(C)}$ becomes an equality provided $S$ contains all primes of norm up to the bound in the theorem (which depends on the genus of $C$). Consequently, Algorithm~\ref{alg1} gives an algorithm to check if $C(\A)^{\Br C[2]}=\emptyset$. Including all primes up to that bound quickly becomes impractical (the bounds are $1158$, $66562$, $2365446$,  for $g = 2,3,4$). However, in practice we have found that examples with
\[
	C(\A_k)^{\Br_{\Upsilon,S_{\min}}(C)} \ne \emptyset \quad \text{ and } C(\A_k)^{\Br_{\Upsilon,S}(C)} = \emptyset
\]
are fairly uncommon. In such cases, one usually finds that this holds with $S$ including only primes of bad reduction and primes well below the bound in Theorem~\ref{thm:S1}. 
Among the samples described in the following section we found only one curve where including a prime of good reduction larger than $100$ had an impact; this genus $5$ curve is considered in Section~\ref{subsec:g5} below. This behavior is similar to that seen in the descent algorithm of \cite{BruinStoll}, as can be explained by Theorem~\ref{thm:S1}(1) (see also Remark~\ref{rmk:notsobadv}). The descent algorithm computes the finite set of two-coverings locally soluble outside $S_{\min}$ (conditionally on GRH) and then checks these for local solubility at the primes below the bound. Only the primes where one of these coverings is not soluble will be relevant for the Brauer-Manin obstruction.

For curves of high genus or with large coefficients computation of $\frak{L}_{1,S}$ may be prohibitive even assuming GRH. In this case we may still be able to compute some elements of $\frak{L}_{1,S}$ which may be enough to show $C(\A_k)^B = \emptyset$, allowing us to conclude that $C(k) = \emptyset$ (for an example see Proposition~\ref{prop:g50}). In fact, the standard algorithm for computing $\frak{L}_{1,S}$ proceeds by generating random elements until sufficiently many have been found (assuming GRH gives a better stopping criterion). Computing the obstructions coming from such randomly generated elements may lead to a more efficient Las Vegas style algorithm for computing Brauer-Manin obstructions. This idea will be pursued elsewhere.

\section{Examples and Data}\label{sec:Data}

We have implemented Algorithm~\ref{alg1} for hyperelliptic curves over $\Q$ in magma \cite{magma} and used this to produce the examples and data described in this section\footnote{This code is available via at:\\
\url{https://github.com/brendancreutz/Brauer-Manin_Obstructions_on_hyperelliptic_curves}\\
 or with the source files of the arXiv distribution of this paper, arXiv:2112.00230v2.}.

\subsection{A genus $5$ example}\label{subsec:g5}
Consider the genus $5$ curve $C/\Q$ defined by $y^2 = f(x)$, where
\[f(x) = -17x^{12} - 13x^{11} - 15x^{10} + 6x^9 - 19x^8 + 5x^7 - 19x^6 + 4x^5 - 2x^4 +19x^3 + 12x^2 + 13x - 6\,.\]
This curve is everywhere locally soluble. The discriminant of $f(x)$ factors as
\[
	\disc(f(x)) = 2^6\times5^2\times 29 \times 151 \times 54918937 \times 571571633 \times 8389309314807991\,,
\]
so we have $S_{\min} = \{ 2,5,17,\infty \}$. Under the assumption of GRH we can compute a basis for $\frak{L}_{1,S_{\min}}$ in a couple of seconds. Let $B \subset \Br_{\Upsilon,S_{\min}}(C)$ denote the subgroup spanned by this conditional basis. Using Algortihm~\ref{alg1} we compute the image of the set $C(\A_\Q)^B$ in $\prod_{v \in S_{\min}}L_v^\times/k_v^{\times}L_v^{\times 2}$ and find that it is not empty. This does not imply $C(\A_\Q)^{\Br(C)[2]} \ne \emptyset$ (even assuming GRH) because we have not considered elements in $\Br_{\Upsilon}(C)$ that ramify only at the primes below the bound in Theorem~\ref{thm:S1}(2), which in this case is $67141638$. To obtain a deeper obstruction we can repeat the computation, replacing $S_{\min}$ by $S_N := S_{\min} \cup \{ p \le N \;:\; p \text{ is prime} \}$ for a suitable bound $N$. In practice $N = 1000$ is entirely feasible and we found that $C(\A_\Q)^{\Br_{\Upsilon,S_{1000}}(C)} = \emptyset$, proving that $C$ is a counterexample to the Hasse principle. The entire computation took about 12 minutes on our server.

As we have found is typically the case, the obstruction can also be obtained using a much smaller set of primes $S$. The \verb+TwoCoverDescent+ in Magma described in \cite{BruinStoll} computes the sets
\[
	\Sel^2(C/\Q)_N := \{ \text{Two-covers that are locally soluble for all $p \le N$ and all $p > 67141638$} \}
\]
for increasing $N$. After about 20 minutes on our server it concluded that $\Sel^2(C/\Q)_{238} \ne \emptyset$, but $\Sel^2(C/\Q)_{239} = \emptyset$ (conditionally on GRH). In light of Theorem~\ref{thm:S1}, we therefore expect the prime $p = 239$ will be relevant for the Brauer-Manin obstruction as well. Running Algorithm~\ref{alg1} with a (GRH-conditional) basis for $\frak{L}_{1,S}$, with $S = \{2,5,17,239,\infty\}$ takes about 15 seconds and shows unconditionally that the sets $C(\Q),\, C(\A_\Q)^{\Br(C)[2]},\, C(\A_\Q)^{\Br_{\Upsilon,S}(C)}$ and $\Sel^2(C/\Q)_{239}$ are all empty. In this way, Algorithm~\ref{alg1} can be used to certify the otherwise conditional output of \verb+TwoCoverDescent+. That said, in the vast majority of cases where we have seen a Brauer-Manin obstruction it is already given by $\Br_{\Upsilon,S_{\min}}(C)$ and (for higher genus curves at least) it seems computation of $C(\A_\Q)^{\Br_{\Upsilon,S_{\min}}(C)}$ using our implementation of Algorithm~\ref{alg1} is faster than \verb+TwoCoverDescent+, though much of the disparity likely comes down to specifics of the implementation.

\begin{Remark}
	We have compared our implementation of Algorithm~\ref{alg1} with \verb+TwoCoverDescent+ in this way for many thousands of curves, finding the outputs to be consistent in all cases when both terminated (i.e., without error, running out of memory, or being interrupted intentionally or otherwise). We take this as strong evidence of the correctness of these algorithms and their implementations in magma.
\end{Remark}

\subsection{A genus $50$ example}

\begin{Proposition}\label{prop:g50}
Let $C/\Q$ be the genus $50$ hyperelliptic curve defined by $y^2 = 5f(x)$ where
\begin{align*}
f(x) =&\phantom{,} x^{102} + x^{101} + x^{97} + x^{95} + x^{93} + x^{90} + x^{86} + x^{80} + x^{77} + x^{75} + x^{71}\\
      & + x^{70} + x^{68} + x^{65} + x^{64} + x^{63} + x^{62} + x^{59} + x^{58} + x^{53} + x^{50} + x^{49}\\ 
      &+ x^{48} + x^{46} + x^{45} + x^{44} + x^{38} + x^{37} + x^{36} + x^{35} + x^{32} + x^{31} + x^{26} \\ 
      &+ x^{25} + x^{22} + x^{16} + x^{11} + x^8 + x^7 + x + 1\,.
\end{align*}
Then $C(\A_\Q) \ne \emptyset$, but $C(\A_\Q)^{\Br(C)} = \emptyset$.
\end{Proposition}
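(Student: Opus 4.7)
The plan is to apply Algorithm~\ref{alg1} without attempting to compute a full basis of $\frak{L}_{1,S}$. The algebra $L = \Q[x]/\langle f(x)\rangle$ has degree $102$, for which rigorous class and unit group computations are infeasible even under GRH; this is precisely the point of the example. Instead, following the strategy outlined at the end of Section~\ref{sec:algorithm}, the aim is to exhibit a short list $\ell_1,\dots,\ell_n \in L^\times$ of square-norm elements, each unramified outside a small finite set $S \supset S_{\min}$, for which Algorithm~\ref{alg1} returns the empty set. By Proposition~\ref{prop:alg1}\eqref{it1} this forces $C(\A_\Q)^B = \emptyset$ unconditionally, where $B = \langle \calA_{\ell_1},\dots,\calA_{\ell_n}\rangle \subset \Br(C)[2]$, and since $B \subset \Br(C)$ the conclusion $C(\A_\Q)^{\Br(C)} = \emptyset$ follows without any appeal to GRH.

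First I would verify local solvability: factor $\disc(5f(x))$ to identify the primes of bad reduction, exhibit an explicit $\Q_v$-point at each such prime together with $v = 2$ and $v = \infty$, and invoke Hensel's lemma together with the Weil bounds to dispose of all other primes (in practice this is handled by a call to Magma's \texttt{IsLocallySolvable}). This determines $S_{\min}$. Next I would search for candidate $\ell_i \in L^\times$ of square norm and limited ramification by the usual recipes: images of $a - \theta$ for small $a \in \Z$ with $5f(a)$ of the form (square)$\times$($S$-unit), products of such that land in $\frak{L}_1$, and pullbacks of units from small subfields of $L$ associated to any rational factors of $f$. A modest heuristic search of this kind produces elements ramified only within a small $S$.

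For each trial tuple $(\ell_1,\dots,\ell_n)$ I would then execute the four steps of Algorithm~\ref{alg1}: enlarge $S$ to absorb the ramification of the $\ell_i$ (Step~\eqref{step0}); assemble the linear functionals $\phi_i$ from Hilbert-symbol computations on the local factors $L_v = \prod L_{v,i}$ via Lemma~\ref{lem:HS} (Step~\eqref{step2}); compute $I_v = \mu_v(C(\Q_v))$ by \cite[Section 4]{BruinStoll} (Step~\eqref{step3}); and use the recursive subproduct traversal described at the end of Section~\ref{sec:algorithm} to decide whether $\prod_{v \in S} I_v \cap \bigcap_i \ker(\phi_i) = \emptyset$ (Step~\eqref{step4}). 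If the intersection is empty, the proposition is proved by Proposition~\ref{prop:alg1}\eqref{it1}.

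The main obstacle is the search step: we must find, without access to a full $S$-unit basis of $L$, enough elements $\ell_i$ so that the $\phi_i$ simultaneously annihilate every element of $\prod_v I_v$. There is no a priori guarantee that any short list works, and each additional $\ell_i$ multiplies the worst-case size of the tree traversed in Step~\eqref{step4} by up to $2^{\#S - 1}$, so one wants $n$ as small as possible. The key practical leverage is that, as noted in the paper, most $\phi_i$ are supported on the small set $S'(\ell_i)$, so branches of the tree die off quickly; and once a successful tuple is found, the entire certificate consists of the explicit $\ell_i$ together with the finitely many local computations of $I_v$ and of the Hilbert symbols, each of which is a routine and mechanically verifiable calculation in $\Q_v$.
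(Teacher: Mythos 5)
Your general framework is the right one and matches the paper's: apply Proposition~\ref{prop:alg1}\eqref{it1} with a partial list of square-norm elements, so that emptiness of $C(\A_\Q)^B$ is unconditional and implies $C(\A_\Q)^{\Br(C)} = \emptyset$. But as written there is a genuine gap: you never exhibit the element that actually works, and you concede yourself that ``there is no a priori guarantee that any short list works.'' For a proposition asserting a definite fact about a specific curve, the proof must produce the witness and verify it; a description of a search procedure that may or may not succeed is not a proof. The paper's proof supplies exactly this missing content. The witness is the single element $\ell = \theta$ (a root of $f$), which has norm $f(0) = 1$ because $f$ was deliberately constructed with equal leading and constant coefficients --- this is the one structural observation your recipe of ``$a-\theta$ for small $a$'' would need to land on with $a=0$, but which you do not single out.

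The second missing piece is the completed local analysis, which in the paper is a clean hand-verifiable argument rather than a generic tree traversal: for all odd $v \ne 5$ the discriminant of $f$ has valuation at most $1$, so the evaluation map of $\calA_\theta$ is identically $0$ there; at $\infty$ the curve has one real component and both real roots of $f$ lie below it, giving trivial symbols; at $v=2$ the finitely many classes in $\mu_2(C(\Q_2))$ are computed and all Hilbert symbols with $\theta$ vanish; and at $v = 5$ the polynomial has a single simple root $r \equiv 3 \bmod 5$, every $\Q_5$-point lies near the Weierstrass point $(r,0)$ so $\mu_5$ is constant, and $(r, \tilde f(r))_{\Q_5} = 1/2$ since $r$ is a nonsquare unit and $\tilde f(r) \in 5\Z_5^\times$ (this is where the twist by $5$ in $y^2 = 5f(x)$ enters). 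The upshot is that $\sum_v \inv_v(\calA_\theta(P_v)) = 1/2$ for \emph{every} adelic point, so a single Brauer class suffices and no intersection of kernels in Step~\eqref{step4} is ever needed. Without identifying $\theta$ and carrying out these local computations, your proposal establishes nothing about this particular curve.
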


\begin{proof}[Proof of Proposition~\ref{prop:g50}]
It is straightforward to check with the help of magma that $C$ is locally soluble. Let $\theta$ denote a root of $f(x)$. Then $L = \Q[\theta]$ is a degree $102$ number field and $\ell = \theta \in L^\times$ is an element of norm $f(0) = 1$. This corresponds to an element $\calA_\ell \in \Br_{\Upsilon}(C)$. We will show that the evaluation maps $\calA_\ell : C(\Q_v) \to \Br(\Q_v)$ are constant for all $v \in \Omega_\Q$ and are trivial if and only if $v \ne 5$. It follows that $C(\A_\Q)^{\calA_\ell} = \emptyset$. In effect we are running Algorithm~\ref{alg1} with input $\ell_1 = \ell = \theta$.

For all odd primes $v$, the valuation of the discriminant of $f(x)$ is at most $1$. So as in the proof of Lemma~\ref{lem:Aelladelic} the evaluation maps are trivial for $v \notin \{ 2,5,\infty\}$. To determine the evaluation map at $v = \infty$, we check that $f(x)$ has two real roots $r_1\le r_2 \in \R$, so $C(\R)$ is a single component whose image in $\PP^1(\R)$ is the complement of the interval $(r_1,r_2)$. Since the evaluation map is locally constant, it is constant. Taking $P \in C(\R)$ with $x(P) = a > r_2$ we have $\inv_\infty(\calA_\ell(P)) = (x(P)-r_1,r_1)_\R + (x(P)-r_2,r_2)_\R$ by Lemma~\ref{lem:HS}. Both terms in this sum are trivial because $x(P) - r_i > 0$ for $i = 1,2$.

The polynomial $f(x)$ has a single $5$-adic root $r \in \Q_5$ and it is a unit congruent to $3 \bmod 5$. Using the algorithm of \cite[Section 4]{BruinStoll} one checks that $\mu_5 : C(\Q_5) \to L_5^\times/\Q_5^{\times}L_5^{\times 2}$ is constant (because all $5$-adic points lie in a small neighborhood of the point $Q = (r,0)$). Write $5f(x) = (x-r)\tilde{f}(x)$ and let $\theta_1,\dots,\theta_n \in L_5 = \Q_5 \times \Q_5(\theta_1) \times \cdots \times \Q_5(\theta_n)$ be roots of the irreducible factors of $\tilde{f}$. Using Lemma~\ref{lem:HS} we have $\inv_5(\calA_\ell(Q)) = (r,\tilde{f}(r))_{\Q_5} + \Sigma_{\theta_i \ne r}(\theta_i,r-\theta_i)_{\Q_5(\theta_i)}$. The terms $(\theta_i,r-\theta_i)$ are all $0$ because $5$ does not divide the discriminant of $f(x)$ and this implies that any root or difference of two roots must have even valuation. On the other hand, $r \in \Z_5^{\times}$ is not a square and $\tilde{f}(r) \in 5\Z_5^\times$, so $(r,\tilde{f}(r))_{\Q_5} = 1/2$.

Using the algorithm of \cite[Section 4]{BruinStoll} we compute that $\mu_2 : C(\Q_2) \to L_2^\times/\Q_2^\times L_2^{\times 2}$ has image equal to the classes represented by $4 - \theta$, and $1/a - \theta$ for $a \in \{4,12,20,28\}$. As above, Lemma~\ref{lem:HS} reduces checking that $\calA_\ell(C(\Q_2)) = 0 \in \Br(\Q_2)$ to the computation of Hilbert symbols of these values with $\ell = \theta$ at the primes of $L$ above $2$.
\end{proof}

Let us make some further remarks on the example in Proposition~\ref{prop:g50}. The Galois group of $f(x)$ is the full symmetric group so computation of class and unit group of the number field it defines are completely out of reach even assuming GRH. Second, the Jacobian $J$ of the curve is absolutely simple and $\End(J) = \Z$ (one checks that $J_{\F_3}$ and $J_{\F_7}$ are both absolutely simple and their endomorphism algebras are linearly disjoint over $\Q$ by computing zeta functions), so there are no maps to lower positive genus curves or isogenies of degree a small power of $2$ that might be used to disprove the existence of rational points on $C$.

The curve was selected as follows. We wanted a polynomial with equal leading and constant coefficients, so that a root gives an element of norm $1$ and thus a Brauer class on the curve which might yield an obstruction to the Hasse principle. To carry out the required computations we must be able to factor the discriminant of the polynomial and compute Hilbert symbols in the completions of the number field it defines. Even the latter becomes rather expensive when there are $2$-adic primes of large degree (though a better implementation could likely improve upon this). For this reason we generated $f(x)$ as a product of minimal polynomials of random elements of $\F_{2^8}$. The curve defined by $f(x)$ has rational points above $0,\infty \in \PP^1(\Q)$, so we considered quadratic twists. The quadratic twists by $d = -1,2,3$ are not locally soluble, but the twist by $d = 5$ is. 

We carried out similar computations for other polynomials of similar degree with a norm $1$ root and found that the root gives an obstruction with reasonable frequency (at least for these high genus curves).

\subsection{Statistics from random samples}
To test the effectiveness of our algorithm, we attempted to decide on the existence of rational points on several large random samples of hyperelliptic curves over $\Q$. In this subsection we report on the results.

For various values of $g$ and $n$ we drew samples from the set $M(g,n)$ of genus $g$ hyperelliptic curves over $\Q$ defined by $y^2=f(x)$, where $f(x)=f_{2g+2}x^{2g+2}+\ldots+f_0\in \Z[x]$ is an irreducible polynomial of degree $2g+2$, and $|f_i|\leq n$ for $i=0,\ldots,2g+2$. Samples were drawn by choosing the coefficients uniformly at random. Each curve $C$ is classified as one of the following cases:
\begin{enumerate}
\item \textbf{Not Locally Soluble}; $C(\A_\Q) = \emptyset$.
\item \textbf{Brauer-Manin Obstructed}; $C(\A_\Q)\ne \emptyset$ and $C(\A_\Q)^B=\emptyset$ for $B$ a subset of $\Br_{\Upsilon,S}(C)$ such that $C(\A_\Q)^B= C(\A_\Q)^{ \Br_{\Upsilon,S}(C)}$ conditionally on GRH and $S$ is the set including all primes of $S_{\min}$, all primes up to $10^3$ and all primes of bad reduction up to $10^4$.
\item \textbf{Has a Rational Point}; $C$ has a rational point whose $x$-coordinate is of height $\le 10^6$
\item \textbf{Undecided}; $C$ does not fall into one of the categories above.
\end{enumerate}
Note that in cases (1) - (3) the existence of rational points on $C$ has been decided. 

The graphs and tables below give the proportion of curves in each case, in each of our samples. Sample sizes for each $(g,n)$  ranged from $500$ to $2000$ and can be found in the `Undecided' column of the table below.

The data allows for a few interesting observations:
\begin{itemize}
\item For each $g$ the proportion of curves that are not locally soluble appears to remain stable as $n$ varies.
\item The vast majority of of hyperelliptic curves have a Brauer-Manin obstruction to the existence of rational points which we are able to compute using Algorithm \ref{alg1}.
\item The proportion of curves with a Brauer-Manin obstruction appears to increase with both $g$ and $n$. The dependence on $g$ is in line with the results of \cite{Bhargava}. For curves of genus $\ge 5$ we are able to decide existence of rational points for over $99$\% of curves. For genus $10$ curves we have been able to decide for every curve in the sample.
\item The proportion of undecided curves appears to decrease as the genus increases, but increases slightly as $n$ is increased. 
\end{itemize}

\vfill

\begin{table}[h!]
\begin{tabular}{|l|l|l|l|rl|l|}
\hline
$g$ & $n$ & Sample    &	$C(\Q)\neq\emptyset$	  & $C(\A_\Q)^B$ & $=\emptyset$  & Undecided \\ \cline{5-6}
  	& 	  & Size      &  & \multicolumn{1}{l|}{$C(\A_\Q)=\emptyset$} & $C(\A_\Q)\neq\emptyset$  &  \\ \hline\hline
$2$ & $10$ & 1000 & $47.3\%$  & \multicolumn{1}{l|}{$17.8\%$} & $30.3\%$ &  $4.6\%$ \\ \cline{2-7} 
  & $20$  & 1000 & $36.2\%$ & \multicolumn{1}{l|}{$15.2\%$} & $38.3 \%$ & $ 10.3\%$ \\ \cline{2-7} 
  & 50    & 1000 &  $25.5\%$ & \multicolumn{1}{l|}{$15.5\%$} & $ 46.4\%$ &$ 12.6\%$  \\ \cline{2-7} 
  & 100   & 1000 &  $20.3\%$ & \multicolumn{1}{l|}{$15.8\%$} & $50.4 \%$ & $ 13.5\%$ \\ \cline{2-7} 
  & 200   & 1000 &  $14\% $& \multicolumn{1}{l|}{$15.6\%$} & $54.9 \%$ & $ 15.5\%$ \\ \cline{2-7} 
  & 500   & 1000 &  $10.3\%$& \multicolumn{1}{l|}{$18\%$} & $53.8 \%$ & $ 17.9\%$ \\ \cline{2-7} 
  & 1000  & 1000 &  $5.6\%$ & \multicolumn{1}{l|}{$15.1\%$} &  $ 59.7 \%$& $ 19.6\%$ \\ \cline{2-7} 
  & 10000 & 560 &  $ 2.7\%$& \multicolumn{1}{l|}{$16.6\%$} & $60.7\%$  &  $20\%$\\ \hline\hline

3 & 10    & 1012 & $40.1 \%$ & \multicolumn{1}{l|}{$ 17.1\%$ } & $38.8 \%$  & $ 4\%$ \\ \cline{2-7} 
  & 20    & 1000 & $32.9 \%$  & \multicolumn{1}{l|}{$14.2 \%$} & $48.5 \%$ & $ 4.4\%$ \\ \cline{2-7} 
  & 50    & 1000 & $ 24.2\%$  & \multicolumn{1}{l|}{$13.9 \%$} &$ 56.1\%$  & $ 5.8\%$ \\ \cline{2-7} 
  & 100   & 1000 & $ 18.2\%$ & \multicolumn{1}{l|}{$12.7 \%$} & $ 61.9\%$ &$ 7.2\%$  \\ \cline{2-7} 
  & 200   & 1000 &  $11\%$ & \multicolumn{1}{l|}{$ 14.4\%$} & $ 66.0\%$ & $8.6 \%$ \\ \cline{2-7} 
  & 500   & 1000 & $ 7.1\%$ &  \multicolumn{1}{l|}{$14.3 \%$ } & $ 72.7\%$ & $5.9 \%$ \\ \cline{2-7} 
  & 1000  & 1001 &$ 4.8\%$ & \multicolumn{1}{l|}{$ 15.1\%$ } & $ 73.2\%$ & $ 6.9\%$ \\ \hline\hline
  
4 & 10    & 500 & $42.6 \%$ & \multicolumn{1}{l|}{ $15.2 \%$} &  $ 41\%$ &  $ 1.2\%$ \\ \cline{2-7} 
  & 20   & 500 & $30.2 \%$ & \multicolumn{1}{l|}{ $13.8 \%$} &  $ 54.6\%$ &  $ 1.4\%$ \\ \cline{2-7} 
  & 50    & 500 & $19.2 \%$ & \multicolumn{1}{l|}{ $13.6 \%$} &  $ 64.6\%$ &  $ 2.6\%$ \\ \cline{2-7} 
  & 100   & 500 & $17.8 \%$ & \multicolumn{1}{l|}{ $12.6 \%$} &  $ 67.6\%$ &  $ 2\%$ \\ \cline{2-7} 
  & 200   & 500 & $10.8 \%$ & \multicolumn{1}{l|}{ $12\%$} &  $ 74.8\%$ &  $ 2.4\%$ \\ \cline{2-7} 
  & 500   & 500 & $7.4 \%$ & \multicolumn{1}{l|}{ $13.2 \%$} &  $ 75.8\%$ &  $ 3.6\%$ \\ \cline{2-7} 
  & 1000  & 500 & $4.4 \%$ & \multicolumn{1}{l|}{ $13\%$} &  $ 79.2\%$ &  $ 3.4\%$\\ \hline\hline

5 & 10    & 500 & $ 37.4\%$ & \multicolumn{1}{l|}{$ 14.4 \%$} & $ 47.2\%$ & $ 1 \%$ \\ \cline{2-7} 
  & 20    & 500 & $ 31.6\%$ & \multicolumn{1}{l|}{$ 13.8 \%$} & $ 53.6\%$ & $ 1 \%$ \\ \cline{2-7} 
  & 50    & 500 & $ 21.6\%$ & \multicolumn{1}{l|}{$ 10.8 \%$} & $ 67.4\%$ & $ 0.2 \%$ \\ \cline{2-7} 
  & 100    & 2000 & $ 15.9\%$ & \multicolumn{1}{l|}{$ 13.1 \%$} & $ 70.2\%$ & $ 0.8 \%$ \\ \cline{2-7} 

  & 200    & 930 & $ 9.5\%$ & \multicolumn{1}{l|}{$ 13.3 \%$} & $ 75.9\%$ & $ 1.3 \%$ \\ \hline \hline
6 & 10    & 1000 & $ 37.8\%$ & \multicolumn{1}{l|}{$ 12.7\%$} & $ 48.7\%$ & $ 0.8\%$ \\ \cline{2-7} 
  & 20    & 500 & $ 28.4\%$ & \multicolumn{1}{l|}{$ 11.4\%$} & $ 59.8\%$ & $ 0.4\%$ \\ \cline{2-7} 
  & 50    & 982 & $ 18.5\%$ & \multicolumn{1}{l|}{$ 12.5\%$} & $ 68.8\%$ & $ 0.2\%$ \\ \hline\hline
10 & 5    & 500 & $48.2 \%$ & \multicolumn{1}{l|}{$ 10.4 \%$} & $ 41.4\%$ & $0 \%$ \\ \cline{2-7} 
  & 10    & 485 & $ 35.9\%$ & \multicolumn{1}{l|}{$ 10.7\%$} & $ 53.4\%$ & $0 \%$ \\ \hline 
\end{tabular}
\vspace{5mm}
 \caption{%
Statistics on random samples of genus $g$ and coefficient bound $n$
     }%

\end{table}

\begin{figure}[]
     \begin{center}
        \subfigure[$g=2$]{%
            \label{fig:g2}
            \includegraphics[width=0.4\textwidth]{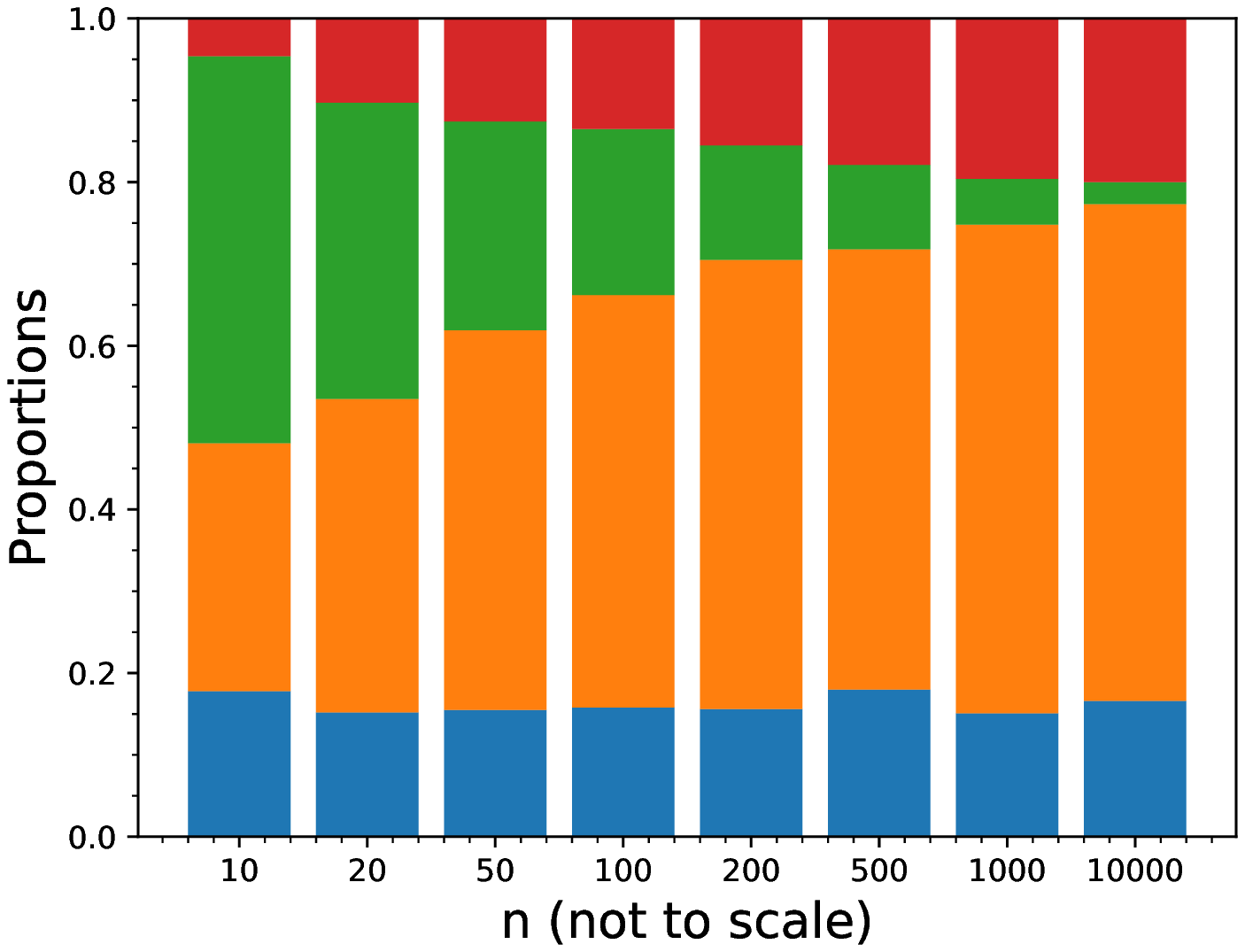}
        }%
        \subfigure[$g=3$]{%
           \label{fig:g3}
           \includegraphics[width=0.4\textwidth]{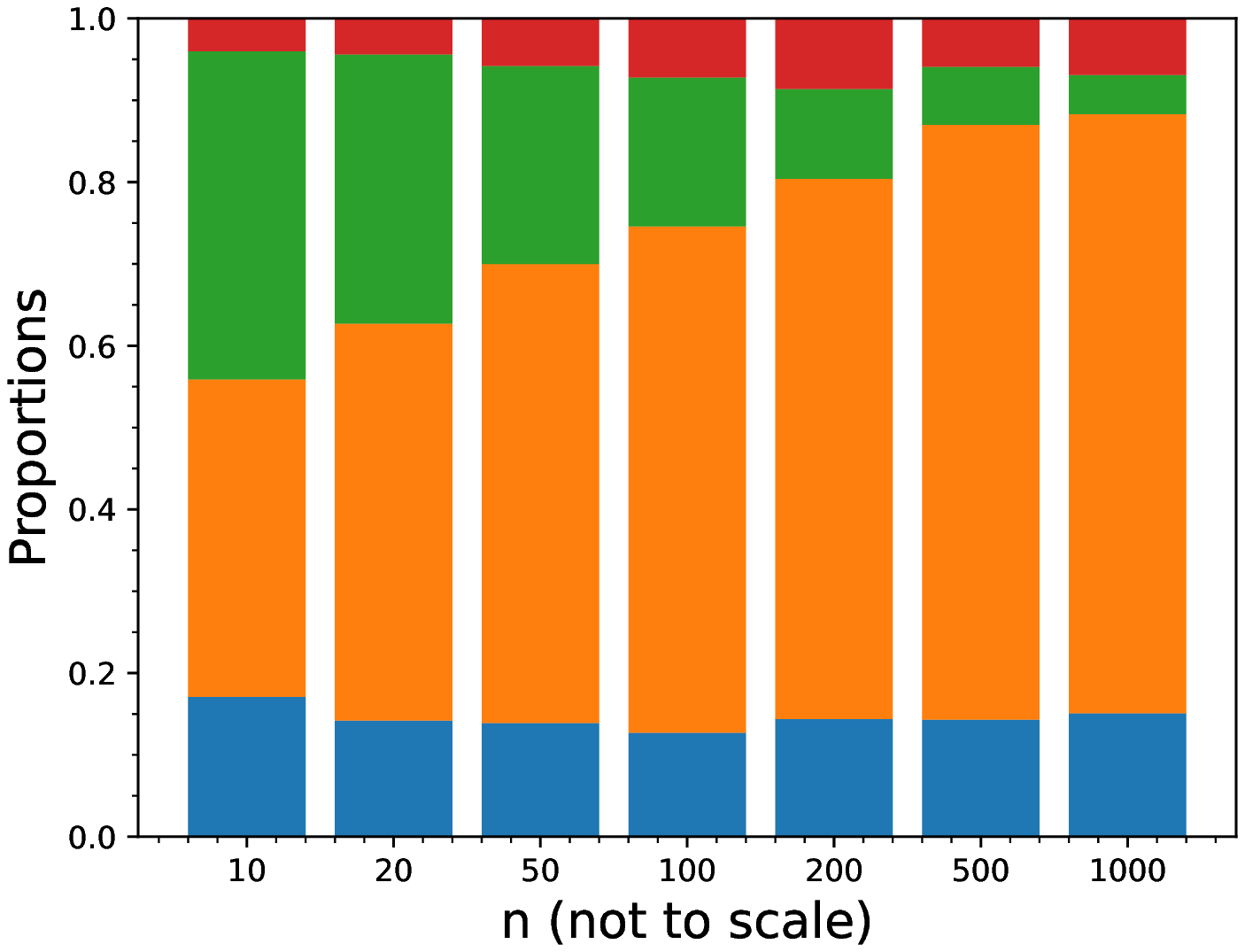}
        }\\ 
        \subfigure[$g=4$]{%
            \label{fig:g4}
            \includegraphics[width=0.4\textwidth]{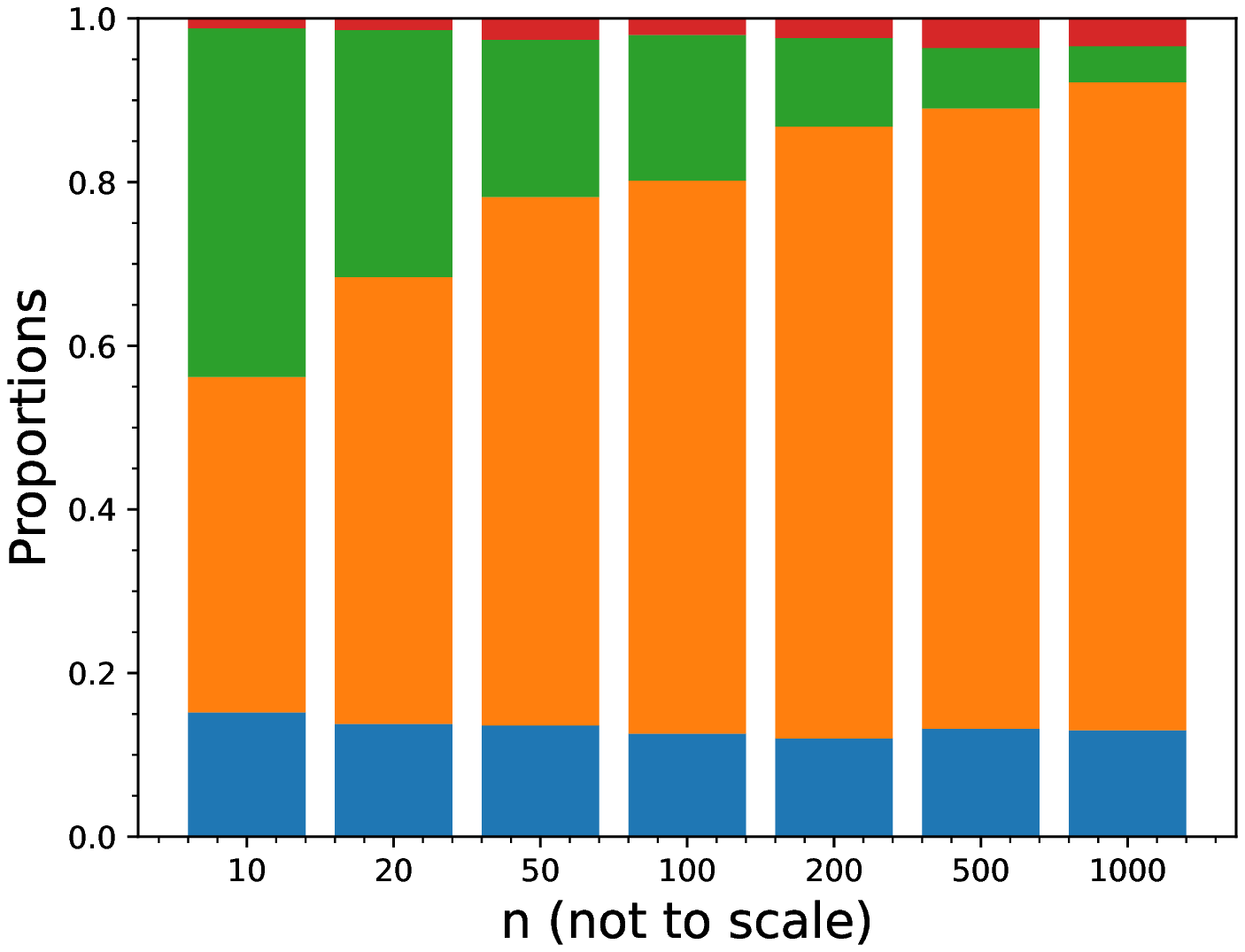}
        }%
        \subfigure[$g=5$]{%
            \label{fig:g5}
            \includegraphics[width=0.4\textwidth]{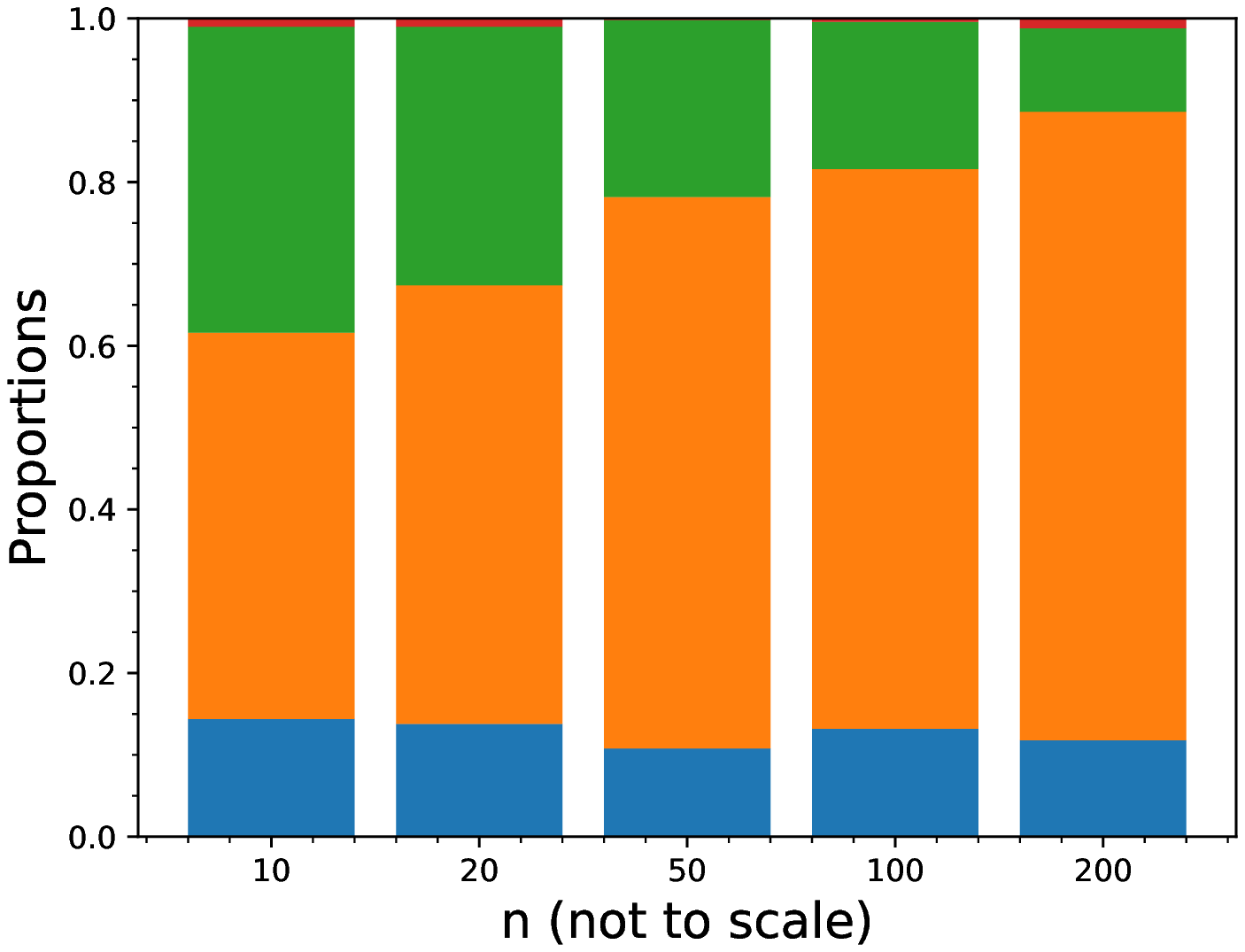}
        }%
        \\
        \subfigure[$g=6$]{%
            \label{fig:g6}
            \includegraphics[width=0.4\textwidth]{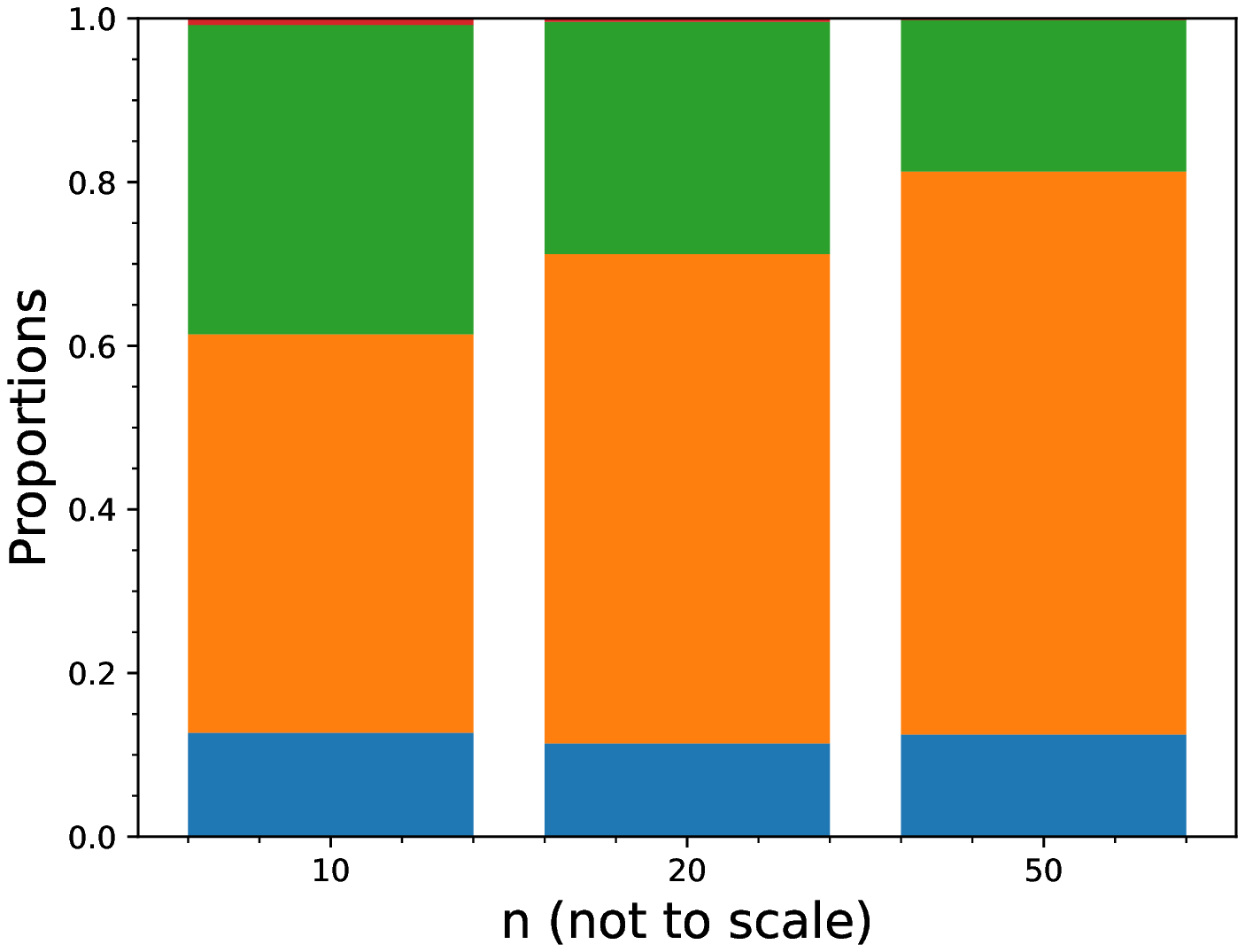}
        }%
        \subfigure[$g=10$]{%
            \label{fig:g10}
            \includegraphics[width=0.4\textwidth]{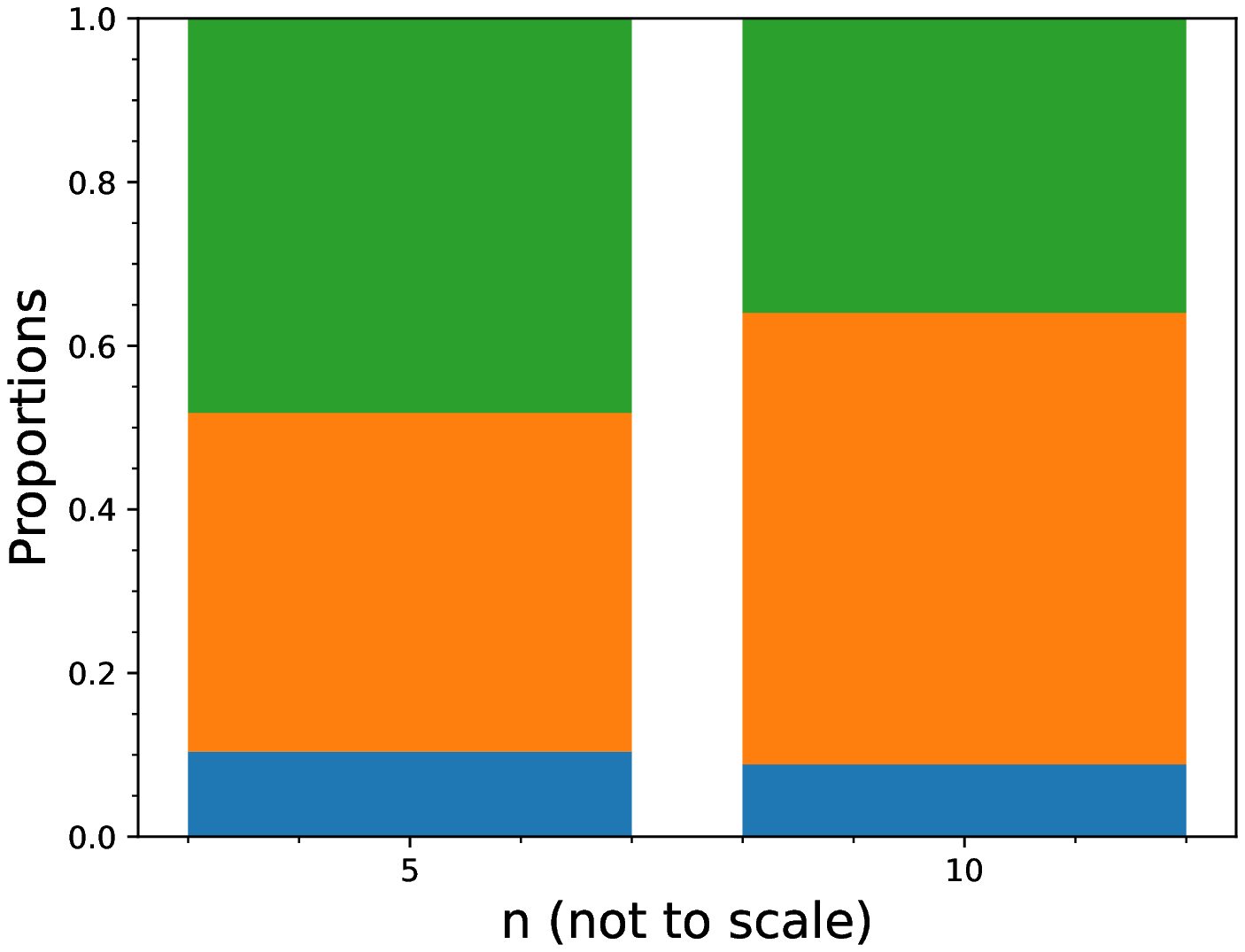}
        }%
        \\
        \subfigure[\empty]{%
            \label{fig:legend}
            \includegraphics[width=0.3\textwidth]{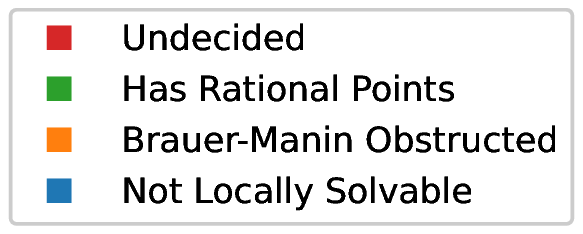}
           
        }%
    \end{center}
    \caption{%
Statistics on random samples of genus $g$ and coefficient bound $n$
     }%
   \label{fig:subfigures}
\end{figure}


	\begin{bibdiv}
		\begin{biblist}

\bib{Antei}{article}{
   author={Antei, Marco},
   title={On the abelian fundamental group scheme of a family of varieties},
   journal={Israel J. Math.},
   volume={186},
   date={2011},
   pages={427--446},
   issn={0021-2172},
   doi={10.1007/s11856-011-0147-9},
}

\bib{Bhargava}{article}{ 
		author = {{Bhargava}, Manjul},
        title = {Most hyperelliptic curves over Q have no rational points},
         year = {2013},
          eprint = {arXiv:1308.0395},
}

\bib{BGW}{article}{
   author={Bhargava, Manjul},
   author={Gross, Benedict H.},
   author={Wang, Xiaoheng},
   title={A positive proportion of locally soluble hyperelliptic curves over
   $\Bbb Q$ have no point over any odd degree extension},
   note={With an appendix by Tim Dokchitser and Vladimir Dokchitser},
   journal={J. Amer. Math. Soc.},
   volume={30},
   date={2017},
   number={2},
   pages={451--493},
   issn={0894-0347},
   review={\MR{3600041}},
   doi={10.1090/jams/863},
}

\bib{magma}{article}{
   author={Bosma, Wieb},
   author={Cannon, John},
   author={Playoust, Catherine},
   title={The Magma algebra system. I. The user language},
   note={Computational algebra and number theory (London, 1993)},
   journal={J. Symbolic Comput.},
   volume={24},
   date={1997},
   number={3-4},
   pages={235--265},
   issn={0747-7171},
   doi={10.1006/jsco.1996.0125},
}

\bib{BPS}{article}{
   author={Bruin, Nils},
   author={Poonen, Bjorn},
   author={Stoll, Michael},
   title={Generalized explicit descent and its application to curves of
   genus 3},
   journal={Forum Math. Sigma},
   volume={4},
   date={2016},
   pages={Paper No. e6, 80},
   doi={10.1017/fms.2016.1},
}	

\bib{BruinFlynn}{article}{
   author={Bruin, Nils},
   author={Flynn, E. Victor},
   title={Towers of 2-covers of hyperelliptic curves},
   journal={Trans. Amer. Math. Soc.},
   volume={357},
   date={2005},
   number={11},
   pages={4329--4347},
   issn={0002-9947},
   doi={10.1090/S0002-9947-05-03954-1},
}

\bib{BruinStoll}{article}{
   author={Bruin, Nils},
   author={Stoll, Michael},
   title={Two-cover descent on hyperelliptic curves},
   journal={Math. Comp.},
   volume={78},
   date={2009},
   number={268},
   pages={2347--2370},
   issn={0025-5718},
   doi={10.1090/S0025-5718-09-02255-8},
}

\bib{BruinStollMWS}{article}{
   author={Bruin, Nils},
   author={Stoll, Michael},
   title={The Mordell-Weil sieve: proving non-existence of rational points
   on curves},
   journal={LMS J. Comput. Math.},
   volume={13},
   date={2010},
   pages={272--306},
   doi={10.1112/S1461157009000187},
}

\bib{Cesnavicius}{article}{
   author={\v{C}esnavi\v{c}ius, K\polhk{e}stutis},
   title={Poitou-Tate without restrictions on the order},
   journal={Math. Res. Lett.},
   volume={22},
   date={2015},
   number={6},
   pages={1621--1666},
   issn={1073-2780},
   doi={10.4310/MRL.2015.v22.n6.a5},
}

\bib{Chabauty}{article}{
   author={Chabauty, Claude},
   title={Sur les points rationnels des courbes alg\'{e}briques de genre
   sup\'{e}rieur \`a l'unit\'{e}},
   language={French},
   journal={C. R. Acad. Sci. Paris},
   volume={212},
   date={1941},
   pages={882--885},
   issn={0001-4036},
}

\bib{ColliotTheleneSansuc}{article}{
   author={Colliot-Th\'{e}l\`ene, Jean-Louis},
   author={Sansuc, Jean-Jacques},
   title={La descente sur les vari\'{e}t\'{e}s rationnelles. II},
   language={French},
   journal={Duke Math. J.},
   volume={54},
   date={1987},
   number={2},
   pages={375--492},
   issn={0012-7094},
   doi={10.1215/S0012-7094-87-05420-2},
}

\bib{CreutzGW}{article}{
   author={Creutz, Brendan},
   title={A Grunwald-Wang type theorem for abelian varieties},
   journal={Acta Arith.},
   volume={154},
   date={2012},
   number={4},
   pages={353--370},
   issn={0065-1036},
   doi={10.4064/aa154-4-2},
}

\bib{CreutzANTSX}{article}{
   author={Creutz, Brendan},
   title={Explicit descent in the Picard group of a cyclic cover of the projective line},
   book={
     title={Algorithmic number theory: Proceedings of the 10th Biennial International Symposium (ANTS-X) held in San Diego, July 9--13, 2012},
     series={Open Book Series},
     volume={1},
     publisher={Mathematical Science Publishers},
     editor={Everett W. Howe},	
     editor={Kiran S. Kedlaya}			    
   },
  date={2013},
  pages={295--315}
}

\bib{GeneralizedJacs}{article}{
   author={Creutz, Brendan},
   title={Generalized Jacobians and explicit descents},
   journal={Math. Comp.},
   volume={89},
   date={2020},
   number={323},
   pages={1365--1394},
   issn={0025-5718},
   doi={10.1090/mcom/3491},
}

\bib{CreutzViray}{article}{
   author={Creutz, Brendan},
   author={Viray, Bianca},
   title={Two torsion in the Brauer group of a hyperelliptic curve},
   journal={Manuscripta Math.},
   volume={147},
   date={2015},
   number={1-2},
   pages={139--167},
   issn={0025-2611},
   doi={10.1007/s00229-014-0721-7},
}
\bib{Flynn}{article}{
   author={Flynn, E. V.},
   title={The Hasse principle and the Brauer-Manin obstruction for curves},
   journal={Manuscripta Math.},
   volume={115},
   date={2004},
   number={4},
   pages={437--466},
   issn={0025-2611},
   doi={10.1007/s00229-004-0502-9},
}

\bib{GS-csa}{book}{
   author={Gille, Philippe},
   author={Szamuely, Tam{\'a}s},
   title={Central simple algebras and Galois cohomology},
   series={Cambridge Studies in Advanced Mathematics},
   volume={101},
   publisher={Cambridge University Press, Cambridge},
   date={2006},
   pages={xii+343},
   isbn={978-0-521-86103-8},
   isbn={0-521-86103-9},
   doi={10.1017/CBO9780511607219},
}

\bib{ADT}{book}{
   author={Milne, J. S.},
   title={Arithmetic duality theorems},
   edition={2},
   publisher={BookSurge, LLC, Charleston, SC},
   date={2006},
   pages={viii+339},
   isbn={1-4196-4274-X},
}

\bib{Poonen}{article}{
   author={Poonen, Bjorn},
   title={Heuristics for the Brauer-Manin obstruction for curves},
   journal={Experiment. Math.},
   volume={15},
   date={2006},
   number={4},
   pages={415--420},
   issn={1058-6458},
}

\bib{PoonenRatP}{book}{
   author={Poonen, Bjorn},
   title={Rational points on varieties},
   series={Graduate Studies in Mathematics},
   volume={186},
   publisher={American Mathematical Society, Providence, RI},
   date={2017},
   pages={xv+337},
   isbn={978-1-4704-3773-2},
   doi={10.1090/gsm/186},
}

\bib{PoonenChab}{article}{
   author={Poonen, Bjorn},
   title={A $p$-adic approach to rational points on curves},
   journal={Bull. Amer. Math. Soc. (N.S.)},
   volume={58},
   date={2021},
   number={1},
   pages={45--56},
   issn={0273-0979},
   doi={10.1090/bull/1707},
}

\bib{PoonenSchaefer}{article}{
   author={Poonen, Bjorn},
   author={Schaefer, Edward F.},
   title={Explicit descent for Jacobians of cyclic covers of the projective
   line},
   journal={J. Reine Angew. Math.},
   volume={488},
   date={1997},
   pages={141--188},
   issn={0075-4102},
}

\bib{Scharashkin}{book}{
   author={Scharaschkin, Victor},
   title={Local-global problems and the Brauer-Manin obstruction},
   note={Thesis (Ph.D.)--University of Michigan},
   publisher={ProQuest LLC, Ann Arbor, MI},
   date={1999},
   pages={59},
   isbn={978-0599-63464-0},
}

\bib{TorsorsAndRationalPoints}{book}{
   author={Skorobogatov, Alexei},
   title={Torsors and rational points},
   series={Cambridge Tracts in Mathematics},
   volume={144},
   publisher={Cambridge University Press, Cambridge},
   date={2001},
   pages={viii+187},
   isbn={0-521-80237-7},
   doi={10.1017/CBO9780511549588},
}

\bib{Stoll}{article}{
   author={Stoll, Michael},
   title={Finite descent obstructions and rational points on curves},
   journal={Algebra Number Theory},
   volume={1},
   date={2007},
   number={4},
   pages={349--391},
   issn={1937-0652},
   doi={10.2140/ant.2007.1.349},
}

			\end{biblist}
	\end{bibdiv}

\end{document}